\newcommand{\renewtheorem}[1]{%
  \expandafter\let\csname #1\endcsname\relax
  \expandafter\let\csname c@#1\endcsname\relax
  \expandafter\let\csname end#1\endcsname\relax
  \newtheorem{#1}%
}
\theoremstyle{plain}
\newtheorem{alphatheorem}{Theorem}
\declaretheoremstyle[spaceabove = 3pt, spacebelow = 3pt, bodyfont = \itshape]{mystyle}
\declaretheoremstyle[spaceabove = 3pt, spacebelow = 3pt, bodyfont = \normalfont]{remark}
\declaretheorem[numberwithin = section, style = mystyle]{theorem}
\declaretheorem[sibling = theorem, style = mystyle]{proposition}
\declaretheorem[sibling = theorem, style = mystyle]{lemma}
\declaretheorem[sibling = theorem, style = mystyle]{corollary}
\declaretheorem[sibling = theorem, style = remark]{definition}
\declaretheorem[sibling = theorem, style = remark]{remark}
\newcommand{\tH}{\ensuremath{\mathrm{H}}}
\newcommand{\sfA}{\mathsf{A}}\newcommand{\sfB}{\mathsf{B}}\newcommand{\sfb}{\mathsf{b}}\newcommand{\sfC}{\mathsf{C}}\newcommand{\sfD}{\mathsf{D}}\newcommand{\sfO}{\mathsf{O}}\newcommand{\sfS}{\mathsf{S}}\newcommand{\sfT}{\mathsf{T}}
\newcommand{\bbN}{\mathbb{N}}\newcommand{\bbP}{\mathbb{P}}\newcommand{\bbR}{\mathbb{R}}\newcommand{\bbS}{\mathbb{S}}\newcommand{\bbZ}{\mathbb{Z}}
\newcommand{\mff}{\mathfrak{f}}\newcommand{\mfg}{\mathfrak{g}}\newcommand{\mfh}{\mathfrak{h}}\newcommand{\mfi}{\mathfrak{i}}\newcommand{\mfj}{\mathfrak{j}}\newcommand{\mfm}{\mathfrak{m}}\newcommand{\mfp}{\mathfrak{p}}
\newcommand{\calE}{\mathcal{E}}\newcommand{\calF}{\mathcal{F}}\newcommand{\calH}{\mathcal{H}}\newcommand{\calL}{\mathcal{L}}\newcommand{\calO}{\mathcal{O}}\newcommand{\calX}{\mathcal{X}}
\newcommand{\bfr}{\mathbf{r}}
\newcommand{\bfL}{\mathbf{L}}
\newcommand{\bfR}{\mathbf{R}}
\newcommand{\alg}{\ensuremath{\mathrm{alg}}}
\newcommand{\coh}{\mathsf{coh}}
\newcommand{\compact}{\ensuremath{\mathrm{c}}}
\newcommand{\Coker}{\ensuremath{\mathrm{Coker}}}
\newcommand{\bd}{\mathsf{D}^{\mathsf{b}}}
\DeclareMathOperator\gldim{\ensuremath{\mathrm{gldim}}}
\newcommand{\End}{\ensuremath{\mathrm{End}}}
\newcommand{\Ext}{\ensuremath{\mathrm{Ext}}}
\newcommand\field{\mathbf{k}}
\DeclareMathOperator{\hocolim}{\mathrm{hocolim}}
\newcommand{\Hom}{\ensuremath{\mathrm{Hom}}}
\renewcommand{\Im}{\ensuremath{\mathrm{Im}}}
\newcommand{\intend}{\mathcal{E}\kern -.5pt nd}
\newcommand{\intext}{{\mathcal{E}\kern -.5pt xt}}
\newcommand{\inthom}{\mathcal{H}\kern -.5pt om}
\newcommand{\Mat}{\ensuremath{\mathrm{Mat}}}
\DeclareMathOperator{\modules}{\mathsf{mod}}
\newcommand{\ncSch}{\mathsf{\mathbf{ncSch}}}
\newcommand{\order}[1]{\mathscr{#1}}
\newcommand{\perf}{\mathsf{perf}}
\newcommand{\pr}{\ensuremath{\mathrm{pr}}}
\newcommand{\QCoh}{\mathsf{QCoh}}
\DeclareMathOperator{\rad}{\ensuremath{\mathrm{rad}}}
\DeclareMathOperator{\rk}{\ensuremath{\mathrm{rk}}}
\newcommand{\relspec}[2]{\underline{\smash{\mathrm{Spec}}}_{#1}\!\left(#2\right)}
\DeclareMathOperator{\Spec}{Spec}
\newcommand\stack[1]{\mathcal{#1}}
\newcommand{\sbm}[1]{{\let\amp=&\left(\begin{smallmatrix}#1\end{smallmatrix}\right)}}
\title{\Large\textbf{Categorical absorption for hereditary orders}}
\author{Thilo Baumann}
\date{\today}
\begin{document}
\maketitle

\begin{abstract}
We show that Kuznetsov--Shinder's notion of deformation absorption of singularities
leads to a new approach for studying the bounded derived category of a hereditary order on a curve.
The starting point is a hereditary order which can be interpreted as a smoothing 
of the finite-dimensional algebra obtained from the restriction to a ramified point.
We construct a triangulated subcategory inside the derived category of this finite-dimensional algebra
which provides a deformation absorption of singularities.
This allows us to obtain a semiorthogonal decomposition of the bounded derived category of the hereditary order,
which is in addition linear over the base.
\end{abstract}

\tableofcontents
\section{Introduction}
Hereditary orders over a curve form a well-studied class of sheaves of (noncommutative) algebras.
Their classification up to étale-local isomorphism \cite[Theorem 39.14]{MR393100},
and up to Morita equivalence \cite[Proposition 7.7]{MR3695056} is well-established.

From the geometric point of view, \cite[Corollary 7.8]{MR2018958} provides a dictionary between 
\begin{enumerate}[label = \alph*)]
    \item hereditary orders~$\order{A}$ on a smooth curve~$C$, and
    \item smooth root stacks~$\stack{C}$ with coarse moduli space~$C$,
\end{enumerate}
in the sense
that there is
an equivalence~$\coh(C,\order{A}) \simeq \coh(\stack{C})$
of abelian categories.
The dictionary identifies ramification points of
the order~$\order{A}$
with the points with non-trivial stabilizer of
the root stack~$\stack{C}$.

The bounded derived category~$\bd(C,\order{A}) \simeq \bd(\stack{C})$ has been examined
on both sides of the dictionary
with various objectives in mind.
In particular, when trying to decompose these categories,
one has the following results if~$C$ is projective.
\begin{itemize}
    \item If~$C = \bbP^1$, the stacky curve~$\stack{C}$ is a weighted projective line,
    and~$\bd(\stack{C})$ admits a tilting bundle \cite[Proposition 4.1]{MR915180}.
    This was recently generalized to a certain class of hereditary orders
    over non-algebraically closed fields in \cite[Theorem 3.12]{2411.06222}.
    \item Among other things, \cite[Theorem 1.2]{MR3436544}
    construct a semiorthogonal decomposition of~$\bd(\stack{C})$.
\end{itemize}

We propose a novel approach,
using the framework of Kuznetsov--Shinder \cite{MR4698898}
to decompose~$\bd(C,\order{A})$,
by viewing~$(C,\order{A})$ as a family of finite-dimensional algebras over the curve~$C$.
The idea fits into the general perspective of studying the bounded derived category
of families of varieties over a base scheme, see \cite{MR4680279} for a survey.
It provides the first example of a deformation absorption of singularities \cite{MR4698898} in a noncommutative setting.

To state the first main result of the paper,
let~$C$ be a smooth curve over an algebraically closed field~$\field$ of characteristic zero.
Consider a hereditary~$\calO_C$-order~$\order{A}$
with ramification locus~$\Delta_\order{A} = \{o\}$
and ramification index~$r\in \bbZ_{\ge 1}$.
Hence, the algebra~$\order{A}$ is Azumaya on~$C\setminus\{o\}$.
The algebra~$\order{A}(o) \colonequals \order{A}\otimes_C\Spec\field(o)$ is described in \Cref{lem:morita-equivalence-to-cyclic-algebra}.
The first main result is the existence of a triangulated subcategory
in~$\bd(\order{A}(o))$ which absorbs singularities.
\begin{alphatheorem}[\Cref{thm:absorption-of-singularities}]\label{thm:absorption-of-singularities-intro}
    The sequence~$(S_1,\ldots, S_{r-1})$ of simple~$\order{A}(o)$-modules is semiorthogonal in~$\bd(\order{A}(o))$ and 
    absorbs singularities, i.e.~the triangulated subcategory
    \begin{equation}\label{eq:sequence-of-p-infty-objects-introdcution}
        \sfS = \langle S_1,\ldots, S_{r-1}\rangle \subset \bd(\order{A}(o))
    \end{equation}
    is admissible and both of its complements~${}^\perp \sfS$ and~$\sfS^\perp$ are smooth and proper.
\end{alphatheorem}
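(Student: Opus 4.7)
The plan is to reduce to an explicit computation inside the cyclic Nakayama algebra that describes $\order{A}(o)$ via \Cref{lem:morita-equivalence-to-cyclic-algebra}. This algebra is self-injective with $r$ simple modules and Loewy length $r$; the projective cover $P_i \twoheadrightarrow S_i$ has kernel $\rad P_i$ uniserial of length $r-1$ and top $S_{\sigma(i)}$, where $\sigma(i) \equiv i+1 \pmod{r}$ is the cyclic permutation induced by the quiver orientation. A short calculation of socles shows $\Omega^2 S_i \cong S_i$, so the minimal projective resolution of $S_i$ is two-periodic, alternating between $P_i$ and $P_{\sigma(i)}$. Applying $\Hom(-, S_j)$ and using $\Hom(P_k, S_j) = \delta_{kj}\field$ then yields
\[
\Ext^n(S_i, S_j) = \begin{cases} \field & n \text{ even and } j \equiv i, \\ \field & n \text{ odd and } j \equiv i+1, \\ 0 & \text{otherwise,} \end{cases}
\]
with all indices taken modulo~$r$.

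The semi-orthogonality of $(S_1, \ldots, S_{r-1})$ follows immediately: a nonzero $\Ext^\bullet(S_a, S_b)$ forces $b \equiv a$ or $b \equiv a+1 \pmod{r}$, and neither congruence has a solution with $1 \le b < a \le r-1$. Moreover, $\Ext^\bullet(S_i, S_i) \cong \field[u]$ with $\deg u = 2$, so each $S_i$ is a $\tP^\infty$-object in the sense of \cite{MR4698898}. The admissibility of $\sfS$ and its absorption of singularities then follow from applying the Kuznetsov--Shinder formalism to this semi-orthogonal sequence of $\tP^\infty$-objects.

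To identify the two complements, I would exhibit explicit generators. Because $P_r$ is projective and satisfies $\Hom(P_r, S_i) = 0$ for $i \neq r$, it lies in ${}^\perp\sfS$. The missing simple $S_r$ sits in the triangle $\rad P_r \to P_r \to S_r$ in which $\rad P_r$, having composition factors $S_1, \ldots, S_{r-1}$, belongs to $\sfS$; this produces the semi-orthogonal decomposition $\bd(\order{A}(o)) = \langle \sfS, \langle P_r\rangle\rangle$, forcing ${}^\perp\sfS = \langle P_r\rangle$. Since $\End(P_r) = \field$, one concludes ${}^\perp\sfS \simeq \perf(\field)$, which is smooth and proper. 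The same argument, dualised via the Nakayama permutation, shows $\sfS^\perp \simeq \perf(\field)$, generated by the injective envelope $I_r$ of $S_r$ (an indecomposable projective with $\End = \field$).

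The principal obstacle is reconciling the abstract Kuznetsov--Shinder machinery with this noncommutative setting: one must verify that the $\tP^\infty$-objects satisfy the technical hypotheses of their absorption theorem, and check that the admissible subcategory it produces coincides with the explicit subcategory $\langle P_r\rangle$, so that the decomposition is concrete enough to propagate across the family $(C, \order{A}) \to C$ in subsequent sections.
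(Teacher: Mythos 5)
Your proposal is correct and follows essentially the same route as the paper: the $2$-periodic minimal projective resolutions give $\Ext^\bullet_{\Lambda_r}(S_i,S_j)$, hence the semiorthogonal sequence of $\bbP^{\infty,2}$-objects, and the two explicit decompositions $\bd(\Lambda_r)=\langle \sfS, P_r\rangle$ and $\bd(\Lambda_r)=\langle I_r,\sfS\rangle$ (the paper's \Cref{lem:sod-ramified-fiber}, proved there by a slightly different generation argument via the cones $M_i=(P_{i+1}\to P_i)$ rather than your filtration of $\rad P_r$) identify both complements as generated by an exceptional object. Note that your appeal to the Kuznetsov--Shinder absorption formalism is not needed for this statement: the two explicit semiorthogonal decompositions already yield admissibility of $\sfS$ and the smoothness and properness of ${}^\perp\sfS$ and $\sfS^\perp$, which is exactly how the paper concludes.
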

More precisely, we show in \Cref{lem:p-infty-2-objects} that
the sequence \eqref{eq:sequence-of-p-infty-objects-introdcution}
forms a semiorthogonal sequence of~$\bbP^{\infty,2}$-objects.
The proof uses the representation theory of finite-dimensional algebras.
Appealing to a noncommutative version of \cite[Theorem 1.8]{MR4698898}
we use~$\sfS$ to provide a semiorthogonal decomposition of~$\bd(C,\order{A})$.
\begin{alphatheorem}[\Cref{thm:deformation-absorption}]\label{thm:deformation-absorption-intro}
    Let~$\mfi_o\colon (\Spec \field(o),\order{A}(o))\to (C,\order{A})$ be the inclusion of~$o\in C$.
    There is a strong~$C$-linear semiorthogonal decomposition
    \begin{equation}\label{eq:main-sod-theoremA}
        \bd(C,\mathscr{A}) =
        \langle \mfi_{o,*}S_{1},\ldots,  \mfi_{o,*}S_{r-1}, \sfD\rangle,
    \end{equation}
    such that
    \begin{enumerate}[label = \roman*)]
        \item the sequence~$(\mfi_{o,*}S_{1},\ldots,  \mfi_{o,*}S_{r-1})$ is exceptional,
        \item the admissible subcategory~$\sfD$ is smooth and proper over~$\bd(C)$.
    \end{enumerate}
\end{alphatheorem}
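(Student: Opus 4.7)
The plan is to realize \Cref{thm:deformation-absorption-intro} as an instance of a noncommutative version of the Kuznetsov--Shinder deformation absorption principle \cite[Theorem 1.8]{MR4698898}. The pair $(C, \order{A})$ is a flat family of finite-dimensional algebras over the smooth base $C$, Azumaya away from the single non-Azumaya point $o$; the subcategory $\sfS \subset \bd(\order{A}(o))$ furnished by \Cref{thm:absorption-of-singularities-intro} is the absorbing structure on the special fibre. The main work is to verify that its image under the pushforward $\mfi_{o,*}$ produces an exceptional admissible sequence in $\bd(C, \order{A})$ whose orthogonal is relatively smooth and proper over $\bd(C)$.

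The crucial calculation is the transformation of $\bbP^{\infty,2}$-objects into exceptional objects under pushforward along the codimension-one embedding $\mfi_o$. Combining the Koszul triangle $\mfi_o^* \mfi_{o,*} S_j \simeq S_j \oplus S_j \otimes N_{o/C}^\vee[1]$ with the formula $\mfi_o^! \simeq \mfi_o^*(-) \otimes N_{o/C}[-1]$, adjunction yields a long exact sequence
\[
\cdots \to \Ext^m_{\order{A}(o)}(S_i, S_j) \to \Ext^m_{(C, \order{A})}(\mfi_{o,*} S_i, \mfi_{o,*} S_j) \to \Ext^{m-1}_{\order{A}(o)}(S_i, S_j) \xrightarrow{\partial} \Ext^{m+1}_{\order{A}(o)}(S_i, S_j) \to \cdots
\]
in which the connecting map $\partial$ is, up to a nonzero scalar, multiplication by the class of the deformation, a nonzero element of $\Ext^2(S_i, S_i)$. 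For $i > j$ the semiorthogonality of $\sfS$ makes both outer terms vanish, so the middle does too. For $i = j$ the $\bbP^{\infty,2}$-property gives $\End^*(S_i) = \field[t]$ with $|t| = 2$, and $\partial$ becomes multiplication by $t$: injective, with cokernel $\field$ concentrated in degree zero, so $\End^*(\mfi_{o,*} S_i) = \field$. This is the precise mechanism by which the $\bbP^\infty$-structure on the fibre is cancelled by the deformation direction along $C$ to produce exceptionality in the total space.

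With $(\mfi_{o,*} S_1, \ldots, \mfi_{o,*} S_{r-1})$ now an exceptional sequence---admissibility of the generated subcategory following from admissibility of $\sfS$ via the adjunctions $\mfi_o^* \dashv \mfi_{o,*} \dashv \mfi_o^!$---define $\sfD$ to be its right orthogonal. Strong $C$-linearity of \eqref{eq:main-sod-theoremA} is automatic: each $\mfi_{o,*} S_i$ is set-theoretically supported at $o$, so the subcategory they generate is closed under tensoring with pullbacks from $\bd(C)$, and the right orthogonal inherits this stability.

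I expect the principal obstacle to be the smoothness and properness of $\sfD$ relative to $\bd(C)$. Over $C \setminus \{o\}$ the Azumaya property makes this automatic via Morita reduction to $\bd(C \setminus \{o\})$. At $o$ one must show that removing the exceptional sequence $(\mfi_{o,*} S_i)$ repairs precisely the failure of relative smoothness caused by the $\bbP^\infty$-singularity of the fibre category, with no additional obstruction remaining. This is the core content of the noncommutative deformation absorption theorem, whose application requires a careful dg-categorical base change along $\bd(\field(o)) \to \bd(C)$ together with dualizability checks for the relevant families of $\order{A}$-modules.
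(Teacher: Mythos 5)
Your first step (exceptionality of the pushed-forward simples) is essentially the paper's own argument in \Cref{lem:pushforward-of-p-infty-2-objects}, but two details need repair. The triangle $S_j[1]\to \bfL\mfi_o^*\mfi_{o,*}S_j\to S_j\xrightarrow{\theta} S_j[2]$ does \emph{not} split, and its non-splitness is precisely the mechanism: writing $\mfi_o^*\mfi_{o,*}S_j\simeq S_j\oplus S_j\otimes N^\vee[1]$ would force $\End^\bullet(\mfi_{o,*}S_j)$ to contain two copies of $\field[\theta]$ and destroy exceptionality. Moreover, the assertion that the connecting map is a \emph{nonzero} multiple of $\theta$ is exactly what has to be proved; in the paper it follows because $\order{A}$ is hereditary, so $\bd(C,\order{A})=\sfD^\perf(C,\order{A})$ (\Cref{lem:D-perf-globally}), hence $\bfL\mfi_o^*\mfi_{o,*}S_j$ is perfect over $\Lambda_r$, while $S_j\oplus S_j[1]$ is not. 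Without this justification the "cancellation of the $\bbP^{\infty,2}$-structure by the deformation direction" is an assertion, not an argument.

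The genuine gap is in part (ii). You define $\sfD$ as the right orthogonal and then defer its smoothness and properness over $\bd(C)$ (and, in effect, the strongness of the decomposition) to "the noncommutative deformation absorption theorem", i.e.\ a noncommutative analogue of \cite[Theorem 1.5]{MR4698898} to be established by dg-categorical base change and dualizability checks. No such theorem is available here, and the paper does not prove one; indeed its point is that this is the first instance of the phenomenon in a noncommutative setting, and \Cref{thm:main-base-change-theorem} is explicitly not used for this. Instead the paper closes the step concretely in \Cref{lem:embedding-the-curve}: by \Cref{prop:classification-of-overorders} there is a unique maximal overorder $\order{B}_i$ of $\order{A}$ purely of type $i$ at $o$; it is Azumaya, so over the algebraically closed $\field$ one has $\bd(C,\order{B}_i)\simeq\bd(C)$, and the exact fully faithful pushforward $\mfj_{\order{B}_i,*}\colon\bd(C,\order{B}_i)\to\bd(C,\order{A})$ (\Cref{lem:properties-of-morphisms-by-maximal-overorders}) has essential image exactly $\sfD$. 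That identification is verified locally at $o$, using the semiorthogonality criterion of \Cref{lem:rel-criterion-semiorthogonality} and the fact that a bounded complex of projective $\order{A}_o$-modules orthogonal to $S_1,\dots,S_{r-1}$ is built from the single indecomposable projective $L_o^{(i)}$; admissibility of $\sfD$ then comes from the adjunction $\mfj_{\order{B}_i}^*\dashv\mfj_{\order{B}_i,*}$, and smoothness and properness over $\bd(C)$ are immediate from $\sfD\simeq\bd(C)$. Some argument playing this role (an explicit identification of $\sfD$, or an actual proof of the relative smoothness/properness) is missing from your proposal, so statement (ii) is not established.
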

In fact, we will show in \Cref{lem:embedding-the-curve} that~$\sfD$ is equivalent to~$\bd(C)$.

The existence of a semiorthogonal decomposition \eqref{eq:main-sod-theoremA}
can also be deduced in an indirect way
from the stacks--orders dictionary \cite[Corollary 7.8]{MR2018958}
and the semiorthogonal decomposition of \cite[Theorem 4.7]{MR3573964} for root stacks.
We explain in \Cref{sec:dictionary-orders-stacks} how our results translate to the stacky side in \cite[Theorem 4.7]{MR3573964}.

\paragraph{Categorical absorption and deformation absorption of singularities.}
We outline the idea of categorical absorption and
deformation absorption of singularities from \cite{MR4698898}.

Given a flat family~$\calX\to (C,o)$ over a smooth pointed curve
with a single singular fiber~$\calX_o = \calX\times_C\Spec\field(o)$
and smooth total space~$\calX$,
there is sometimes an interplay
of a certain semiorthogonal decomposition~$\bd(\calX_o) = \langle \sfS,{}^{\perp}\sfS\rangle$ of the fiber
and a~$C$-linear semiorthogonal decomposition~$\bd(\calX) = \langle \sfA_{\sfS},\sfB\rangle$ of the total space
in the following sense.
\begin{enumerate}[label = \alph*)]
    \item As in \Cref{thm:absorption-of-singularities-intro},
    the category~$\sfS\subset \bd(\calX_o)$ is admissible and
    both of its complements~${}^\perp \sfS$ and~$\sfS^\perp$ are smooth and proper.
    In this case \cite[Definition 1.1]{MR4698898} says that~$\sfS$ \emph{absorbs singularities of }$\calX_o$.
    \item The category~$\sfA_\sfS$ is the (thick) closure of the pushforward of~$\sfS$ to~$\bd(\calX)$.
    If it is admissible in~$\bd(\calX)$, \cite[Theorem 1.5]{MR4698898} show
    that its complement~$\sfB$ is~$C$-linear, as well as smooth and proper such that
    \begin{equation}\label{eq:fibers-of-complement}
        \sfB_p \simeq \begin{cases}
            {}^\perp \sfS &\text{if }p = o,\\
            \bd(\calX_p) &\text{otherwise}.
        \end{cases}
    \end{equation} 
\end{enumerate}
If additionally, (b) is satisfied for~$\sfS$, then Definition 1.4 of \emph{op.~cit.}
says that~$\sfS$\emph{ provides a deformation absorption of singularities of }$\calX_o$.
\Cref{thm:deformation-absorption-intro} shows that (b) holds in the noncommutative setting
for~$\sfS$ defined in \Cref{thm:absorption-of-singularities-intro}.

Finding~$\sfS \subsetneq \bd(\calX_o)$ which satisfies (a) and (b) is a non-trivial task.
Among other things, Kuznetsov--Shinder thus introduce so-called~$\bbP^{\infty,2}$-objects~$S\in \bd(\calX_o)$.
This is an object~$S$ such that its ring of self-extensions is~$\Ext_{\calX_o}^{\bullet}(S,S) \cong \field[\theta]$ with~$\deg \theta = 2$,
see \Cref{def:p-infty-object}.
By \cite[Theorem 1.8]{MR4698898} a semiorthogonal collection of~$\bbP^{\infty,2}$-objects in~$\bd(\calX_o)$
forms a triangulated subcategory satisfying (a) and (b).
Notably, (b) is satisfied independently of the chosen smoothing~$\calX$ of the singular fiber~$\calX_o$.

Examples of this phenomenon are odd-dimensional varieties with isolated nodal singularities \cite[Theorem 6.1]{MR4698898}.
Generalizing to the notion of compound~$\bbP^{\infty}$-objects, \cite{2402.18513} observed a similar phenomenon
for a projective threefold with non-isolated singularity.
An application of categorical absorption in the context of mirror symmetry is given by \cite{2412.09724}.
Interpreting a hereditary order~$\order{A}$ as a flat family over its central curve~$C$,
we present an example~$(C,\order{A})\to C$ where the role of
the singular fiber is played by
the restriction~$\order{A}(o)$ of~$\order{A}$ to~$o$
in \Cref{sec:categorical-absorption}.

\paragraph{Noncommutative base change.}
In Appendix \ref{appendix:noncommutative-base-change-formula}
we will make sense of the fiber~$\sfB_p$ from \eqref{eq:fibers-of-complement}
in the derived category of a coherent ringed scheme~$(C,\order{A})$
by providing a base change formula
which follows from some modifications of the results in \cite{MR2801403}.

Let $\order{A}$ be a flat $\calO_X$-algebra.
Moreover, let~$f \colon X\to S$ be a flat morphism of schemes.
If~$h\colon T \to S$ is a morphism of schemes,
we set~$X_T = X\times_S T$ and~$\order{A}_T = h_T^*\order{A}$,
where~$h_T\colon X_T\to X$ is the induced morphism.
In \Cref{lem:noncommutative-fibre-product}, we will show
how $(X_T,\order{A}_T)$ is a fiber product.
For its bounded derived category~$\bd(X_T,\order{A}_T)$
we have the following generalization of \cite[Theorem 5.6]{MR2801403}.

\begin{alphatheorem}[\Cref{thm:main-base-change-theorem}]\label{thm:main-base-change-theorem-intro}
    Assume that 
    \begin{equation}
        \bd(X,\order{A}) = \langle\sfA_1,\ldots, \sfA_m \rangle
    \end{equation}
    is an~$S$-linear strong semiorthogonal decomposition
    such that the projection functors have finite cohomological amplitude.
    If~$\order{A}$ has finite global dimension,
    then there is a~$T$-linear semiorthogonal decomposition
    \begin{equation}
        \bd(X_T,\order{A}_T) = \langle \sfA_{1,T},\ldots, \sfA_{m,T}\rangle
    \end{equation}
    compatible with pullback and pushforward.
\end{alphatheorem}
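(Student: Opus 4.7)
The plan is to follow the strategy of \cite[Theorem 5.6]{MR2801403} and to check that each step carries over once~$X$ is replaced by the coherent ringed scheme~$(X,\order{A})$. The first step is to set up the base change machinery. Using \Cref{lem:noncommutative-fibre-product}, the square~$(X_T,\order{A}_T)\xrightarrow{h_T}(X,\order{A})$ over~$T\xrightarrow{h} S$ is a fiber product of coherent ringed schemes, so one obtains a derived pullback~$\mathrm{L}h_T^*$ and a pushforward~$\mathrm{R}h_{T,*}$ between the bounded derived categories. Since~$\order{A}$ is flat over~$\calO_X$, the classical flat base change~$\mathrm{L}h^*\mathrm{R}f_* \simeq \mathrm{R}f_{T,*}\mathrm{L}h_T^*$ and the projection formula extend from quasi-coherent sheaves to~$\order{A}$-modules by forgetting the~$\order{A}$-action and retensoring.

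The second step is to define the components~$\sfA_{i,T}\subset \bd(X_T,\order{A}_T)$. Following \emph{op.~cit.}, one sets~$\sfA_{i,T}$ to be the essential image of a base-changed projection functor~$\gamma_{i,T}$ associated with the original projection~$\gamma_i \colon \bd(X,\order{A})\to \sfA_i$ through the base change square. The hypothesis that~$\gamma_i$ has finite cohomological amplitude, combined with the finite global dimension of~$\order{A}$, is what ensures that~$\gamma_{i,T}$ is a well-defined endofunctor of~$\bd(X_T,\order{A}_T)$, so that~$\sfA_{i,T}$ is a genuine triangulated subcategory rather than an unbounded object.

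In the third step, semiorthogonality, $T$-linearity, and generation are all deduced from the corresponding properties of the original decomposition via base change. For~$F\in \sfA_j$ and~$G\in \sfA_i$ with~$i<j$, the $S$-linearity provides~$\mathrm{R}f_*\inthom_{\order{A}}(F,G) = 0$, and flat base change transports this vanishing from~$\bd(S)$ to~$\bd(T)$, yielding both semiorthogonality and $T$-linearity of the~$\sfA_{i,T}$. Generation is obtained by applying~$\mathrm{L}h_T^*$ to the iterated triangles furnished by the original projections~$\gamma_i$ and arguing essential surjectivity of pullback in a Zariski-local sense on~$T$, combined with the fact that the base-changed projections~$\gamma_{i,T}$ provide candidates for the decomposition triangles on~$\bd(X_T,\order{A}_T)$.

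The main obstacle is the careful bookkeeping of cohomological amplitudes in the noncommutative setting: one must verify that~$\mathrm{L}h_T^*$, $\mathrm{R}h_{T,*}$, the internal~$\inthom$ of~$\order{A}$-modules, and the base-changed projection functors all preserve boundedness simultaneously. This is precisely where the hypotheses on finite cohomological amplitude of the~$\gamma_i$ and finite global dimension of~$\order{A}$ are indispensable, and it is these technical estimates---rather than the categorical structure of the argument---that require noncommutative analogues of the foundational results in \cite{MR2801403}.
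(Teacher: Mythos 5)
There is a genuine gap: your sketch never constructs the objects it relies on. The crux of \cite[Theorem 5.6]{MR2801403}, and of the paper's proof, is that one cannot define the base-changed components or projection functors directly on the bounded level ``through the base change square''. The actual route is a chain of intermediate categories: the strong decomposition of~$\bd(X,\order{A})$ is first restricted to~$\sfD^{\perf}(X,\order{A})$ (using that perfect equals homologically finite, which is where finite global dimension of~$\order{A}$ enters, cf.~\Cref{lem:D-perf-globally,lem:perfect-is-homologically-finite,lem:induced-sod-bounded-to-perfect}), then extended to the compactly generated unbounded category~$\sfD(X,\order{A})$ and to~$\sfD^{-}$ (\Cref{lem:induced-sod-perfect-to-unbounded,lem:induced-sod-unbounded-to-bounded-above}); base change is performed on perfect complexes, where~$\sfD^{\perf}(X_T,\order{A}_T)$ is generated, after closing under summands, by box tensors~$\bfL\mfh_T^*M\otimes^{\bfL}\bfL f_T^*\calF$ (\Cref{lem:base-change-perfect-category,prop:base-change-perfect}), then extended to~$\sfD(X_T,\order{A}_T)$ (\Cref{prop:unbounded-base-change}), and only at the very end does one descend to~$\bd(X_T,\order{A}_T)$ by intersecting with the unbounded components, using finite cohomological amplitude together with the approximation statements (\Cref{lem:eventually-bounded-zero-sufficient} and the final lemma) to see that the induced projections preserve boundedness. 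Your definition of~$\sfA_{i,T}$ as ``the essential image of a base-changed projection functor~$\gamma_{i,T}$'' presupposes exactly the functor whose existence is the content of the theorem; there is no direct way to base change a projection functor without this detour through compact generation.

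Your generation argument also fails as stated: $\bfL\mfh_T^*$ is not essentially surjective onto~$\bd(X_T,\order{A}_T)$, not even Zariski-locally, and indeed it need not even land in the bounded category (the paper only asserts compatibility with~$\bfL\mfh_T^*\colon \bd(X,\order{A})\to \sfD^{-}(X_T,\order{A}_T)$), so pulling back the decomposition triangles of objects of~$\bd(X,\order{A})$ cannot generate, nor even decompose, an arbitrary bounded complex on~$(X_T,\order{A}_T)$. The correct generation statement involves tensoring pullbacks with~$\sfD^{\perf}(T)$, taking summands, and then a homotopy-colimit/approximation argument to pass from perfect to bounded coherent objects. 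Your semiorthogonality step via~$\bfR f_*\bfR\inthom_{\order{A}}(-,-)$ (\Cref{lem:rel-criterion-semiorthogonality}) is the right ingredient, but it only applies once the components~$\sfA_{i,T}$ have been correctly constructed as above. As it stands, the proposal reproduces the headline of Kuznetsov's strategy while omitting the machinery that makes it work in both the commutative and the noncommutative setting.
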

It should be mentioned that \Cref{thm:absorption-of-singularities-intro} and \Cref{thm:deformation-absorption-intro}
are independent from the base change formula in \Cref{thm:main-base-change-theorem-intro}.

\paragraph{Organization of the paper.}
We recall properties of coherent ringed schemes
in \Cref{sec:noncommutative-schemes}.
We then apply this point of view to hereditary orders in \Cref{sec:hereditary-orders}.
In \Cref{sec:categorical-absorption}, we prove \Cref{thm:absorption-of-singularities-intro} and \Cref{thm:deformation-absorption-intro}.
Moreover, we show in \Cref{sec:dictionary-orders-stacks} how our results relate to smooth stacky curves
using the dictionary \cite{MR2018958}.
In the appendix,
we explain how to obtain \Cref{thm:main-base-change-theorem-intro}
from its commutative version \cite{MR2801403}.

\paragraph{Notations and conventions.} Throughout the paper~$\field$ denotes an algebraically closed field of characteristic zero.
Every scheme is assumed to be integral and quasi-projective over~$\field$.
If~$X$ is a scheme, we denote by~$\field(X)$ its function field.

\paragraph{Acknowledgements.}
The author was partially supported by the Luxembourg National Research Fund (PRIDE R-AGR-3528-16-Z).
We are grateful to Pieter Belmans for many helpful discussions and for his feedback on drafts of this paper.

\section{Coherent ringed schemes}\label{sec:noncommutative-schemes}
In this section we define coherent ringed schemes~$(X,\order{A})$
following the terminology of \cite{MR2244264}.
Sometimes they are also referred to as `mild noncommutative schemes', see \cite{2410.01785}.
Their derived category~$\sfD(X,\order{A})$ (and bounded versions of it) were for example studied in
\cite[Appendix D]{MR2238172}, \cite{MR3695056}, \cite[Appendix A]{MR4554471}, \cite{2408.04561,2410.01785}.
\begin{definition}\label{def:noncommutative-schemes}
A \emph{coherent ringed scheme} is
a pair~$(X,\order{A})$ of
a scheme~$X$ over~$\field$ and 
a coherent sheaf of~$\calO_X$-algebras~$\order{A}$.

A \emph{morphism of coherent ringed schemes}~$\mff = (f,f_\alg) \colon (X,\order{A})\to (Y,\order{B})$
consists of 
\begin{itemize}
	\item a morphism of schemes~$f\colon (X,\calO_X)\to (Y,\calO_Y)$.
	\item an~$\calO_X$-algebra morphism~$f_\alg\colon f^*\order{B}\to \order{A}$.
\end{itemize}
\end{definition}

An $\calO_X$-order $\order{A}$,
as defined in \Cref{sec:hereditary-orders},
is a coherent ringed scheme $(X,\order{A})$.
\Cref{lem:morita-equivalence-to-cyclic-algebra} will show that
it is necessary for us to work need in the more
general framework of coherent ringed schemes.

Given~$\mff = (f, f_\alg)\colon (X,\order{A})\to (Y,\order{B})$ and~$\mfg = (g, g_\alg)\colon (Y,\order{B})\to (Z,\order{C})$
we define their \emph{composition}~$\mfh = (h,h_\alg) =\mfg \circ \mff \colon (X,\order{A})\to (Z,\order{C})$ as follows:
\begin{itemize}
    \item on the underlying schemes~$h \colonequals g \circ f$, and
    \item on algebras~$h_\alg \colonequals f_\alg \circ f^*(g_\alg)\colon h^*\order{C}\to f^*\order{B}\to \order{A}$.
\end{itemize}
This allows us to form the category of coherent ringed schemes~$\ncSch_\field$.
Note that every scheme~$X$ is an object in~$\ncSch_\field$ by setting~$\order{A} = \calO_X$.

Following \cite[Definition 10.3]{MR2238172},
we call a morphism~$\mff\colon (X,\order{A})\to (Y,\order{B})$ \emph{strict}
if~$f_\alg = \text{id}_{f^*\order{B}}$.
A morphism~$\mff$ is called an \emph{extension}
if~$X = Y$ and~$f$ is the identity.

\begin{remark}\label{rem:finite-noncommutative-covering}
    Every coherent ringed scheme~$(X,\order{A})$ comes with a \emph{structure morphism}~$\mff\colon (X,\order{A})\to X$.
    This is an extension, where~$f_\alg\colon \calO_X\to \order{A}$ induces the~$\calO_X$-algebra structure on~$\order{A}$.
    Since~$\order{A}$ is a coherent~$\calO_X$-algebra,
    we can view the structure morphism as a \emph{finite noncommutative covering}
    generalizing the equivalence~$\coh(\relspec{X}{\order{A}})\cong \coh(X,\order{A})$
    for commutative~$\order{A}$
    by allowing noncommutative finite-dimensional~$\field$-algebras as fibers, see \cite{MR2356702}.
\end{remark}
\paragraph{Modules on coherent ringed schemes.}
Given a coherent ringed scheme~$(X,\order{A})$, we denote 
\begin{itemize}
    \item by~$\QCoh(X,\order{A})$ the category of right~$\order{A}$-modules which are quasicoherent, and 
    \item by~$\coh(X,\order{A})$ the subcategory of right~$\order{A}$-modules which are coherent as~$\calO_X$-modules. 
\end{itemize}

Similarly to the commutative case,
we have the \emph{sheaf hom-functor}~$\inthom_{\order{A}}(-,-)$,
and the \emph{tensor product of~$\order{A}$-modules}~$-\otimes_\order{A}-$.
Note that in the tensor product,
the second argument needs to be a left~$\order{A}$-module.

Given a morphism~$\mff\colon (X,\order{A})\to (Y,\order{B})$
of coherent ringed schemes,
the \emph{pushforward}~$\mff_*M\colonequals f_*M$
of $M \in \QCoh(X,\order{A})$
carries an induced~$\order{B}$-module structure via 
\begin{equation}
    f_* M \otimes_Y \order{B} \cong f_* (M \otimes_X f^*\order{B}) \xrightarrow{f_*(f_\alg)} f_*(M\otimes \order{A})\xrightarrow{f_*(\mu_{M})} f_*M,
\end{equation}
where~$\mu_M\colon M\otimes_X \order{A}\to M$ is the~$\order{A}$-module structure of~$M$.
The \emph{pullback} of $N\in \QCoh(Y,\order{B})$ along $\mff$ is
given by~$\mff^*N \colonequals f^*N\otimes_{f^*\order{B}}\order{A} \in \QCoh(X,\order{A})$.
\begin{remark}\label{rem:descending-functors-to-coherent-modules}
    Since we assume that~$\order{A}$ is coherent,
    it follows that~$\inthom_\order{A}$,~$\otimes_\order{A}$ and~$\mff^*$ map coherent modules to coherent modules.
    For the pushforward (as in the commutative case),
    one has to additionally require properness of~$f\colon X\to Y$, cf.~\cite[Lemma A.5]{MR4554471}.
\end{remark}

\paragraph{Fiber products of coherent ringed schemes.}
We extend fiber products \cite[Lemma 10.37]{MR2238172} for Azumaya varieties to 
fiber products in~$\ncSch_\field$.
Let~$\mff_1 \colon (X_1,\order{A}_1)\to (X,\order{A})$
and~$\mff_2\colon (X_2,\order{A}_2)\to (X,\order{A})$ be morphisms of coherent ringed schemes.
Moreover, denote by~$p_i\colon X_1\times_X X_2\to X_i$ the two canonical projections.
\begin{equation}\label{eq:general-fiber-product}
    \begin{tikzcd}
          (X_1\times_X X_2,p_j^*\order{A}_j)\arrow[r, "\mfp_2"] \arrow[d, "\mfp_1"]& (X_2,\order{A}_2) \arrow[d, "\mff_2"]\\
          (X_1,\order{A}_1) \arrow[r, "\mff_1"] & (X,\order{A})
    \end{tikzcd}
\end{equation}

\begin{lemma}\label{lem:noncommutative-fibre-product}
    Assume that there exists~$i\in \{1,2\}$ such that the morphism~$\mff_i$ is strict and let~$j\neq i$.
    The coherent ringed scheme~$(X_1\times_X X_2, p_j^*\order{A}_j)$ is a fibre product of~$\mff_1$ and~$\mff_2$ in~$\ncSch_\field$.
    It is unique up to unique isomorphism.
\end{lemma}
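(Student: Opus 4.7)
The plan is to mimic the usual universal-property argument for fibre products of schemes, with the strictness hypothesis absorbing all of the bookkeeping needed to make the algebra data compatible on the nose.

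Without loss of generality take $i = 2$, so $\mff_2$ is strict, meaning $\order{A}_2 = f_2^*\order{A}$ and $f_{2,\alg} = \text{id}$. Then $j=1$ and the proposed object is $(X_1\times_X X_2, \order{A}')$ with $\order{A}' \colonequals p_1^*\order{A}_1$. First I would exhibit the two projections in $\ncSch_\field$. Define $\mfp_1$ as the strict morphism whose underlying map is $p_1$ and whose algebra map is $\text{id}_{\order{A}'}$. For $\mfp_2$, use the canonical base-change identification $p_2^*f_2^*\order{A} \cong p_1^*f_1^*\order{A}$ to rewrite $p_2^*\order{A}_2 = p_2^*f_2^*\order{A}$ as $p_1^*f_1^*\order{A}$, and set $p_{2,\alg} \colonequals p_1^*(f_{1,\alg}) \colon p_1^*f_1^*\order{A} \to p_1^*\order{A}_1 = \order{A}'$.

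Next I would check that the diagram \eqref{eq:general-fiber-product} commutes in $\ncSch_\field$. On underlying schemes this is the standard equality $f_1\circ p_1 = f_2\circ p_2$. On algebras, $\mff_1\circ\mfp_1$ contributes $p_1^*(f_{1,\alg})$, and $\mff_2\circ\mfp_2$ contributes $p_{2,\alg}\circ p_2^*(f_{2,\alg}) = p_1^*(f_{1,\alg})\circ \text{id}$ after the base-change identification; so both are equal.

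For the universal property, suppose $\mfg_k \colon (Y,\order{B})\to (X_k,\order{A}_k)$ for $k=1,2$ satisfy $\mff_1\circ\mfg_1 = \mff_2\circ\mfg_2$. On the scheme level the universal property of $X_1\times_X X_2$ gives a unique $g\colon Y \to X_1\times_X X_2$ with $p_k\circ g = g_k$. To promote this to a morphism $\mfg$ in $\ncSch_\field$, note that $g^*\order{A}' = g^*p_1^*\order{A}_1 = g_1^*\order{A}_1$, and set $g_\alg \colonequals g_{1,\alg}$. The identity $\mfp_1\circ\mfg = \mfg_1$ is then immediate since $\mfp_1$ is strict. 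The identity $\mfp_2\circ\mfg = \mfg_2$ reduces, after unwinding the identifications, to the equation
\begin{equation}
    g_{1,\alg}\circ g_1^*(f_{1,\alg}) \;=\; g_{2,\alg}\circ g_2^*(f_{2,\alg}),
\end{equation}
which is exactly the algebra part of the hypothesis $\mff_1\circ\mfg_1 = \mff_2\circ\mfg_2$ (noting $f_{2,\alg} = \text{id}$). Uniqueness of $\mfg$ follows because $g$ is determined on schemes and $g_\alg$ is then forced by $\mfp_1\circ\mfg = \mfg_1$. Uniqueness of the fibre product up to unique isomorphism is then the standard abstract consequence.

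The only real subtlety, and the step I would be most careful with, is the bookkeeping around the base-change identification $p_2^*f_2^*\order{A} \cong p_1^*f_1^*\order{A}$ used to define $p_{2,\alg}$: without the strictness of $\mff_2$ one would have two a priori different candidate algebras $p_1^*\order{A}_1$ and $p_2^*\order{A}_2$ on $X_1\times_X X_2$ with no canonical morphism between them, so the hypothesis is exactly what is needed to make one of the two candidates receive both projection structures canonically.
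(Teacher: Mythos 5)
Your proof is correct and takes essentially the same route as the paper's: equip $X_1\times_X X_2$ with the pullback of the algebra from the non-strict side, make the projection to the strict side's partner strict and give the other projection the algebra map obtained by pulling back $f_{j,\alg}$ along the identification $p_1^*f_1^*\order{A}\cong p_2^*f_2^*\order{A}$, then reduce the universal property to the scheme-level one (which the paper leaves as "straightforward" and you spell out). The only small point the paper records that you omit is that $p_j^*\order{A}_j$ is again a coherent $\calO_{X_1\times_X X_2}$-algebra (immediate from noetherianity of the quasi-projective schemes involved), so that the candidate really is an object of $\ncSch_\field$.
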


\begin{proof}
    By symmetry, we assume without loss of generality that~$\mff_1$ is strict.
    Since~$X,X_1,X_2$ are noetherian,
    the~$\calO_{X_1\times_X X_2}$-algebra~$p_2^*\order{A}_2$ is coherent.
    The morphism~$\mfp_2 = (p_2, \text{id}_{p_2^{*}\order{A}_2})$ is the strict morphism
    induced from the structure morphism~$p_2\colon X_1\times_X X_2 \to X_2$ for schemes.
    The morphism~$\mfp_1 = (p_1, p_2^*(f_{2,\alg}))$ is given
    by the structure morphism~$p_1\colon X_1\times_X X_2\to X_1$ for schemes and
    the~$\calO_{X_1\times_X X_2}$-algebra homomorphism~$p_2^*(f_{2,\alg})\colon p_{1}^*\order{A}_1 \to p_2^*\order{A}_2$.
    Note that~$p_{1}^*\order{A}_1 \cong p_2^*f_2^*\order{A}$ because~$\mff_1$ is strict.

    It is straightforward to verify the universal property
    using the universal property for the fiber product of the underlying
    (commutative schemes).
\end{proof}
\begin{remark}\label{rem:base-change-structure-morphism}
    Note that a morphism~$h\colon T\to S$ of schemes is always strict.
    Hence we can base change a coherent ringed scheme~$\mff = (f, f_\alg)\colon(X,\order{A})\to S$ over a commutative base~$S$
    along~$h\colon T\to S$, to obtain a coherent ringed scheme~$(X_T, \order{A}_T)$ over~$T$,
    where~$X_T = X\times_S T$ and~$\order{A}_T = p_2^*\order{A}$.
    We will return to this perspective in \Cref{sec:deformation-absorption-of-singularities}
    for the deformation absorption in \Cref{thm:deformation-absorption}.
\end{remark}

\paragraph{K-injectives and locally projectives.}
The category~$\QCoh(X,\order{A})$ is a Grothendieck abelian category,
and therefore every cochain complex~$M^\bullet$ of quasicoherent~$\order{A}$-modules admits a K-injective resolution, see \cite[Lemma A.2]{MR4554471}.
The notion of locally free modules has to be replaced by locally projective modules.

\begin{definition}\label{def:locally-projective-modules}
A coherent~$\order{A}$-module~$P\in \coh(X,\order{A})$ is \emph{locally projective} if there exists a (Zariski-)open covering~$X = \bigcup_{i\in I} U_i$ such that~$P\vert_{U_i}$ is a finitely generated projective~$\order{A}\vert_{U_i}$-module for all~$i\in I$.
\end{definition}

By \cite[Corollary 3.23]{MR393100},
this definition agrees with \cite[Definition 3.6]{MR3695056}.
\begin{lemma}\label{lem:locllay-projective-is-local}
    Let~$(X,\order{A})$ be a coherent ringed scheme and~$P\in \coh(X,\order{A})$ a coherent~$\order{A}$-module. Then the following are equivalent:
    \begin{enumerate}[label = \roman*)]
        \item The~$\order{A}$-module~$P$ is locally projective.
        \item For every~$p\in X$, the~$\order{A}_{p}$-module~$P_p$ is projective.
    \end{enumerate}
\end{lemma}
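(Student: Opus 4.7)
The implication (i) $\Rightarrow$ (ii) is formal. If $P\vert_{U_i}$ is a finitely generated projective right $\order{A}\vert_{U_i}$-module for some open $U_i\ni p$, then it is a direct summand of a finite free $\order{A}\vert_{U_i}$-module, and this splitting persists after taking the stalk at $p$, so $P_p$ is projective over $\order{A}_p$.

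The substance of the statement lies in (ii) $\Rightarrow$ (i), which I plan to prove by spreading a stalk-wise splitting to a Zariski-open neighbourhood. Fix $p \in X$ and pick an affine open $V = \Spec R \ni p$ on which $\order{A}\vert_V$ corresponds to an $R$-algebra $A$ which is finitely generated as an $R$-module; this is possible because $\order{A}$ is coherent as an $\calO_X$-module. Since $R$ is noetherian and $A$ is finite over $R$, every finitely generated right $A$-module is automatically finitely presented, because submodules of finitely generated $R$-modules remain finitely generated. Let $M$ be the right $A$-module corresponding to $P\vert_V$; pick a surjection $\pi \colon A^n \twoheadrightarrow M$ and let $\mfp \subset R$ be the prime associated with $p$.

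By hypothesis $\pi_\mfp \colon A_\mfp^n \twoheadrightarrow M_\mfp$ splits, say via $s_\mfp$. The key input is the standard identification
\begin{equation}
    \Hom_A(M,N)_\mfp \xrightarrow{\ \sim\ } \Hom_{A_\mfp}(M_\mfp, N_\mfp),
\end{equation}
valid whenever $M$ is finitely presented as a right $A$-module and $N$ is any right $A$-module. This holds because one proves it first for $M = A$ and $M = A^n$ by direct inspection, and then deduces the general case via the five-lemma applied to a finite presentation of $M$; the argument is insensitive to noncommutativity of $A$. Applied to $N = A^n$, it produces $f \in R\setminus \mfp$ and a morphism $s\colon M_f \to A_f^n$ which restricts to $s_\mfp$ at $\mfp$. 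Applied to $N = M$, the same principle identifies $\End_A(M)_\mfp \cong \End_{A_\mfp}(M_\mfp)$, so after inverting a further element in $R\setminus\mfp$ we can arrange that $\pi \circ s = \mathrm{id}_{M_f}$ on a possibly smaller principal open $U = D(f') \subset V$. Then $M\vert_U$ is a direct summand of $(A\vert_U)^n$, hence finitely generated projective over $\order{A}\vert_U$.

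The only non-routine point is the noncommutative version of the $\Hom$-localisation isomorphism for finitely presented modules; this is where I expect a careful reader to want a sentence or two, but once that identification is in place the spreading argument is entirely parallel to the commutative case.
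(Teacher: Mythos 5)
Your proof is correct: the spreading-out argument via the localization isomorphism $\Hom_A(M,N)_\mfp \cong \Hom_{A_\mfp}(M_\mfp,N_\mfp)$ for finitely presented $M$ (valid here since $A$ is module-finite over the noetherian ring $R$, and localization is at central elements) is exactly the standard proof of this statement. The paper itself gives no argument for the lemma and instead leans on the citation to \cite[Corollary 3.23]{MR393100}, so your write-up simply supplies the details behind that reference; the only step worth one extra clause is the last one, namely that $g\cdot(\pi\circ s-\mathrm{id})=0$ for some $g\in R\setminus\mfp$ forces $\pi\circ s=\mathrm{id}$ after inverting $g$, because $g$ then acts invertibly on the localized module.
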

Under the (standing) assumption that~$X$ is quasi-projective,
\cite[Proposition 3.7]{MR3695056} show that~$\coh(X,\order{A})$ \emph{admits enough locally projectives},
i.e.~for every coherent~$\order{A}$-module~$M$ there exists a locally projective~$\order{A}$-module~$P$
and an~$\order{A}$-module epimorphism~$P\twoheadrightarrow M$.

\paragraph{The derived category of a coherent ringed scheme.}
Let~$(X,\order{A})$ be a coherent ringed scheme.
We denote by~$\sfD(X,\order{A}) \colonequals \sfD(\QCoh(X,\order{A}))$
the \emph{unbounded derived category of quasicoherent~$\order{A}$-modules}.
By~$\sfD^*(X,\order{A})$, for~$* \in \{+, -, \sfb\}$,
we denote the bounded below, bounded above, resp.~bounded
derived category of quasicoherent~$\order{A}$-modules.

Mostly, we are interested in
the \emph{bounded derived category}~$\bd(X,\order{A}) \colonequals \sfD^{\sfb}_\coh(\QCoh(X,\order{A}))$.
Since~$\order{A}$ is coherent, \cite[Lemma A.4]{MR4554471} provides the useful equivalence
\begin{equation}\label{eq:coherent-cohomology-and-bounded-derived-category}
    \bd(X,\order{A}) \simeq \bd(\coh(X,\order{A})).
\end{equation}
From time to time, we will have to make the distinction between $\bd(X,\order{A})$
and the \emph{category of perfect complexes}~$\sfD^\perf(X,\order{A})$,
which is the full triangulated subcategory of~$\sfD(X,\order{A})$
given by cochain complexes that are locally quasi-isomorphic to bounded complexes of locally projective~$\order{A}$-modules.
For a hereditary order $\order{A}$ on a curve $C$,
we have an equality $\sfD^\perf(C,\order{A})= \bd(C,\order{A})$
by \Cref{lem:D-perf-globally}.

\begin{remark}
    It is a straightforward, but important observation, cf.~\cite[Example 3.7]{2408.04561},
that the pushforward~$\mff_* \colon \sfD(X,\order{A})\to \sfD(X)$ of the structure morphism~$\mff \colon (X,\order{A})\to X$
is exact as exactness does not depend on the module structure.
\end{remark}

\paragraph{Semiorthogonal decompositions linear over the base.}  
Let~$\sfT = \bd(X,\order{A})$,
and consider a morphism of schemes~$\mff \colon (X,\order{A})\to S$.
Recall from \cite[Section 2.6]{MR2238172}, \cite[\S\S 2,3]{MR3948688}
that a triangulated subcategory~$\sfA\subset \sfT$
is called~$S$\emph{-linear} if 
\begin{equation}
    M\otimes_X^{\bfL} \bfL f^*\calF \in \sfA \quad \text{for all }M \in \sfA, \calF \in \sfD^\perf(S).
\end{equation}

\begin{remark}\label{rem:extended-S-linearity}
    If~$\sfT = \sfD^{-}(X,\order{A})$ or~$\sfT = \sfD(X,\order{A})$,
    and~$\sfA\subset \sfT$ is~$S$-linear for~$\calF \in \sfD^\perf(S)$,
    then~$S$-linearity automatically holds for~$\sfD^-(S)$, resp.~$\sfD(S)$.
    This follows from \cite[Lemma 4.5]{MR2801403}.
\end{remark}
Let~$\sfT$ be an~$S$-linear triangulated category.
If~$\sfA\subset \sfT$ is a full triangulated subcategory,
the \emph{right orthogonal}~$\sfA^\perp$ (resp.~\emph{left orthogonal}~${}^\perp\sfA$) of~$\sfA$
is defined as the full subcategory containing all objects
$T\in \sfT$ such that~$\Hom_{\sfT}(A,T[i]) = 0$ (resp.~$\Hom_{\sfT}(T,A[i]) = 0$) for all~$A\in \sfA$ and~$i\in \bbZ$.

The next lemma is a straightforward generalization of \cite[Lemma 2.36]{MR2238172}.
\begin{lemma}\label{lem:linearity-of-complement}
    Let~$\mff \colon (X,\order{A})\to S$ be a morphism of schemes,
    and~$\sfA \subset \bd(X,\order{A})$ be an~$S$-linear category 
    such that~$\sfA$ is~$S$-linear.
    Then~${}^{\perp}\sfA$ and~$\sfA^{\perp}$ are~$S$-linear as well.
\end{lemma}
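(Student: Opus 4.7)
The plan is to mimic the standard argument for $S$-linearity of orthogonal complements in a commutative setting, exploiting that $\bfL f^*\calF$ is an ordinary $\calO_X$-complex (not an $\order{A}$-module), so the action~$-\otimes_X^{\bfL}\bfL f^*\calF$ commutes with the $\order{A}$-structure and behaves essentially as in \cite[Lemma 2.36]{MR2238172}. By the symmetry of the two assertions, I would prove the $\sfA^\perp$ statement and let the proof for ${}^\perp\sfA$ proceed dually.

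To show~$S$-linearity of~$\sfA^\perp$, fix~$T\in \sfA^\perp$ and a perfect complex~$\calF\in\sfD^\perf(S)$. I need to verify that~$T\otimes_X^{\bfL}\bfL f^*\calF\in \sfA^\perp$, i.e. that~$\Hom_{\bd(X,\order{A})}(A,(T\otimes_X^{\bfL}\bfL f^*\calF)[i])=0$ for every~$A\in \sfA$ and~$i\in\bbZ$. The key input is that~$\bfL f^*\calF$ is perfect as an~$\calO_X$-complex, hence dualizable, with dual~$(\bfL f^*\calF)^\vee \simeq \bfL f^*(\calF^\vee)$, and tensoring by~$\bfL f^*\calF$ on the category of~$\order{A}$-modules admits a right adjoint given by~$-\otimes_X^{\bfL}\bfL f^*(\calF^\vee)$. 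This adjunction yields
\begin{equation}
    \Hom_{\bd(X,\order{A})}\bigl(A,(T\otimes_X^{\bfL}\bfL f^*\calF)[i]\bigr)
    \;\cong\;
    \Hom_{\bd(X,\order{A})}\bigl(A\otimes_X^{\bfL}\bfL f^*(\calF^\vee),T[i]\bigr).
\end{equation}

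Since~$\calF^\vee\in\sfD^\perf(S)$ and~$\sfA$ is~$S$-linear, the object~$A\otimes_X^{\bfL}\bfL f^*(\calF^\vee)$ lies in~$\sfA$, and the right-hand side vanishes because~$T\in \sfA^\perp$. The symmetric argument, using instead that the left adjoint of $-\otimes_X^{\bfL}\bfL f^*\calF$ is again tensoring with the pullback of a perfect complex on~$S$, handles~${}^\perp\sfA$.

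The only delicate point is checking that the tensor-hom adjunction and the compatibility $(\bfL f^*\calF)^\vee\simeq \bfL f^*(\calF^\vee)$ go through in~$\sfD(X,\order{A})$ rather than~$\sfD(X)$; this is where the observation that the structure morphism~$(X,\order{A})\to X$ has an exact pushforward (noted immediately before the paragraph on linearity) is used, together with the fact that~$\bfL f^*\calF$ enters only through its underlying~$\calO_X$-module structure, so dualizability is inherited from the commutative setting.
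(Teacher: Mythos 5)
Your argument is correct and is exactly the intended one: the paper gives no written proof, merely calling the lemma a straightforward generalization of \cite[Lemma 2.36]{MR2238172}, whose proof is precisely the dualizability argument you give (move $\bfL f^*\calF$ across the Hom via $-\otimes_X^{\bfL}\bfL f^*(\calF^\vee)$ and invoke $S$-linearity of $\sfA$), which carries over to $\order{A}$-modules since the tensor is over $\calO_X$.
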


Recall that an ($S$-linear) triangulated subcategory~$\sfA$ of~$\sfT$ is called \emph{right admissible} (resp.~\emph{left admissible})
if its embedding functor~$\alpha \colon \sfA \to \sfT$ admits a right adjoint~$\alpha^!$ (resp.~a left adjoint~$\alpha^*$).
We say that~$\sfA$ is \emph{admissible} if it is both, left and right admissible.

A sequence of ($S$-linear) triangulated subcategories~$\sfA_1,\ldots, \sfA_m$ is called
an \emph{($S$-linear) semiorthogonal decomposition} of~$\sfT$ if
\begin{enumerate}[label = \roman*)]
    \item for every~$i>j$ one has~$\sfA_j \subset \sfA_i^\perp$,
    \item the smallest triangulated subcategory containing all~$\sfA_i$ is~$\sfT$.
\end{enumerate}
In this case we write~$\sfT = \langle \sfA_1,\ldots, \sfA_m\rangle$ and
every object~$T\in \sfT$ can be decomposed into distinguished triangles
\begin{equation}\label{eq:split-triangles-from-sod}
    T_\ell \to T_{\ell -1} \to A_\ell\to T_{\ell}[1]\quad\text{for }1\le\ell \le m,
\end{equation}
such that~$T_m = 0$,~$T_0 = T$, and~$A_\ell \in \sfA_\ell$.

Admissibility of components of a semiorthogonal decomposition is not automatic.
Therefore \cite[Definition 2.6]{MR2801403} introduced the notion of strong semiorthogonal decompositions.
\begin{definition}
    We call a semiorthogonal decomposition~$\sfT = \langle \sfA_1,\ldots, \sfA_m\rangle$ \emph{strong}
    if the component~$\sfA_k$ is admissible in~$\langle \sfA_k,\ldots, \sfA_m\rangle$. 
\end{definition}
We finish this section with a relative criterion
for~$S$-linear semiorthogonality generalizing \cite[Lemma 2.7]{MR2801403}
to coherent ringed schemes.
\begin{lemma}\label{lem:rel-criterion-semiorthogonality}
    Let~$\mff\colon (X,\order{A})\to S$ be a morphism and~$\sfA,\sfB\subseteq \sfD(X,\order{A})$ be~$S$-linear admissible subcategories.
    Then~$\sfA\subseteq \sfB^{\perp}$ if and only if~$\bfR f_*\bfR\inthom_{\order{A}}(\sfB,\sfA) = 0$. 
\end{lemma}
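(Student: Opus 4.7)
The plan is to run the standard tensor-hom and $(\bfL f^*, \bfR f_*)$-adjunction yoga that relates external Hom-spaces to global sections of internal Hom-sheaves. Concretely, for any $A\in\sfA$, $B\in\sfB$, any $\calF\in\sfD^\perf(S)$ and any $i\in\bbZ$, two applications of adjunction give the natural identifications
\[
\Hom_{(X,\order{A})}\!\bigl(B\otimes^{\bfL}_{\calO_X}\bfL f^*\calF,\;A[i]\bigr)\cong\Hom_{\calO_X}\!\bigl(\bfL f^*\calF,\;\bfR\inthom_\order{A}(B,A)[i]\bigr)\cong\Hom_S\!\bigl(\calF,\;\bfR f_*\bfR\inthom_\order{A}(B,A)[i]\bigr).
\]
The $S$-linearity of $\sfB$ is precisely what guarantees that $B\otimes^{\bfL}_{\calO_X}\bfL f^*\calF$ stays inside $\sfB$, so the left-hand side is controlled by the hypothesis $\sfA\subseteq\sfB^\perp$; by \Cref{rem:extended-S-linearity} this extends to all of $\sfD^\perf(S)$ without issue.

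The reverse implication is essentially free: assuming $\bfR f_*\bfR\inthom_\order{A}(B,A)=0$ for every $A\in\sfA,\,B\in\sfB$, taking derived global sections yields
\[
\bfR\Hom_{(X,\order{A})}(B,A)=\bfR\Gamma\!\bigl(X,\bfR\inthom_\order{A}(B,A)\bigr)=\bfR\Gamma\!\bigl(S,\bfR f_*\bfR\inthom_\order{A}(B,A)\bigr)=0,
\]
so $\Hom(B,A[i])=0$ for every $i$ and $A\in\sfB^\perp$ follows. For the forward direction, I would apply the displayed adjunction chain, using $B\otimes^{\bfL}\bfL f^*\calF\in\sfB$ and $\sfA\subseteq\sfB^\perp$ to conclude $\Hom_S\!\bigl(\calF,\bfR f_*\bfR\inthom_\order{A}(B,A)[i]\bigr)=0$ for every $\calF\in\sfD^\perf(S)$ and every $i$; then vanishing of $\bfR f_*\bfR\inthom_\order{A}(B,A)$ as an object of $\sfD(S)$ follows from the fact that perfect complexes form a set of compact generators for $\sfD_\qc(S)$.

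The one delicate point—where I expect the main technical friction—is this last step: one must know that $\bfR f_*\bfR\inthom_\order{A}(B,A)$ actually lies in the quasi-coherent derived category of $S$, so that testing against $\sfD^\perf(S)$ detects the zero object. Under the standing hypotheses of the paper ($X$ quasi-projective, $\order{A}$ coherent, $f$ quasi-compact and quasi-separated) this reduces to the commutative facts already used in \cite[Lemma~2.7]{MR2801403}, and the lemma becomes the straightforward noncommutative upgrade of that statement.
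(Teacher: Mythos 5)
Your proposal is correct and follows essentially the same route as the paper: the forward direction is the same tensor-hom and $(\bfL f^*,\bfR f_*)$ adjunction chain combined with $S$-linearity of $\sfB$, reducing to the detection of zero objects by perfect complexes exactly as in the proof of Kuznetsov's Lemma 2.7 (which also covers your "delicate point"), and your converse via derived global sections is the paper's argument of testing against $\calO_S$ together with exactness of the pushforward along the structure morphism.
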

\begin{proof}
    We start with the assumption that~$\sfA \subset \sfB^\perp$.
    By the proof of \cite[Lemma 2.7]{MR2801403}
    it suffices to show that~$\bfR \Hom_S(P,\bfR f_* \bfR \inthom_{\order{A}}(N,M)) = 0$
    for all~$P \in \sfD^\perf(S)$,~$N \in \sfB$ and~$M \in \sfA$.
    Using the adjunction~$\bfL f^* \dashv \bfR f_*$ and the tensor-hom adjunction, one obtains
    \begin{equation*}
        \bfR \Hom_S(P,\bfR f_* \bfR \inthom_{\order{A}}(N,M)) \cong \bfR \Hom_S(\bfL f^* P, \bfR \inthom_{\order{A}}(N,M))
        \cong \bfR\Hom_{\order{A}}(\bfL f^* P \otimes^\bfL_X N, M) = 0.
    \end{equation*}
    The vanishing in the last step follows from the~$S$-linearity of~$\sfB$.
    For the converse, note that 
    \begin{equation}
        \bfR\Hom_S(\calO_S,\bfR f_* \bfR \inthom_{\order{A}}(N,M)) \cong \bfR\Hom_X(\calO_X,\bfR \inthom_{\order{A}}(N,M))
        \cong \bfR\Hom_{\order{A}}(N,M)
    \end{equation}
    by adjunction and the fact that the pushforward along the structure morphism~$(X,\order{A})\to X$ is exact.
\end{proof}

\section{Hereditary orders}\label{sec:hereditary-orders}
Let~$X$ be an (integral) scheme.
An~$\calO_X$\emph{-order} is a coherent~$\calO_X$-algebra~$\order{A}$ which
is torsion-free as an~$\calO_X$-module
such that~$\order{A}\otimes_X \field(X)$ is a central simple~$\field(X)$-algebra.
Consequently, $(X,\order{A})$ is a coherent ringed scheme.

By definition of an order $\order{A}$ there is a maximal open dense subset~$U\subset X$
such that the restriction~$\order{A}\vert_U$ is Azumaya.
The complement~$\Delta_\order{A} = X\setminus U$ is the \emph{ramification locus of}$\order{A}$.

We say that an~$\calO_X$-order~$\order{B}$ is an \emph{overorder of }$\order{A}$
if~$\order{A}\subseteq\order{B} \subseteq \order{A}\otimes_X\field(X)$.
The order~$\order{A}$ is \emph{maximal} if there are no proper overorders of~$\order{A}$.

Throughout this section,
let $C$ be a smooth quasi-projective curve over~$\field$,
and~$\order{A}$ a hereditary~$\calO_C$-order with ramification locus~$\Delta_\order{A}$.
Denote by~$\mff \colon (C,\order{A})\to C$ the structure morphism.
It is shown in \cite[Corollary 3.24, Theorem 40.5]{MR393100}
that being a hereditary order is a local property.
This means that $\order{A}$ is a hereditary $\calO_C$-order if and only if
the localization~$\order{A}_p$ is a hereditary~$\calO_{C,p}$-order for every point~$p\in C$.

In the following, let~$R = \calO_{C,p}$ be
the local ring at~$p\in C$
and~$\mfm \unlhd R$ be the maximal ideal of~$R$.
Since $R$ is the unique maximal $R$-order in $\field(C)$ by normality of~$C$,
it follows from \cite[Theorem 39.14]{MR393100}
that there is an isomorphism of~$R$-algebras 
\begin{equation}\label{eq:classification-hereditary-order}
    \order{A}_p \cong \begin{pmatrix}
        R & R & \ldots & R \\
        \mfm & R &\ldots & R \\
        \vdots & & \ddots &\vdots\\
        \mfm & \mfm & \ldots & R
    \end{pmatrix}^{(n_1,\ldots, n_r)}\quad \subseteq \Mat_n(R),
\end{equation}  
where~$n^2 = \rk \order{A}$,
$n_1+\ldots+n_r = n$,
and the superscript~$(n_1, \ldots, n_r)$ indicates that
the~$(i,j)$-th coordinate of the matrix on the right-hand side
is to be read as an~$n_i\times n_j$- matrix with entries in~$\mfm$, or~$R$ respectively.
We call~$(n_1,\ldots,n_r)\in \bbN^r$ the \emph{ramification data} of~$\order{A}$ at~$p \in C$.
If~$\order{A}$ is an~$\calO_C$-order
such that the ramification data at~$p\in C$
satisfies~$r > 1$,
then~$\order{A}$ is \emph{ramified at }$p$ with \emph{ramification index}~$r$.

By \cite[Theorem 39.23]{MR393100},
the indecomposable projective~$\order{A}_p$-modules are given (up to isomorphism)
by the rows 
\begin{equation}\label{eq:locally-indec-projectives}
    L_p^{(j)} = E_{\alpha_{j},\alpha_{j}}\order{A}_p,\quad j = 1,\ldots, r,
\end{equation}
where~$\alpha_j = n_1+\ldots+n_{j}$, and~$E_{\alpha,\beta} \in \Mat_n(R)$ denote the elementary matrices.

\paragraph{The fibers of hereditary orders.}
Let $p\in C$.
We describe the fiber~$\order{A}(p)\colonequals \order{A}_p\otimes_R\field(p)$ over~$p$.
In the language of \Cref{rem:base-change-structure-morphism}
this is the base change of
the structure morphism~$\mff\colon (C,\order{A})\to C$
along the closed immersion~$i_p\colon \Spec \field(p)\to C.$ 

Let~$Q_r$ be the cyclic quiver
with~$r$ vertices~$Q_0 = \{1,\ldots, r\}$
and~$r$ arrows~$Q_1 = \{\mu_{i,i+1}\colon i \to i+1\}$,
where here and in the following the numbering has to be understood modulo~$r$.
The quiver~$Q_r$ can be depicted as follows:
\begin{equation}\label{eq:cyclic-quiver}
    \begin{tikzpicture}[baseline,vertex/.style={draw, circle, inner sep=0pt, text width=2mm}, scale=2.5]
        \node[vertex] (a) at (150:.8cm) {};
        \node[vertex] (b) at (90:.8cm)  {};
        \node (b1) at (90:.9cm) {};
        \node[vertex] (c) at (30:.8cm)  {};
        \node         (d) at (350:.8cm) {$\vdots$};
        \node         (e) at (310:.8cm) {$\ldots$};
        \node[vertex] (f) at (270:.8cm) {};
        \node[vertex] (g) at (210:.8cm) {};
        \node[above left] at (a) {$\scriptstyle 1$};
        \node       at (b1) {$\scriptstyle 2$};
        \node[above right]  at (c) {$\scriptstyle 3$};
        \node[below]       at (f) {$\scriptstyle r-1$};
        \node[below left] at (g) {$\scriptstyle r$};
        \draw[-{Classical TikZ Rightarrow[]}] (a) -- node [above] {$\scriptstyle\mu_{1,2}$\quad\ } (b);
        \draw[-{Classical TikZ Rightarrow[]}] (b) -- node [above] {\quad$\scriptstyle\mu_{2,3}$} (c);
        \draw[-{Classical TikZ Rightarrow[]}] (c) -- node [right] {$\scriptstyle\mu_{3,4}$} (d);
        \draw[-{Classical TikZ Rightarrow[]}] (e) -- node [below right] {$\scriptstyle\mu_{r-2,r-1}$} (f);
        \draw[-{Classical TikZ Rightarrow[]}] (f) -- node [below] {$\scriptstyle\mu_{r-1,r}\quad$} (g);
        \draw[-{Classical TikZ Rightarrow[]}] (g) -- node [left] {$\scriptstyle\mu_{r,1}$} (a);
    \end{tikzpicture}
\end{equation}
Denote by~$\mu_{[i,j]}\colon i \to i+1 \to \ldots \to j-1 \to j$
the shortest path of positive length from~$i$ to~$j$ and let
\begin{equation}
    I = (\rad\field Q_r)^{n-1} \triangleleft \field Q_r
\end{equation}
be the admissible ideal generated by all cycles~$\mu_{[i,i]}$.
The quotient
\begin{equation}\label{eq:fibre-over-closed-point}
    \Lambda_r = \field Q_r/I
\end{equation}
is an~$r^2$-dimensional~$\field$-algebra of infinite global dimension.

\begin{lemma}\label{lem:morita-equivalence-to-cyclic-algebra}
    Let~$(C,\order{A})$ be a hereditary order which is ramified at~$p\in C$ with ramification index~$r$.
    Then the~$\field$-algebra~$\order{A}(p) = \order{A}_p\otimes_C \field(p)$ is Morita equivalent to~$\Lambda_r$.
\end{lemma}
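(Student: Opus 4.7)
The plan is to exhibit an explicit Morita equivalence by choosing a basic idempotent in $\order{A}(p)$ and computing the corresponding corner algebra, then matching it with $\Lambda_r$ by hand. Let $R = \calO_{C,p}$, let $t \in \mfm$ be a uniformizer, set $\alpha_j = n_1 + \ldots + n_j$, and define
\begin{equation*}
    e \colonequals \sum_{j=1}^{r} E_{\alpha_j,\alpha_j} \in \order{A}_p.
\end{equation*}
By~\eqref{eq:locally-indec-projectives}, the right $\order{A}_p$-module $e\order{A}_p = \bigoplus_{j=1}^{r} L_p^{(j)}$ contains one copy of each indecomposable projective and is therefore a progenerator. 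Standard Morita theory gives $\order{A}_p \sim e\order{A}_p e$, and this equivalence descends modulo $\mfm$: the image of $e$ in $\order{A}(p)$ is still a full idempotent, so $\order{A}(p)$ is Morita equivalent to $e\,\order{A}(p)\,e \cong (e\order{A}_p e) \otimes_R \field(p)$.

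Next I would compute this corner algebra explicitly. Using~\eqref{eq:classification-hereditary-order}, the order $e\order{A}_p e$ is the $R$-suborder of $\Mat_r(\field(C))$ with $(i,j)$-entry equal to $R$ for $i \le j$ and to $\mfm$ for $i > j$, i.e.~the hereditary order with uniform ramification data $(1,\ldots,1)$. Reducing modulo $\mfm$, the algebra $e\,\order{A}(p)\,e$ has $\field(p)$-basis given by the residue classes of $E_{\alpha_i,\alpha_j}$ for $i \le j$ together with $t\,E_{\alpha_i,\alpha_j}$ for $i > j$; in particular its $\field(p)$-dimension equals $r^2$, which agrees with $\dim_{\field(p)} \Lambda_r$.

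To produce the isomorphism with $\Lambda_r$, I would define a $\field(p)$-algebra homomorphism $\varphi \colon \field Q_r \to e\,\order{A}(p)\,e$ by sending the vertex idempotent at $j$ to $E_{\alpha_j,\alpha_j}$, the arrow $\mu_{j,j+1}$ to $E_{\alpha_j,\alpha_{j+1}}$ for $1 \le j < r$, and the wraparound arrow $\mu_{r,1}$ to $t\,E_{\alpha_r,\alpha_1}$ (viewed modulo $\mfm \order{A}_p$). Direct matrix multiplication shows that a path $\mu_{[i,j]}$ of length $< r$ is sent to either $E_{\alpha_i,\alpha_j}$ (when the path does not cross the wraparound) or to $t\,E_{\alpha_i,\alpha_j}$ (when it does), both of which are nonzero in $\order{A}(p)$. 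The full cycle $\mu_{[j,j]}$ of length $r$, however, crosses the wraparound exactly once and is therefore sent to $t\,E_{\alpha_j,\alpha_j}$, which vanishes modulo $\mfm\order{A}_p$. Consequently $\varphi$ factors through $\Lambda_r = \field Q_r / I$, and the induced map is surjective with matching $\field(p)$-dimensions, hence an isomorphism.

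The main bookkeeping obstacle is tracking the uniformizer $t$ through compositions: only the wraparound arrow carries a factor of $t$, so a product of arrows in $\field Q_r$ acquires exactly one $t$ per full turn around the cycle. This asymmetry is precisely what forces the length-$r$ cycles to vanish in $\order{A}(p)$ while all shorter paths remain nonzero, matching the admissible ideal $I$ generated by the cycles $\mu_{[i,i]}$. Once this accounting is fixed, the rest reduces to elementary matrix arithmetic, and combining the isomorphism $e\,\order{A}(p)\,e \cong \Lambda_r$ with the Morita equivalence from the first paragraph gives the claim.
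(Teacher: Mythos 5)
Your proof is correct, and it follows the same two-step skeleton as the paper's: first Morita-reduce $\order{A}_p$ to the standard hereditary order $\Gamma\subset \Mat_r(R)$ with ramification data $(1,\ldots,1)$, then identify $\Gamma\otimes_R\field(p)$ with $\Lambda_r$ via a surjection from $\field Q_r$ that kills the length-$r$ cycles, concluding by a dimension count. The difference is that you make both steps explicit where the paper cites: the paper invokes Chan--Ingalls' Theorem 7.6 for the Morita equivalence $\order{A}_p\sim\Gamma$ and an external result (with a sketched alternative relying on a cited multiplication table) for $\Gamma\otimes_R\field(p)\cong\Lambda_r$, whereas you construct the corner idempotent $e=\sum_j E_{\alpha_j,\alpha_j}$, verify that $e\order{A}_p$ is a progenerator, and track the uniformizer $t$ through the matrix units by hand, with the single factor of $t$ per full turn around the cycle accounting exactly for the admissible ideal $I$. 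This buys self-containedness, and --- since you note that $e$ stays a full idempotent after reduction modulo $\mfm$ --- it makes transparent why the Morita equivalence descends to the fibre, a point the paper's phrasing (``hence $\order{A}(p)$ is Morita equivalent to $\Lambda_r$'') leaves implicit as a base change of the Morita bimodule. The only minor point to fix in a final write-up is to state the path-composition convention in $\field Q_r$ (paths composed left to right, so that $\mu_{i,i+1}\mapsto E_{\alpha_i,\alpha_{i+1}}$ is compatible with the vertex idempotents), which matches the paper's convention and does not affect the argument.
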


\begin{proof}
By \cite[Theorem 7.6]{MR2018958}, we find that~$\order{A}_p$ is
Morita equivalent to an~$R$-algebra~$\Gamma \subset \Mat_r(R)$
as in \eqref{eq:classification-hereditary-order}
with~$n_1 = \ldots = n_r = 1$.
The algebra~$\Gamma\otimes_R\field(p)$ is isomorphic to~$\Lambda_r$
by \cite[Theorem 3.1]{MR2029238},
and hence~$\order{A}(p)$ is Morita equivalent to~$\Lambda_r$.

We sketch a different proof of why~$\Gamma\otimes_R\field$ is isomorphic to~$\Lambda_r$
using the algorithmic characterization of finite-dimensional basic~$\field(p)$-algebras
as quotients of path algebras of a quiver by an admissible ideal from \cite[Section II.3]{MR2197389}.
By \cite[Section 2]{MR657429},
there is a~$\field$-basis~$\{e_{ij}\}_{1\le i,j \le r}$ of~$\Gamma\otimes_R\field(p)$ such that 
\begin{equation}\label{eq:multiplication-rule-fiber}
	e_{ij}\cdot e_{jk} = \begin{cases}
	e_{ik} &\text{if } i\le j\le k\text{ or } j\le k < i \text{ or }k< i\le j,\\
	0 &\text{otherwise.}
	\end{cases}
\end{equation}
Moreover, one has~$e_{ij}\cdot e_{j^\prime k} = 0$ if~$j\neq j^\prime$.
The algebra~$\Gamma\otimes_R\field(p)$ is basic and connected,
with~$e_{11},\ldots, e_{rr}$ a complete set of primitive orthogonal idempotents.

Define a~$\field$-algebra homomorphism ~$\varphi\colon \field Q_r\to \Gamma\otimes_R\field(p)$
by sending the lazy path~$e_i$,
associated with the vertex~$i\in Q_0$,
to the idempotent~$e_{ii}$,
and the arrow~$\mu_{i,i+1}\in Q_1$ to~$e_{i,i+1}$.
It follows from \eqref{eq:multiplication-rule-fiber} that this is a surjective~$\field$-algebra homomorphism.
For dimension reasons and from the relations~$e_{i,i+1}\cdot \ldots\cdot e_{i-1,i}=0$ for each~$i = 1,\ldots r$,
it follows that~$\varphi$ induces an isomorphism~$\Lambda_r\cong \Gamma\otimes_R\field(p)$. 
\end{proof}

We can use the representation theory of~$\field Q_r/I$ to characterize simple~$\Lambda_r$-modules.
Since there are no non-zero oriented cycles, thanks to the relations,
the simple~$\Lambda_r$-modules up to isomorphism are given by~$S_1,\ldots, S_r$ with
\begin{equation}\label{eq:definition-simple-module}
    (S_i)_j = \begin{cases}
        \field &\text{if } j = i,\\
        0 &\text{if }j \in Q_0\setminus\{i\}.
    \end{cases}
\end{equation}

\begin{remark}
The number of isomorphism classes of simple~$\order{A}(p)$-modules does not change under the
Morita equivalence from \Cref{lem:morita-equivalence-to-cyclic-algebra}.
It is straightforward that the simple~$\order{A}(p)$-module corresponding to~$S_i$ consists of a single~$n_i$-dimensional
non-trivial~$\field$-vector space
at the vertex~$i\in Q_0$.
In the remainder we will only be interested in properties of the simple~$\Lambda_r$-modules
which are preserved under the Morita equivalence. 
In particular \Cref{thm:absorption-of-singularities} and \Cref{thm:deformation-absorption} do only depend
on~$\order{A}(p)$ up to Morita equivalence.
\end{remark}

\paragraph{Maximal overorders.}
Denote by~$\Delta_\order{A} = \{p_1,\ldots, p_m\}\subset C$ the ramification locus of~$\order{A}$ and
let~$r_i>1$ be the corresponding ramification index of~$\order{A}$ at~$p_i\in C$.

The inclusion of every maximal overorder~$j_{\order{B},\alg}\colon\order{A}\hookrightarrow\order{B}$
gives rise to an extension
\begin{equation}
    \mfj_{\order{B}}= (\text{id}_{\order{B}},j_{\order{B},\alg})\colon (C,\order{B})\to (C,\order{A}).
\end{equation}
It was shown in \cite[Theorem 3.1]{MR219527} that
the pushforward~$\mfj_{\order{B},*}\order{B}$ is
a locally projective (left and right)~$\order{A}$-module.
We study the shape of the locally projective~$\order{A}$-modules more closely
with respect to the indecomposable projective~$\order{A}_p$-modules~$L_p^{(j)}$ for~$j = 1,\ldots,r$
from \eqref{eq:locally-indec-projectives}.

\begin{definition}\label{def:type-of-maximal-overorder}
    Let~$p\in \Delta_\order{A}$ such that~$\order{A}$ has ramification index~$r$ at~$p$.
    A nonzero locally projective~$\order{A}$-module~$P$ is called \emph{purely of type~$j$ at~$p$} if
    \begin{equation}
        P_p \cong {L_p^{(j)}}^{\oplus k}\quad \text{for some }k\in \bbN.
    \end{equation}
    If the type~$j$ is not specified, we say that~$M$ is \emph{purely of one type at~$p$}.
\end{definition}

As shown in \cite[Theorem 40.10]{MR393100},
hereditary orders can be characterized by the maximal orders containing them.
Using the classification of the indecomposable projective~$\order{A}_p$-modules,
one can construct all the maximal orders containing~$\order{A}$ explicitly.

\begin{proposition}\label{prop:classification-of-overorders}
    Let~$(C,\order{A})$ be a hereditary order with ramification data as above.
    \begin{enumerate}[label = \roman*)]
        \item There are precisely~$r_1\cdot \ldots \cdot r_m$ maximal overorders of~$\order{A}$.
        \item Every maximal overorder~$\order{B}$ of~$\order{A}$ is locally projective as a left and a right~$\order{A}$-module.
        \item Every maximal overorder~$\order{B}\supseteq \order{A}$ is purely of one type at each ramification point.
        Moreover, a maximal overorder is uniquely determined by its types at the ramification points.
    \end{enumerate}
\end{proposition}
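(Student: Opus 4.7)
The plan is to reduce the global classification to a local problem at each ramification point, since $\order{A}$ is Azumaya and hence already maximal at each point of $C\setminus \Delta_\order{A}$. Consequently, any overorder $\order{B}\supseteq \order{A}$ must agree with $\order{A}$ outside $\Delta_\order{A}$, and by the local-to-global principle for coherent sheaves on the curve $C$, specifying such an $\order{B}$ is equivalent to specifying local overorders $\order{B}_{p_i}$ at each ramification point $p_i$. Moreover, $\order{B}$ is a maximal order if and only if each $\order{B}_{p_i}$ is a maximal $\calO_{C,p_i}$-order, since maximality of an order is itself a local property.

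I would then fix a ramification point $p = p_i$ of ramification index $r = r_i$ and use the normal form \eqref{eq:classification-hereditary-order} together with \cite[Theorem 40.10]{MR393100} to enumerate the maximal $\calO_{C,p}$-orders containing $\order{A}_p$. There are exactly $r$ such orders $\order{B}_p^{(1)},\ldots,\order{B}_p^{(r)}$; each is a concrete matrix algebra obtainable from the pattern \eqref{eq:classification-hereditary-order} by turning one cyclic block of $\mfm$-entries into $R$-entries. Crucially, a direct inspection using the description \eqref{eq:locally-indec-projectives} of the indecomposable projectives $L_p^{(j)}$ shows that, as a right $\order{A}_p$-module, $\order{B}_p^{(j)}$ is a direct sum of copies of $L_p^{(j)}$. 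Multiplying the number of local choices over all ramification points then yields the count $r_1\cdot\ldots\cdot r_m$ in (i).

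Part (ii) is then immediate: since $\order{B}_p^{(j)}$ is a finite direct sum of copies of the projective $\order{A}_p$-module $L_p^{(j)}$, it is projective over $\order{A}_p$, and \Cref{lem:locllay-projective-is-local} upgrades this pointwise statement to local projectivity of the sheaf $\mfj_{\order{B},*}\order{B}$ as a right $\order{A}$-module; the left-module version is the content of \cite[Theorem 3.1]{MR219527} recalled before the proposition. For (iii), the local shape $\order{B}_p \cong (L_p^{(j)})^{\oplus k}$ obtained in the previous step gives a well-defined type $j(p)\in\{1,\ldots,r\}$ for each ramification point $p$ in the sense of \Cref{def:type-of-maximal-overorder}, and the resulting assignment $\order{B}\mapsto (j(p_1),\ldots,j(p_m))$ lands in $\{1,\ldots,r_1\}\times\ldots\times\{1,\ldots,r_m\}$ and is bijective by the local uniqueness established in the enumeration step.

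The main obstacle is the local enumeration at a single ramification point: one has to pin down that the maximal $\calO_{C,p}$-orders containing $\order{A}_p$ are exactly $r$ in number and are naturally indexed by the indecomposable projectives $L_p^{(j)}$ in a way compatible with the type terminology of \Cref{def:type-of-maximal-overorder}. Once this local classification is secured from the matrix presentation \eqref{eq:classification-hereditary-order} and Reiner's structure theorems, the remainder of the proof is combinatorial gluing along $\Delta_\order{A}$.
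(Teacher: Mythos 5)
Your overall strategy coincides with the paper's: localize at the ramification points (where $\order{A}$ is Azumaya, hence maximal, away from $\Delta_\order{A}$), classify the local maximal overorders of $\order{A}_p$ in terms of the indecomposable projectives $L_p^{(j)}$, and then count and glue using the fact that a torsion-free coherent sheaf on $C$ is determined by, and can be modified at, finitely many localizations (the paper makes this precise via the local modification theorem, which your appeal to a ``local-to-global principle'' should cite). However, your concrete local enumeration contains an error on which the later steps rest. You assert that the $r$ maximal orders containing $\order{A}_p$ are obtained from the pattern \eqref{eq:classification-hereditary-order} by turning one cyclic block of $\mfm$-entries into $R$-entries. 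Any algebra produced this way is a subring of $\Mat_n(R)$, and a maximal order contained in $\Mat_n(R)$ must equal $\Mat_n(R)$, since otherwise $\Mat_n(R)$ would be a proper overorder of it; so only one of the $r$ local maximal overorders, namely $\Mat_n(R)$ itself, arises in this fashion. Already for $r=2$ the order $\order{A}_p = \left(\begin{smallmatrix} R & R \\ \mfm & R \end{smallmatrix}\right)$ has exactly two maximal overorders, $\Mat_2(R)$ and $\left(\begin{smallmatrix} R & \mfm^{-1} \\ \mfm & R \end{smallmatrix}\right)$; the second has $\mfm^{-1}$-entries, i.e.~it is a conjugate of $\Mat_2(R)$ inside $\Mat_2(\field(C))$ rather than a relaxation of the pattern.

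This matters because your ``direct inspection'' that $\order{B}_p^{(j)}$ is a direct sum of copies of $L_p^{(j)}$ --- the input to both (ii) and (iii) --- is carried out on these incorrectly described algebras, so that step fails for the $r-1$ nontrivial overorders. The repair is the paper's argument: by Tsen's theorem $\order{A}\otimes_C\field(C)\cong\End_{\field(C)}(V)$, so every maximal overorder is locally $\End_{\calO_{C,p}}(L_p)$ for an $\calO_{C,p}$-lattice $L_p\subset V$; the induced right $\order{A}_p$-module structure on $L_p$ is indecomposable (its endomorphism ring is generically simple), hence $L_p\cong L_p^{(j)}$ for some $j$ because $\order{A}_p$ is hereditary, and then
\begin{equation*}
    \order{B}_p\cong\End_{\calO_{C,p}}(L_p^{(j)})\cong\Hom_{\calO_{C,p}}(\calO_{C,p}^{\oplus n},L_p^{(j)})\cong (L_p^{(j)})^{\oplus n}
\end{equation*}
as right $\order{A}_p$-modules. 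With this substitution (and with the count taken from Reiner's Theorems 39.23/40.8 rather than 40.10, which is the characterization of hereditary orders by the maximal orders containing them), your gluing, counting, and the deductions of (ii) and (iii), including the left-module statement via the cited result of Harada, go through as in the paper.
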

\begin{proof}
    The first two parts can be found in \cite[Theorem 39.23 and Theorem 40.8]{MR393100}.
    Let us indicate how to prove these two statements for an algebraically closed field
    using the classification \eqref{eq:locally-indec-projectives}
    of the indecomposable projective~$\order{A}_p$-modules for~$p\in C$.

    By Tsen's theorem,~$\order{A}\otimes_C\field(C)\cong \End_{\field(C)}(V)$, 
    for some~$n$-dimensional vector space~$V$.
    Let~$\order{B}\supset\order{A}$ be a maximal overorder.
    Then~$\order{B}_p\supset \order{A}_p$ is a maximal overorder.
    From the classification of maximal orders in discrete valuation rings (see Corollary 17.4 of \emph{op.~cit.}),
    it follows that there exists an~$\calO_{C,p}$-lattice~$L_p$ in~$V$
    such that~$\order{B}_p = \End_{\calO_{C,p}}(L_p)$ and
   ~$L_p$ has the structure of a right~$\order{A}_p$-module
    from the induced~$\order{B}_p$-module structure on~$L_p$.
    It must be indecomposable as~$\order{A}_p$-module, because its endomorphism ring 
    is generically central simple.
    Therefore~$L_p$ is an indecomposable projective~$\order{A}_p$-module,
    e.g.~by \cite[Theorem 10.6]{MR393100} and the fact that~$\order{A}_p$ is hereditary.

    Now~$\order{B}\subset \End_{\field(C)}(V)$ is coherent and torsion-free, and therefore uniquely determined
    by the localization at every point~$p\in C$, see \cite[Proposition 6.4]{MR4385131}.
    For a ramified point~$p_i\in \Delta_i$
    we have~$r_i$ choices of~$L_{p_i}= L_{p_i}^{(j)}$,~$j= 1,\ldots, r_i$,
    hence~$r_i$ different possible specializations of~$\order{B}$ at~$p_i$,
    and we conclude (i).

    For the second part,
    we pass to the local ring at a point $p\in C$ using \Cref{lem:locllay-projective-is-local}.
    From the construction of~$\order{B}$ above,
    it is clear that~$\order{B}$ is locally projective as a right~$\order{A}$-module.
    The isomorphism as left modules follows similarly using
    that~$\Hom_{\calO_{C,p}}(L_{p_i}^{(j)},\calO_{C,p})$ describes indecomposable projective left modules. 

    The third part can be calculated locally as well.
    We have at~$p_i \in \Delta_\order{A}$ that  
    \begin{equation}
        \order{B}_{p_i} \cong \End_{\calO_{C,p_i}}(L_{p_i}^{(j)}) \cong
        \Hom_{\calO_{C,p_i}}(\calO_{C,p_i}^{\oplus n},L_{p_i}^{(j)})
        \cong {L_{p_i}^{(j)}}^{\oplus n}.
    \end{equation}
    The isomorphism as left~$\order{A}_{p_i}$-modules is given similarly.
    Uniqueness follows from the local modification theorem \cite[Proposition 6.4]{MR4385131}.
\end{proof}

The explicit description of~$\order{B}$
allows us to draw some powerful conclusions about~$\mfj_{\order{B}}$.
\begin{lemma}\label{lem:properties-of-morphisms-by-maximal-overorders}
    Let~$\mfj_\order{B}\colon (C,\order{B})\to (C,\order{A})$
    be the extension corresponding to a maximal overorder~$\order{B}\supset \order{A}$.
    Then the following hold.
    \begin{enumerate}[label = \roman*)]
        \item The pullback~$\mfj_\order{B}^* \colon \QCoh(C,\order{A})\to \QCoh(C,\order{B})$ is exact.
        \item The pushforward~$\mfj_\order{B,*}\colon\QCoh(C,\order{B})\to \QCoh(C,\order{A})$ is exact and fully faithful.
    \end{enumerate}
\end{lemma}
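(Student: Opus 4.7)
The key observation is that $\mfj_\order{B}$ is an extension, so its underlying scheme morphism is $\mathrm{id}_C$; the pullback becomes $M \mapsto M \otimes_\order{A} \order{B}$ and the pushforward becomes restriction of scalars along the inclusion $\order{A} \hookrightarrow \order{B}$. Part (i) thus reduces to showing that $\order{B}$ is flat as a left $\order{A}$-module, which is immediate from Proposition \ref{prop:classification-of-overorders}(ii) (locally projective, which via Lemma \ref{lem:locllay-projective-is-local} means projective at each stalk, hence flat). Exactness of the pushforward in part (ii) is then automatic, since restriction of scalars does not change the underlying quasicoherent sheaf.

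The real content of part (ii) is full faithfulness of $\mfj_{\order{B},*}$. I would exploit the adjunction $\mfj_\order{B}^* \dashv \mfj_{\order{B},*}$: full faithfulness of the right adjoint is equivalent to the counit $\mfj_\order{B}^* \mfj_{\order{B},*} N \to N$ being an isomorphism for every $N \in \QCoh(C,\order{B})$. Unwinding, this counit is the multiplication map $N \otimes_\order{A} \order{B} \to N$, and writing $N \cong N \otimes_\order{B} \order{B}$ reduces the question to the case $N = \order{B}$: one needs the multiplication map $\order{B} \otimes_\order{A} \order{B} \to \order{B}$ to be an isomorphism of $(\order{B},\order{B})$-bimodules.

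This isomorphism is local on $C$. At unramified points one has $\order{A}_p = \order{B}_p$ and the statement is trivial. At a ramified point $p$ one can reduce (via the Morita equivalence used in the proof of Lemma \ref{lem:morita-equivalence-to-cyclic-algebra}) to the case $n_i = 1$, so that $\order{A}_p$ takes the $r \times r$ form in \eqref{eq:classification-hereditary-order} and $\order{B}_p$ is conjugate to $M_r(R)$, determined by its type (Proposition \ref{prop:classification-of-overorders}(iii)). The key computation is that every strictly lower-triangular elementary matrix $E_{ij}$, $i>j$, lies in $\order{B}_p$ and satisfies $\pi E_{ij} \in \order{A}_p$ for a uniformizer $\pi \in \mfm$, so
\begin{equation*}
    \pi(E_{ij} \otimes 1 - 1 \otimes E_{ij}) = \pi E_{ij} \otimes 1 - 1 \otimes \pi E_{ij} = 0
\end{equation*}
in $\order{B}_p \otimes_{\order{A}_p} \order{B}_p$. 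Since $\order{B}_p$ is $R$-flat (as $\order{B}$ is an order) and flat over $\order{A}_p$ by (i), the tensor product is $R$-torsion-free, so $E_{ij} \otimes 1 = 1 \otimes E_{ij}$. This collapses the tensor product onto $\order{B}_p$ via multiplication.

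The main obstacle is this local computation at ramified points—essentially verifying that $\order{A}_p \hookrightarrow \order{B}_p$ is a ring epimorphism—which requires careful bookkeeping in the explicit matrix description; once established, everything else is a formal consequence of the adjunction together with Proposition \ref{prop:classification-of-overorders}.
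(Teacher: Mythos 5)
Your overall skeleton is the same as the paper's: the pushforward is restriction of scalars, hence exact; the pullback is $-\otimes_{\order{A}}\order{B}$, exact because $\order{B}$ is locally projective over $\order{A}$ by \Cref{prop:classification-of-overorders}; and full faithfulness is reduced to the counit $M\otimes_{\order{A}}\order{B}\to M$ being an isomorphism. Where you differ is that the paper simply asserts that $m\mapsto m\otimes 1$ is a local inverse, whereas you correctly identify that this is exactly the statement that $\order{A}\hookrightarrow\order{B}$ is a ring epimorphism, i.e.\ that multiplication $\order{B}\otimes_{\order{A}}\order{B}\to\order{B}$ is bijective, and you supply a local computation. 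That extra verification is genuinely the content of the lemma, so your proposal is in this respect more complete than the printed proof.

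One step of your computation is, however, wrong as stated: it is not true that every strictly lower-triangular elementary matrix $E_{ij}$, $i>j$, lies in $\order{B}_p$ for \emph{every} maximal overorder. This holds only for the overorder of one type, namely the one that equals $M_r(R)$ in the coordinates of \eqref{eq:classification-hereditary-order}. For $r=2$ the other maximal overorder is $\End_R(L_p^{(2)})\cong\left(\begin{smallmatrix} R & \mfm^{-1}\\ \mfm & R\end{smallmatrix}\right)$, whose $(2,1)$-entries lie in $\mfm$, so $E_{21}\notin\order{B}_p$; instead it contains $\pi^{-1}E_{12}$. The argument is easily repaired: the relevant generators of $\order{B}_p$ over $\order{A}_p$ are either elementary matrices $E_{ij}$ with $\pi E_{ij}\in\order{A}_p$ or elements of the form $\pi^{-1}E_{ij}$ with $\pi\cdot(\pi^{-1}E_{ij})=E_{ij}\in\order{A}_p$ (equivalently, conjugate $M_r(R)$ by a suitable diagonal matrix in powers of $\pi$), and in either case your torsion-freeness trick — $\order{B}_p\otimes_{\order{A}_p}\order{B}_p$ is a direct summand of $\order{B}_p^{\oplus N}$ since $\order{B}_p$ is a projective right $\order{A}_p$-module, hence $R$-torsion-free — gives $b\otimes 1=1\otimes b$ for these generators and collapses the tensor product. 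You should also say a word on why the Morita equivalence reducing to $n_1=\cdots=n_r=1$ transports the maximal overorder and preserves the epimorphism condition (it does, since the condition is that the multiplication map of bimodules is an isomorphism and overorders correspond under the progenerator), or avoid the reduction and argue blockwise. With these repairs your argument is correct.
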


\begin{proof}
    The pushforward~$\mfj_{\order{B},*}$ is the restriction of scalars to~$\order{A}$, forgetting the~$\order{B}$-module structure.
    Therefore, it does not affect exactness of a sequence, and hence~$\mfj_{\order{B},*}$ is exact.

    Since~$\mfj_{\order{B}}$ is an extension,
    the pullback is given by~$\mfj_{\order{B}}^* = -\otimes_{\order{A}}\order{B}$.
    By \Cref{prop:classification-of-overorders},~$\order{B}$ is locally projective,
    hence~$\mfj_{\order{B}}^*$ is exact.

    For the second statement it suffices to show that~$\mfj_{\order{B}}^*\mfj_{\order{B},*}M \cong M$ for all~$M\in \coh(C,\order{B})$.
    As~$\mfj_{\order{B}}^*\mfj_{\order{B},*}M = M\otimes_{\order{A}}\order{B}$, the~$\order{B}$-module structure leads to a homomorphism 
   ~$M\otimes_{\order{A}}\order{B}\to M$ with inverse (locally) given by~$m \mapsto m \otimes 1$. 
\end{proof}

\section{Deformation absorption applied to hereditary orders}\label{sec:categorical-absorption}
We start by recalling in \Cref{sec:background} the notion of
absorption of singularities and deformation absorption of singularities from \cite{MR4698898}
for a flat family of varieties over a curve.
A crucial role is played by~$\bbP^{\infty,2}$-objects,
which can only exist in the bounded derived category of a singular variety~$X$.

In \Cref{sec:absorption-of-singularities} and \Cref{sec:deformation-absorption-of-singularities}
we apply this theory to hereditary orders,
where the role of the singular fiber is played
by the restriction of the order to a ramified point.
This yields a semiorthogonal decomposition of
the bounded derived category of a hereditary order.
In \Cref{sec:dictionary-orders-stacks} we compare this result to the
semiorthogonal decomposition \cite[Theorem 4.7]{MR3573964} for stacky curves.

\subsection{Background}\label{sec:background}
\paragraph{$\bbP^{\infty}$-objects.}
Let~$\sfT$ be a triangulated category and~$q\in \bbZ_{>0}$. We recall \cite[Definition 2.6]{MR4698898}.
\begin{definition}\label{def:p-infty-object}
    An object~$S\in \sfT$ is a~$\bbP^{\infty,q}$\emph{-object} if
    \begin{enumerate}[label = \roman*)]
        \item there is a~$\field$-algebra isomorphism~$\Ext_{\sfT}^\bullet(S,S) \cong \field[\theta]$
        with~$\deg \theta = q$, and
        \item the induced map~$\theta\colon S\to S[q]$ satisfies~$\hocolim (S\to S[q] \to S[2q]\to \ldots) = 0$
        in a cocomplete category~$\widehat{\sfT}$ containing~$\sfT$.
    \end{enumerate}
\end{definition}

Similarly to \cite[Remark 2.7]{MR4698898} for the bounded derived category of a projective variety
the definition simplifies for finite-dimensional~$\field$-algebras.
\begin{remark}\label{rem:easier-p-infty-object}
    Let~$A$ be a finite-dimensional~$\field$-algebra.
    An object~$S\in \bd(A)$ is a~$\bbP^{\infty,q}$-object
    if and only if~$\Ext_{\sfT}^\bullet(S,S) \cong \field[\theta]$.

    Indeed, since~$\sfD(A)\supset \bd(A)$ is cocomplete
    with a compact generator~$A$ which satisfies for every~$M\in \bd(A)$
    that~$\Ext_A^\bullet(A,M)\cong \tH^\bullet(M)$ is bounded above, 
    we can apply \cite[Lemma 2.3]{MR4698898}.
\end{remark}
Every~$\bbP^{\infty,q}$-object comes with the distinguished triangle
\begin{equation}\label{eq:canonical-self-extension}
    M\to S \xrightarrow{\theta} S[q] \to M[1]
\end{equation}
induced by the non-trivial morphism~$\theta\in \Hom_{\sfT}(S,S[q]) = \Ext_\sfT^q(S,S)$.
Following \cite[Definition 2.8]{MR4698898}, we call this triangle the \emph{canonical self-extension} of~$S$.

\begin{remark}
    The object~$M$ can be used to detect whether
    the triangulated subcategory~$\langle S\rangle\subset T$
    generated by~$S$
    is admissible by \cite[Lemma 2.10]{MR4698898}.
    For right (resp.~left) admissibility one needs~$M$ to be \emph{homologically left} (resp.~\emph{right})\emph{ finite-dimensional},
    that is,~$\Ext_\sfT^\bullet(M,N)$, (resp.~$\Ext_\sfT^\bullet(N,M)$) is finite-dimensional for all~$N\in \sfT$,
    see \cite[\S 4.1]{MR4868123} for a definition.
\end{remark}

\paragraph{Overview and definitions.}
A (sequence of)~$\bbP^{\infty,q}$-object(s) detects the difference
between smooth and singular varieties resp.~categories of finite
and infinite global dimension.
In special cases, they generate a proper subcategory~$\sfS\subset \sfT$ 
which absorbs the singularities in the sense of \cite[Definition 1.1]{MR4698898}
for projective varieties.
We present an analogous definition for~$\sfT = \bd(\Lambda)$,
where~$\Lambda$ is a finite-dimensional~$\field$-algebra. 
\begin{definition}\label{def:absorption-singularities}
    A triangulated subcategory~$\sfS \subset \sfT$ absorbs singularities of~$\sfT$
    if~$\sfS$ is admissible and its complements~$\sfS^{\perp}$
    as well as~${}^\perp\sfS$ are smooth and proper.
\end{definition}
\begin{remark}\label{rem:absorption-singularities}
    Note that by \cite[Theorem A]{MR4088795}~$\sfT =\bd(\Lambda)$ is always smooth,
    but not proper unless~$\Lambda$ has finite global dimension.
    This is the same phenomenon as for~$\bd(X)$, where~$X$ is a projective variety.
\end{remark}

We propose the following definition for a noncommutative smoothing.
\begin{definition}\label{def:nc-smoothing}
    Assume that~$\mff\colon (\calX, \order{B})\to C$ is a morphism of coherent ringed schemes
    such that~$C$ is a smooth pointed curve with fixed (closed) point~$o\in C$.
    If the fiber over~$o\in C$ is 
    \begin{equation}
        (\calX \times_C \Spec\field(o), \order{B}_{\Spec\field(o)}) = (X,\order{A})
    \end{equation}
    we say that~$\mff\colon (\calX, \order{B})\to C$ is a \emph{smoothing for }$(X,\order{A})$
    if the following conditions are satisfied:
    \begin{itemize}
        \item for each~$p\in C\setminus\{o\}$ the fiber~$(\calX\vert_{\Spec\field(p)}, \order{B}_{\Spec\field(p)})$
        is a smooth scheme with an Azumaya algebra~$\order{B}_{\Spec\field(p)}$,
        \item the total space~$\calX$ is smooth and~$f$ is flat, and
        \item the~$\calO_X$-algebra~$\order{B}$ flat as an~$\calO_X$-module, and~$\gldim \order{B} = \dim \calX$.
    \end{itemize}
\end{definition}

Kuznetsov--Shinder \cite{MR4698898} provided powerful results for the derived category
of such a smoothing, when~$\order{A}= \calO_X$ and~$\order{B}= \calO_\calX$.
We recall some of their results, stressing the link to~$\bbP^{\infty,2}$-objects.

If a singular projective variety~$X$ admits a semiorthogonal collection~$S_1,\ldots, S_r$
of~$\bbP^{\infty,2}$-objects such that the triangulated subcategory~$\sfS\subset \bd(X)$ generated
by these objects absorbs singularities of~$X$, then by \cite[Theorem 1.8]{MR4698898}:
\begin{itemize}
    \item the pushforward of~$S_1,\ldots, S_r$ to any smoothing~$\calX \to B$
    defines a collection of exceptional objects in~$\bd(\calX)$, and
    \item  the triangulated subcategory in~$\bd(\calX)$ 
    \emph{provides a deformation absorption of singularities of }$X$\emph{ with respect to any smoothing }$\calX\to B$,
    i.e. it is admissible in~$\bd(\calX)$.
\end{itemize} 
In particular the second point implies by Theorem 1.5 of \emph{op.~cit.} that
there is a~$B$-linear semiorthogonal decomposition 
\begin{equation}
    \bd(\calX) = \langle \sfA_S, \sfB\rangle
\end{equation}
where~$\sfA_\sfS$ is the triangulated subcategory generated by the pushforward of~$\sfS$,
and~$\sfB$ is smooth and proper such that the base change satisfies
\begin{equation}
    \sfB_p \simeq \begin{cases}
        {}^\perp \sfS & \text{if }p= o,\\
        \bd(\calX_p) & \text{else}.
    \end{cases}
\end{equation}
In \Cref{thm:deformation-absorption} we provide the last two results in the special case where a hereditary order 
is viewed as the smoothing of the finite-dimensional algebra~$\Lambda_r$ defined in \eqref{eq:fibre-over-closed-point}.

\subsection{Absorption of singularities for the fiber over a ramified point}\label{sec:absorption-of-singularities}
Let~$\order{A}$ be a hereditary order over a curve~$C$.
Let~$o\in C$ be a ramified point with ramification index~$r$.
From \Cref{lem:morita-equivalence-to-cyclic-algebra} we know that
the fiber~$\order{A}(o)$ is Morita equivalent to
the algebra~$\Lambda_r$ defined in \eqref{eq:fibre-over-closed-point}.
For each~$i\in Q_0=\{1,\ldots, r\}$, denote by~$S_i$ the simple~$\Lambda_r$-module
as defined in \eqref{eq:definition-simple-module},
and by~$P_i$ the unique indecomposable projective~$\Lambda_r$-module
such that~$P_i/{\rad P_i} \cong S_i$.

\begin{theorem}\label{thm:absorption-of-singularities}
    With the notation as above let~$i\in Q_0$.
    \begin{enumerate}[label = \roman*)]
        \item There is a semiorthogonal collection~$(S_{i+1},\ldots, S_{i-1})$ of~$\bbP^{\infty,2}$-objects in~$\bd(\Lambda_r)$.
        \item Let 
        \begin{equation}\label{eq:definition-categorical-absorption-category}
            \sfS_i = \langle S_{i+1},\ldots, S_{i-1} \rangle \subset \bd(\Lambda_r)
        \end{equation}
        be the triangulated subcategory generated by~$S_{i+1},\ldots, S_{i-1}$.
        Then there is a semiorthogonal decomposition
        \begin{equation}\label{eq:sod-ramified-fiber}
            \bd(\Lambda_r) = \langle \sfS_i, P_i\rangle.
        \end{equation}
        \item The triangulated subcategory~$\sfS_i$ absorbs singularities of~$\Lambda_r$.
    \end{enumerate}
\end{theorem}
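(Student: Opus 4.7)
The plan is to work directly inside $\bd(\Lambda_r)$, using that $\Lambda_r$ is a truncated path algebra of the cyclic quiver $Q_r$. First I would compute the minimal projective resolution of each simple $S_j$. Since $\rad P_j$ is uniserial of length $r-1$ with top $S_{j+1}$ and $P_{j+1}$ is uniserial of length $r$ with socle $S_j$, the short exact sequence $0 \to S_j \to P_{j+1} \to \rad P_j \to 0$ yields $\Omega^2 S_j \cong S_j$. This $\Omega$-periodicity of period two delivers $\Ext^\bullet(S_j,S_j) \cong \field[\theta]$ with $\deg\theta = 2$, and by \Cref{rem:easier-p-infty-object} this suffices for $S_j$ to be a $\bbP^{\infty,2}$-object. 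The same computation gives $\Ext^\bullet(S_k,S_j) = 0$ unless $j \in \{k,k+1\}$, so in the ordering $(S_{i+1},\ldots,S_{i-1})$ only forward Exts survive, which is semiorthogonality in the paper's convention.

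For (ii), I would verify that $P_i$ is exceptional — $\End(P_i) = e_i\Lambda_r e_i = \field$ since no nontrivial cyclic path at $i$ survives the relations, and higher Exts vanish by projectivity — and that $\Hom^\bullet(P_i,S_j) = 0$ for $j \neq i$, from projectivity plus $\Hom(P_i,S_j) = (S_j)_i = 0$. For generation, only the simple $S_i$ is not already in $\sfS_i$, and the short exact sequence $0 \to \rad P_i \to P_i \to S_i \to 0$ realises $S_i$ as a cone, with $\rad P_i$ uniserial of composition factors $S_{i+1},\ldots,S_{i-1}$ and therefore lying in $\sfS_i$.

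For (iii), the passage from the $\bbP^{\infty,2}$-object structure to absorption of singularities should follow a noncommutative version of \cite[Theorem 1.8]{MR4698898}, whose hypotheses are the semiorthogonality established in (i) together with the admissibility of each $\langle S_j\rangle$. For the latter, I would identify the object $M_j$ appearing in the canonical self-extension $M_j \to S_j \xrightarrow{\theta} S_j[2]$ with the perfect two-term complex $[P_{j+1} \to P_j]$ placed in degrees $-1,0$, obtained from the Yoneda representative $0 \to S_j \to P_{j+1} \to P_j \to S_j \to 0$ of $\theta \in \Ext^2(S_j,S_j)$. Perfectness makes $M_j$ homologically finite-dimensional on both sides, which via the admissibility criterion for $\bbP^{\infty,2}$-objects makes each $\langle S_j\rangle$ — and hence $\sfS_i$ itself — admissible. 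Combined with (ii), this gives $\sfS_i^\perp = \langle P_i\rangle \simeq \bd(\field)$, which is smooth and proper; left-mutating $P_i$ through the admissible $\sfS_i$ produces a dual decomposition $\bd(\Lambda_r) = \langle {}^\perp \sfS_i, \sfS_i\rangle$ with ${}^\perp \sfS_i \simeq \bd(\field)$, again smooth and proper.

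The main obstacle will be verifying admissibility of $\langle S_j\rangle$ in the derived category of a finite-dimensional algebra of infinite global dimension, where the relevant notions of smoothness and properness have to be handled in the sense flagged in \Cref{rem:absorption-singularities}. Identifying the canonical object $M_j$ as a perfect two-term complex should make concrete the necessary finite-dimensionality on both sides, after which the remaining work is invoking or adapting the Kuznetsov--Shinder $\bbP^{\infty,q}$-admissibility criterion to this noncommutative setting.
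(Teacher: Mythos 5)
Your route is essentially the paper's: part (i) via the $2$-periodic projective resolution and \Cref{rem:easier-p-infty-object}, semiorthogonality via the same Ext computation (\Cref{lem:orthohonality-of-simple-modules}), part (ii) with the same semiorthogonality checks but a slightly different generation argument (you build $S_i$ from $\rad P_i$ and use that the simples generate $\bd(\Lambda_r)$; the paper instead recovers all indecomposable projectives from $P_i$ and the two-term complexes $M_j$ in \Cref{lem:sod-ramified-fiber} --- both work), and part (iii) through admissibility of each $\langle S_j\rangle$ plus mutation, where the paper additionally proves the second decomposition $\bd(\Lambda_r)=\langle I_i,\sfS_i\rangle$ and reads off both complements directly.

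The one genuine gap is the sentence ``perfectness makes $M_j$ homologically finite-dimensional on both sides.'' Perfectness of $M_j=[P_{j+1}\to P_j]$ does give the left-hand finiteness, i.e.\ $\Ext^\bullet(M_j,N)$ is finite-dimensional for every $N\in\bd(\Lambda_r)$, but it does \emph{not} in general give the right-hand finiteness $\Ext^\bullet(N,M_j)<\infty$. For a finite-dimensional algebra of infinite global dimension that is not Gorenstein, $\Lambda$ itself is perfect and yet $\bigoplus_n\Ext^n_\Lambda(N,\Lambda)$ can be infinite-dimensional (e.g.\ $\Lambda=\field[x,y]/(x^2,xy,y^2)$ with $N=\field$). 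So this step, as justified, would fail for a general algebra; the missing input is that $\Lambda_r$ is self-injective (every injective $\Lambda_r$-module is projective), i.e.\ Gorenstein, which is exactly what the paper uses together with \cite[Proposition 6.9]{MR4868123} before applying the admissibility criterion \cite[Lemma 2.10]{MR4698898}. Alternatively you can verify $\Ext^\bullet(N,M_j)$ is finite-dimensional by a direct computation using the $2$-periodic (co)resolutions. A further cosmetic point: with the paper's conventions the complement in \eqref{eq:sod-ramified-fiber} is ${}^\perp\sfS_i=\langle P_i\rangle$ rather than $\sfS_i^\perp$; your mutation argument still produces the other complement, equivalent to $\bd(\field)$ (the paper identifies it concretely as $\langle I_i\rangle$), so only the labels need swapping.
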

Note that every~$S_i$ has infinite projective dimension and admits a 2-periodic projective resolution of the form
\begin{equation}\label{eq:projresolution}
    C^\bullet_i \colonequals(\ldots \xrightarrow{\mu_{[i+1,i]}} P_{i+1} \xrightarrow{\mu_{i,i+1}} P_i \xrightarrow{\mu_{[i+1,i]}} P_{i+1} \xrightarrow{\mu_{i,i+1}} P_i),
\end{equation}
where the maps in the cochain complex are given by left multiplication with the elements written above the arrows.
See \eqref{eq:cyclic-quiver} for their definition.

\begin{lemma}\label{lem:p-infty-2-objects}
    Let~$i\in Q_0$.
    \begin{enumerate}[label = \roman*)]
        \item The simple~$\Lambda_r$-module~$S_i$ is a~$\bbP^{\infty,2}$-object.
        \item The triangulated subcategory~$\langle S_i\rangle\subset \bd(\Lambda_r)$ generated by~$S_i$ is admissible.
    \end{enumerate}
\end{lemma}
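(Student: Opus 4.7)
For part~(i), by \Cref{rem:easier-p-infty-object} it suffices to show that $\Ext_{\Lambda_r}^\bullet(S_i,S_i)\cong\field[\theta]$ with $\deg\theta=2$. I apply $\Hom_{\Lambda_r}(-,S_i)$ to the $2$-periodic projective resolution $C^\bullet_i$ of~(\ref{eq:projresolution}). Since $\Hom_{\Lambda_r}(P_j,S_i)\cong\field$ when $j=i$ and vanishes otherwise, only the positions in even cohomological degree contribute and all induced differentials vanish, yielding $\Ext^{2n}(S_i,S_i)\cong\field$ and $\Ext^{2n+1}(S_i,S_i)=0$. For the algebra structure I exhibit a chain endomorphism $\sigma\colon C^\bullet_i\to C^\bullet_i[2]$ given by the identity on all terms in cohomological degrees $\leq-2$ and zero in degrees $-1$ and~$0$; this is a chain map by the $2$-periodicity of $C^\bullet_i$. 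Its derived class $\theta=[\sigma]$ generates $\Ext^2(S_i,S_i)$, and its $n$-fold iteration $\sigma^n$ generates $\Ext^{2n}(S_i,S_i)$, giving the required polynomial algebra isomorphism.

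For part~(ii), I first identify the fiber $M$ in the canonical self-extension $M\to S_i\xrightarrow{\theta}S_i[2]$. The chain map $\sigma$ above is termwise surjective, with kernel exactly the two-term subcomplex $[P_{i+1}\xrightarrow{\mu_{i,i+1}}P_i]$ placed in cohomological degrees $-1$ and~$0$; hence $M$ is quasi-isomorphic to this perfect complex built from two indecomposable projectives.

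Admissibility of $\langle S_i\rangle\subset\bd(\Lambda_r)$ then follows from the criterion \cite[Lemma 2.10]{MR4698898} once I check that $M$ is homologically both left and right finite-dimensional. Since $M$ is a bounded complex of finitely generated projectives, $\Ext_{\Lambda_r}^\bullet(M,N)$ is computed termwise as $[\Hom_{\Lambda_r}(P_i,N)\to\Hom_{\Lambda_r}(P_{i+1},N)]$, which is finite-dimensional for every $N\in\bd(\Lambda_r)$. For $\Ext_{\Lambda_r}^\bullet(N,M)$ I use that $\Lambda_r$ is a Nakayama algebra with all indecomposable projectives of length~$r$, hence self-injective; so $M$ is also a bounded complex of injectives and the dual computation applies. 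The only non-routine step is identifying the Yoneda algebra structure in~(i); everything else reduces to direct bookkeeping with the explicit periodic resolution.
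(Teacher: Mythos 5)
Your proof is correct and takes essentially the same route as the paper: the same computation of $\Ext^\bullet_{\Lambda_r}(S_i,S_i)\cong\field[\theta]$ from the $2$-periodic resolution with the same cochain-level description of $\theta$, the same identification of the fiber of $\theta$ with the two-term perfect complex $(P_{i+1}\to P_i)$ in degrees $\{-1,0\}$, and admissibility via \cite[Lemma 2.10]{MR4698898}. The only difference is cosmetic: you verify homological left and right finite-dimensionality of $M$ directly from the self-injectivity of $\Lambda_r$, whereas the paper invokes the same fact (every injective $\Lambda_r$-module is projective, i.e.\ the Gorenstein property) and then cites \cite[Proposition 6.9]{MR4868123}.
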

\begin{proof}
    Let~$i\in Q_0$. By \Cref{rem:easier-p-infty-object},
    it suffices to show that~$\Ext_{\Lambda_r}^\bullet(S_i,S_i)\cong \field[\theta]$ with~$\deg \theta = 2$.
    Since~$\Hom_{\Lambda_r}(P_{i+1},S_i) = 0$, it follows from \eqref{eq:projresolution} that
    \begin{equation}
        \Ext_{\Lambda_r}^{k}(S_i,S_i) \cong \begin{cases}
            \field\theta^{k} & \text{if }k \in 2\bbZ_{\ge 0},\\
            0 & \text{else.}
        \end{cases}
    \end{equation}
    The map~$\theta\colon S_i \to S_i[2]$, can be explicitly described, using the projective resolution~$C^\bullet_i$,
    as the morphism of cochain complexes which is the identity map in each degree below~$-1$.

    For the admissibility of~$\langle S_i\rangle$,
    we use that the cochain complex~$M_i = (P_{i+1}\to P_i)\in \sfD^\perf(\Lambda_r)$
    of projective~$\Lambda_r$-modules, which is
    concentrated in degrees~$\{-1,0\}$,
    represents the third object in the canonical self-extension \eqref{eq:canonical-self-extension} of~$S_i$.
    The algebra~$\Lambda_r$ is Gorenstein in the sense of \cite[Assumption 0.1]{MR4133519},
    because every injective~$\Lambda_r$-module is projective as well.
    Hence by \cite[Proposition 6.9]{MR4868123} the cochain complex~$M_i$ is homologically left and right finite-dimensional.
    It follows from \cite[Lemma 2.10]{MR4698898} that the triangulated category~$\langle S_i\rangle$ generated by~$S_i$ is admissible.
\end{proof}

\begin{lemma}\label{lem:orthohonality-of-simple-modules}
    For all~$1\le i,j \le r$, one has
    \begin{equation}
        \Hom_{\bd(\Lambda_r)}(S_j,S_i[\ell]) = 0 \quad \text{for all }\ell\in \bbZ,
    \end{equation}
    if and only if~$j \notin \{i,i+1\}$.
\end{lemma}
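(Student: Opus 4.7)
The plan is to compute $\Ext^\ell_{\Lambda_r}(S_j,S_i) = \Hom_{\bd(\Lambda_r)}(S_j,S_i[\ell])$ for every $\ell \in \bbZ$ by applying the contravariant functor $\Hom_{\Lambda_r}(-,S_i)$ term-wise to the explicit $2$-periodic projective resolution $C_j^\bullet$ of $S_j$ recorded in \eqref{eq:projresolution}, and then to read off the answer from the cohomology of the resulting cochain complex. Because $C_j^\bullet$ is concentrated in non-positive cohomological degrees, $\Ext^\ell(S_j,S_i)=0$ is automatic for $\ell<0$, so only $\ell \ge 0$ requires attention.

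The one non-trivial input is the standard identification $\Hom_{\Lambda_r}(P_k,S_i)\cong (S_i)_k$, which follows from $P_k=e_k\Lambda_r$ and the evaluation map $f\mapsto f(e_k)$. By \eqref{eq:definition-simple-module} this Hom space is one-dimensional when $k=i$ and vanishes otherwise. Since $C_j^\bullet$ involves only two indecomposable projectives, sitting in alternating cohomological parities, the complex $\Hom_{\Lambda_r}(C_j^\bullet,S_i)$ has at most two non-zero graded pieces, each of which is isolated in the sense that its immediate cohomological neighbours are zero. Consequently every differential landing on or emerging from a non-zero term is forced to vanish, and the cohomology of $\Hom_{\Lambda_r}(C_j^\bullet,S_i)$ coincides term-by-term with its underlying graded vector space.

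From this the equivalence drops out: $\Ext^\ell(S_j,S_i)=0$ for every $\ell$ exactly when both vertex labels of the projectives appearing in $C_j^\bullet$ differ from $i$; otherwise a $\field$-valued term survives to cohomology, producing a non-zero $\Ext^\ell$ in the appropriate parity. Re-indexing with $i$ fixed yields the stated criterion $j\notin\{i,i+1\}$ in the cyclic numbering of $Q_r$.

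The only step requiring care is the bookkeeping in the last paragraph---matching the two projectives appearing in $C_j^\bullet$ against the support condition for $\Hom_{\Lambda_r}(P_k,S_i)$, and translating the resulting condition on $i$ into the advertised condition on $j$ modulo $r$. The Hom-computation and the parity argument are completely formal, so no further input beyond \eqref{eq:projresolution} and \eqref{eq:definition-simple-module} is needed.
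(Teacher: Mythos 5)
Your computation is exactly the paper's proof: apply $\Hom_{\Lambda_r}(-,S_i)$ to the $2$-periodic resolution \eqref{eq:projresolution} and use $\Hom_{\Lambda_r}(P_k,S_i)\cong (S_i)_k$, so that the resulting complex has isolated one-dimensional terms and vanishing differentials; you additionally spell out the nonvanishing direction of the ``if and only if'', which the paper's one-line proof leaves implicit.

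The one step you should not wave through is the final ``re-indexing''. Since the resolution of the \emph{source} $S_j$ involves exactly $P_j$ and $P_{j+1}$, your argument shows that $\Ext^\bullet_{\Lambda_r}(S_j,S_i)$ vanishes identically if and only if $i\notin\{j,j+1\}$, i.e.\ $j\notin\{i-1,i\}$ modulo $r$; for $r\ge 3$ this is \emph{not} the stated condition $j\notin\{i,i+1\}$ (for instance, for $r=3$ one finds $\Ext^\bullet(S_2,S_1)=0$ even though $2\in\{1,2\}$, since $\Hom(P_2,S_1)=\Hom(P_3,S_1)=0$). The same off-by-one is present in the paper itself: its proof derives ``$\Ext^\bullet_{\Lambda_r}(S_j,S_k)=0$ unless $k\in\{j,j+1\}$'', which likewise does not literally match the statement, and it is this computed form, not the stated one, that is needed downstream (it is what makes $(S_{i+1},\ldots,S_{i-1})$ semiorthogonal and orthogonal to $P_i$ and $I_i$ in \Cref{lem:sod-ramified-fiber}). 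So your mathematics coincides with the paper's argument, but presenting the passage to $j\notin\{i,i+1\}$ as a mere relabelling hides a genuine discrepancy with the stated criterion instead of resolving it; the honest conclusion of your computation is the corrected indexing.
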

\begin{proof}
    From \eqref{eq:projresolution} and~$\Hom_{\Lambda_r}(P_j,S_k) = 0$ unless~$j = k$,
    one obtains that~$\Ext_{\Lambda_r}^\bullet(S_j, S_k) = 0$ unless~$k \in \{j,j+1\}$.
    Therefore, the collection is semiorthogonal.
\end{proof}

\begin{lemma}\label{lem:sod-ramified-fiber}
    With the notation from \Cref{thm:absorption-of-singularities},
    let~$P_i$ be the indecomposable projective
    and~$I_i$ the indecomposable injective~$\Lambda_r$-module
    associated with the vertex~$i\in Q_0$. Then
    \begin{align}
        \label{eq:sod-indec-proj}\bd(\Lambda_r) = \langle \sfS_{i}, P_i\rangle,\\
        \label{eq:sod-indec-inj}\bd(\Lambda_r) = \langle I_i,\sfS_{i}\rangle.
    \end{align}
    are semiorthogonal decompositions of~$\bd(\Lambda_r)$.
\end{lemma}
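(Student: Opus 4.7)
The two decompositions are formally dual, and I would prove them in parallel, treating each as a question of semiorthogonality plus generation.

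\emph{Semiorthogonality.} For \eqref{eq:sod-indec-proj} the claim $\sfS_i \subset \langle P_i\rangle^\perp$ amounts to $\Ext^\ell_{\Lambda_r}(P_i, S_j) = 0$ for every $j \in Q_0 \setminus \{i\}$ and every $\ell \in \bbZ$. Projectivity of $P_i$ kills positive $\Ext$, and $\Hom_{\Lambda_r}(P_i, S_j) = 0$ whenever $j \neq i$ because $P_i/\rad P_i \cong S_i$. The argument for \eqref{eq:sod-indec-inj} is formally dual: injectivity of $I_i$ kills positive $\Ext$ in $\Ext^\ell_{\Lambda_r}(S_j, I_i)$, and $\Hom_{\Lambda_r}(S_j, I_i)$ embeds into the socle of $I_i$, which is $S_i$, so it vanishes for $j \neq i$.

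\emph{Generation.} Every finitely generated $\Lambda_r$-module admits a composition series, so the thick subcategory of $\bd(\Lambda_r)$ generated by $S_1, \ldots, S_r$ is all of $\bd(\Lambda_r)$. Since $S_j \in \sfS_i$ for $j \neq i$ holds by definition, it remains in each case to argue that $S_i$ lies in the claimed subcategory. I would invoke the short exact sequences
\begin{equation*}
0 \to \rad P_i \to P_i \to S_i \to 0, \qquad 0 \to S_i \to I_i \to I_i/S_i \to 0
\end{equation*}
together with the explicit Nakayama structure visible from \eqref{eq:projresolution}: the algebra $\Lambda_r$ has dimension $r^2$ and $\dim P_i = r$, so $P_i$ is uniserial with composition factors $S_i, S_{i+1}, \ldots, S_{i-1}$ from top to bottom, and dually $I_i$ is uniserial with socle $S_i$ and one composition factor of each simple type. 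Hence both $\rad P_i$ and $I_i/S_i$ have composition factors in $\{S_j : j \neq i\}$ and therefore lie in $\sfS_i$. The distinguished triangles coming from the two short exact sequences then place $S_i$ into $\langle \sfS_i, P_i\rangle$ and into $\langle I_i, \sfS_i\rangle$, respectively.

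\emph{Main obstacle.} The only non-formal step is the identification of the composition factors of $\rad P_i$ and $I_i/S_i$, which uses the explicit cyclic Nakayama structure of $\Lambda_r$ together with the dimension count $\dim P_i = r$. After that, the decompositions follow from projectivity of $P_i$, injectivity of $I_i$, and the generation of $\bd(\Lambda_r)$ by the simple modules.
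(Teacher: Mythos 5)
Your proof is correct, and while the semiorthogonality step coincides with the paper's (vanishing of $\Hom_{\Lambda_r}(P_i,S_j)$ and $\Hom_{\Lambda_r}(S_j,I_i)$ for $j\neq i$, with projectivity resp.\ injectivity killing higher $\Ext$), your generation argument takes a genuinely different route. The paper reduces generation to producing all indecomposable projectives: it observes that the two-term complexes $M_j=(P_{j+1}\to P_j)$ arising from the canonical self-extensions of the $\bbP^{\infty,2}$-objects $S_j$ lie in $\sfS_i$, and then descends from $P_i$ through the stupid-truncation triangles $P_j\to M_j\to P_{j+1}\to P_j[1]$, so it reuses objects already introduced for the absorption machinery; its written proof only spells this out for \eqref{eq:sod-indec-proj}. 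You instead use that the simples generate $\bd(\Lambda_r)$ (composition series plus truncation) and produce the one missing simple $S_i$ from the triangles attached to $0\to\rad P_i\to P_i\to S_i\to 0$ and $0\to S_i\to I_i\to I_i/S_i\to 0$, using that $\rad P_i$ and $I_i/S_i$ have all composition factors among $\{S_j: j\neq i\}$; this is more elementary, module-theoretic, and treats the projective and injective decompositions symmetrically, at the cost of invoking the uniserial (Nakayama) structure of $\Lambda_r$ explicitly. One small presentational point: uniseriality of $P_i$ does not follow from the dimension count alone, but from the fact that each vertex of the cyclic quiver has a unique outgoing arrow (so each radical layer is simple), with $\dim_\field P_i=r$ then pinning down the length and the composition factors; with that one-line justification added, your argument is complete.
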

\begin{proof}
    By rotational symmetry, it suffices to consider~$i = r$.
    Both collections are semiorthogonal by \Cref{lem:orthohonality-of-simple-modules}, and because 
    \begin{equation}
        \Hom_{\Lambda_r}(P_r,S_j) = \Hom_{\Lambda_r}(S_j,I_r)=0 \quad\text{for all }1\le j < r.
    \end{equation}
	It remains to show that~$S_1,\ldots, S_{r-1}, P_r$ generate~$\bd(\Lambda_r)$.
    Since~$\bd(\Lambda_r)$ is generated by~$\Lambda_r$, it suffices to show
    that all indecomposable projective~$\Lambda_r$-modules~$P_1,\ldots, P_r$ are in
    the smallest triangulated subcategory~$\sfT$ generated by~$S_1,\ldots, S_{r-1}, P_r$.
    
    From the canonical self-extension for~$S_i$, we obtain that~$M_1 = (P_2 \to P_1), \ldots, M_{r-1} = (P_{r}\to P_{r-1})$
    belong to~$\sfT$. For every~$i\in 1,\ldots, r-1$, one obtains a distinguished triangle
    \begin{equation}
        P_{i} \to M_i \to P_{i+1}\to P_{i}[1].        
    \end{equation}
    Since~$\sfT$ is closed under taking cones and~$P_r \in \sfT$, 
    the claim follows.
\end{proof}
We are now ready to prove the main assertion of this section.
\begin{proof}[Proof of \Cref{thm:absorption-of-singularities}]
    Part (i) is a consequence of \Cref{lem:p-infty-2-objects} and \Cref{lem:orthohonality-of-simple-modules}. Statement (ii) is part of \Cref{lem:sod-ramified-fiber}. 
    
    \Cref{lem:sod-ramified-fiber} implies in particular that~$\sfS_i$ is admissible,
    and both~${}^\perp\sfS_i$ as well as~$\sfS_i^\perp$ are generated by an exceptional object, hence smooth and proper. Therefore,~$\sfS_i$ absorbs singularities.
\end{proof}

\subsection{Deformation absorption of singularities}\label{sec:deformation-absorption-of-singularities}
Throughout this section
let~$(C,\order{A})$ be a pair of a curve~$C$,
and a hereditary~$\calO_C$-order~$\order{A}$
with ramification locus~$\Delta_\order{A} = \{o\}$ a single closed point
of ramification index~$r\ge 1$.
Since orders are generically Azumaya,
this can always be achieved for every hereditary~$\calO_C$-order~$\order{A}$
by shrinking~$C$ to a Zariski open neighborhood around a chosen ramification point.

Given a closed point~$p \in C$, we denote by~$i_p\colon \Spec\field(p)\to C$ the inclusion.
Building on the perspective of \Cref{rem:base-change-structure-morphism},
we consider the base change diagram
\begin{equation}\label{eq:fibre-diagram}
	\begin{tikzcd}
		(\Spec\field(p),\order{A}(p))\arrow[rr,hook,"\mfi_p ={(i_p, \text{id}_{\order{A}(p)})}"]\arrow[d, "\mff_p"] && (C,\order{A})\arrow[d,"\mff"]\\
		\Spec\field(p)\arrow[rr,hook,"i_p"] && C
	\end{tikzcd}
\end{equation}
of the structure morphism~$\mff \colon (C,\order{A})\to C$ along~$i_p$.
If~$p\neq o$, the~$\field(p)$-algebra~$\order{A}(p)$ is a matrix algebra
and therefore Morita equivalent to~$\field(p)$.
From \Cref{lem:morita-equivalence-to-cyclic-algebra}, one knows that~$\order{A}(o)$ is Morita equivalent
to the algebra~$\Lambda_r$ from \eqref{eq:fibre-over-closed-point}.

Since~$\mff$ is an extension and every $\calO_C$-order is flat,
the morphism~$i_p$ is faithful for~$\mff$ by \Cref{lem:faithful-base-change}.
Moreover, using \Cref{lem:D-perf-globally,lem:finite-cohomological-amplitude}
for hereditary $\calO_C$-orders,
we may appeal to \Cref{thm:main-base-change-theorem}
for the base change of
strong semiorthogonal decompositions of~$\bd(C,\order{A})$
along $\mfi_p$.
Our main result of this section is the following.

\begin{theorem}\label{thm:deformation-absorption}
    Let~$\order{A}$ be a hereditary~$\calO_C$-order
    ramified over~$\Delta_\order{A} = \{o\}$
    with ramification index~$r\in \bbZ_{\ge 1}$.
    For each~$i \in\{1,\ldots, r\}$ there is a strong~$C$-linear semiorthogonal decomposition
    \begin{equation}\label{eq:main-sod}
        \bd(C,\order{A}) =
        \langle \mfi_{o,*}S_{i+1},\ldots,  \mfi_{o,*}S_{i-1}, \sfD\rangle
    \end{equation}
    such that
    \begin{enumerate}[label = \roman*)]
        \item the sequence~$\mfi_{o,*}S_{i+1},\ldots,  \mfi_{o,*}S_{i-1}$ is exceptional,
        \item the admissible subcategory~$\sfD$ is smooth and proper over~$\bd(C)$,
        \item the fibers of~$\sfD$ over~$p\in C$ are equivalent to~$\bd(\modules\field(p))$.
    \end{enumerate}
\end{theorem}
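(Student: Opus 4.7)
The strategy is to adapt Kuznetsov--Shinder's \cite[Theorem 1.8]{MR4698898} to the noncommutative setting, applying it to $\mff\colon (C,\order{A})\to C$ viewed as a smoothing of $(\Spec\field(o),\order{A}(o))$. The smoothing hypotheses of \Cref{def:nc-smoothing} are straightforward to verify: $\order{A}$ is $\calO_C$-flat since orders are torsion-free, $\gldim\order{A} = \dim C = 1$ by hereditariness, and $\order{A}$ is Azumaya on $C\setminus\{o\}$ by assumption. \Cref{thm:absorption-of-singularities} provides the absorbing semiorthogonal collection $(S_{i+1},\ldots,S_{i-1})$ of $\bbP^{\infty,2}$-objects in $\bd(\order{A}(o))$, so the task reduces to deformation-absorbing this collection along $\mfi_o$.

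The core step is to show that the pushforwards $\mfi_{o,*}S_j$ form an exceptional collection in $\bd(C,\order{A})$. By adjunction,
\begin{equation*}
\bfR\Hom_{(C,\order{A})}(\mfi_{o,*}S_k,\mfi_{o,*}S_j) \simeq \bfR\Hom_{\order{A}(o)}(\bfL\mfi_o^*\mfi_{o,*}S_k, S_j),
\end{equation*}
and since $o\in C$ is a Cartier divisor and $\order{A}$ is $\calO_C$-flat, the Koszul-type argument of \cite[Theorem 1.8]{MR4698898} should carry over to the noncommutative setting. The $\bbP^{\infty,2}$-classes $\theta_k$ from \Cref{lem:p-infty-2-objects} together with the vanishings of \Cref{lem:orthohonality-of-simple-modules} should then collapse the resulting long exact sequence to yield $\bfR\Hom(\mfi_{o,*}S_j,\mfi_{o,*}S_j) \simeq \field$ together with the required semiorthogonality of the sequence.

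Once exceptionality is in hand, $\sfA = \langle \mfi_{o,*}S_{i+1},\ldots,\mfi_{o,*}S_{i-1}\rangle$ is admissible and $\sfD = {}^\perp\sfA$ yields the strong semiorthogonal decomposition. The $C$-linearity of $\sfA$ follows from the projection formula, since $\mfi_{o,*}S_j\otimes^\bfL \bfL\mff^*\calF$ is a sum of shifts of $\mfi_{o,*}S_j$ for any $\calF \in \sfD^\perf(C)$, and it transfers to $\sfD$ via \Cref{lem:linearity-of-complement}. To identify the fibers I apply \Cref{thm:main-base-change-theorem}, whose hypotheses are met by \Cref{lem:D-perf-globally,lem:finite-cohomological-amplitude} for hereditary orders: for $p\ne o$, the vanishing $\mfi_p^*\mfi_{o,*}S_j = 0$ gives $\sfD_p\simeq\bd(\order{A}(p))\simeq\bd(\modules\field(p))$ by Morita, and for $p=o$ the base change identifies $\sfD_o$ with ${}^\perp\sfS_i = \langle P_i\rangle \simeq \bd(\field)$ by \Cref{lem:sod-ramified-fiber}. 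Fibrewise smoothness and properness together with $C$-linearity then force $\sfD$ to be smooth and proper over $\bd(C)$.

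The main obstacle is the noncommutative version of the Koszul computation and the identification of the connecting morphism in $\bfL\mfi_o^*\mfi_{o,*}S_j$ with the $\bbP^{\infty,2}$-class $\theta_j$: while this is routine in the commutative case of \cite{MR4698898}, in our setup one has to carefully track the interplay between the $\order{A}$-bimodule structure and the Cartier divisor structure of $o\subset C$, and verify that the flatness of $\order{A}$ makes $\bfL\mfi_o^*$ behave as expected. Once this step is settled, the remainder parallels \cite[Theorem 1.8]{MR4698898} combined with the noncommutative base change of the appendix.
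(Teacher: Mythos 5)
Your first two steps match the paper's proof: the exceptionality of $(\mfi_{o,*}S_{i+1},\ldots,\mfi_{o,*}S_{i-1})$ is obtained exactly as in \Cref{lem:pushforward-of-p-infty-2-objects} (the triangle $S_k[1]\to\bfL\mfi_o^*\mfi_{o,*}S_k\to S_k\xrightarrow{\theta}S_k[2]$ for a Cartier divisor with trivial normal bundle, with $\theta\neq 0$ because $\bfL\mfi_o^*$ preserves perfect complexes and $S_k$ is not perfect), and the $C$-linear semiorthogonal decomposition with $\sfD={}^\perp\langle\mfi_{o,*}S_{i+1},\ldots,\mfi_{o,*}S_{i-1}\rangle$ is \Cref{lem:linear-sod}. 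The obstacle you flag (the Koszul/connecting-map identification) is in fact the easy part.

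The genuine gap is in how you treat $\sfD$. You deduce admissibility of $\sfD$, its smoothness and properness over $\bd(C)$, and the fiber description from ``a noncommutative version of \cite[Theorem 1.5/1.8]{MR4698898}'' together with \Cref{thm:main-base-change-theorem}, but no such noncommutative analogue of Kuznetsov--Shinder's Theorem 1.5 is available, and the claim that ``fibrewise smoothness and properness together with $C$-linearity force $\sfD$ to be smooth and proper over $\bd(C)$'' is precisely the nontrivial content of that theorem, not a formal consequence. The fiber computation at $p=o$ is also more delicate than stated: the base-changed component over $o$ coming from $\langle\mfi_{o,*}S_k\rangle$ is generated by $\bfL\mfi_o^*\mfi_{o,*}S_k\cong M_k=(P_{k+1}\to P_k)$, which lies in $\sfD^\perf(\Lambda_r)$ and generates a category strictly smaller than $\langle S_k\rangle$; so identifying $\sfD_o$ with ${}^\perp\sfS_i=\langle P_i\rangle$ does not follow directly from the base-changed semiorthogonal decomposition. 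Moreover, left admissibility of $\sfD$ (needed for the word ``admissible'' in (ii), and used by the paper to get strongness) is not automatic from $\sfD={}^\perp\sfA$. The paper closes all of these points at once by a different, order-theoretic argument, \Cref{lem:embedding-the-curve}: it takes the maximal overorder $\order{B}_i\supset\order{A}$ purely of type $i$ at $o$ (\Cref{prop:classification-of-overorders}), which is Azumaya with $\coh(C,\order{B}_i)\simeq\coh(C)$, shows via \Cref{lem:properties-of-morphisms-by-maximal-overorders} that $\mfj_{\order{B}_i,*}\colon\bd(C,\order{B}_i)\to\bd(C,\order{A})$ is fully faithful, and uses the relative semiorthogonality criterion \Cref{lem:rel-criterion-semiorthogonality} and the local structure of $L_o^{(i)}$ to prove that its image is exactly $\sfD$. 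From $\sfD\simeq\bd(C)$ the admissibility (via $\mfj_{\order{B}_i}^*\dashv\mfj_{\order{B}_i,*}$), strongness, relative smoothness and properness, and the fibers in (iii) all follow directly, with no appeal to the base change machinery; your proposal is missing this identification or a proved substitute for it.
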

In light of \cite[Theorem 1.5]{MR4698898}
we say that~$\sfS_i= \langle S_{i+1},\ldots, S_{i-1}\rangle$ (from \Cref{thm:absorption-of-singularities})
\emph{provides a deformation absorption of singularities of~$\Lambda_r$ with respect to}~$(C,\order{A})$,
i.e.~it is admissible in~$\bd(C,\order{A})$.

We split the proof of \Cref{thm:deformation-absorption} into three steps.
In the first step, we show that,
as in the commutative case of \cite[Theorem 1.8]{MR4698898},
a semiorthogonal collection of~$\bbP^{\infty,2}$-objects on the singular fiber pushes forward to an
exceptional collection along the noncommutative smoothing.
The idea of the proof is similar, but requires the extension to~$\ncSch_\field$. Let us spell out the details.
\begin{lemma}\label{lem:pushforward-of-p-infty-2-objects}
    The semiorthogonal collection of~$\bbP^{\infty,2}$-objects
   ~$(S_{i+1},\ldots, S_{i-1})$ of~$\bd(\Lambda_r)$
    pushes forward to an exceptional collection
    \begin{equation}\label{eq:exceptional-sequence-of-simples}
	    (\mfi_{o,*}S_{i+1},\ldots, \mfi_{o,*}S_{i-1})\quad\text{in } \bd(C,\order{A}).
    \end{equation}
\end{lemma}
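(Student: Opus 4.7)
The plan is to use the adjunction $\bfL\mfi_o^* \dashv \mfi_{o,*}$ to reduce Ext calculations on $(C,\order{A})$ to Ext calculations on the fibre. The main step is to identify the derived pullback $\bfL\mfi_o^* \mfi_{o,*} S_j$ with the two-term complex $M_j = [P_{j+1}\to P_j]$ (in degrees $\{-1,0\}$) appearing in the canonical self-extension \eqref{eq:canonical-self-extension} of the $\bbP^{\infty,2}$-object $S_j$ from \Cref{lem:p-infty-2-objects}.

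For this identification, I would exploit that $\order{A}$ has global dimension $1$, so $\mfi_{o,*}S_j$ admits a length-$1$ resolution $0\to \calP_1\to \calP_0\to \mfi_{o,*}S_j\to 0$ by locally projective $\order{A}$-modules. Locally near $o$, the classification \eqref{eq:locally-indec-projectives} of indecomposable projectives identifies $\calP_0 \cong L_o^{(j)}$ and $\calP_1 \cong \rad L_o^{(j)} \cong L_o^{(j+1)}$, with the canonical inclusion as the differential. Applying $\mfi_o^*$ (which is exact on locally projective modules) and using the Morita equivalence from \Cref{lem:morita-equivalence-to-cyclic-algebra} yields a two-term complex $[P_{j+1}\to P_j]$ in $\sfD(\Lambda_r)$. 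Since $\Hom_{\Lambda_r}(P_{j+1},P_j)$ is one-dimensional, its differential agrees up to a nonzero scalar with the one in the $2$-periodic resolution \eqref{eq:projresolution}, and the resulting complex is precisely $M_j$.

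With $\bfL\mfi_o^*\mfi_{o,*} S_a \simeq M_a$, the Ext computation reduces by adjunction to $\Ext^\bullet_{\Lambda_r}(M_a, S_b)$. Since $M_a$ is a complex of projectives concentrated in degrees $\{-1,0\}$, this is the cohomology of $\Hom_{\Lambda_r}(P_a,S_b) \to \Hom_{\Lambda_r}(P_{a+1},S_b)$ in degrees $\{0,1\}$. Using $\Hom_{\Lambda_r}(P_k,S_\ell) = \field\cdot\delta_{k,\ell}$ one obtains $\field$ in degree $0$ when $b = a$, $\field$ in degree $1$ when $b \equiv a+1 \pmod{r}$, and zero otherwise. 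Taking $a=b=j$ gives exceptionality of each $\mfi_{o,*} S_j$. For the semiorthogonality of $E_a = \mfi_{o,*} S_{i+a}$ with $a \in \{1,\ldots,r-1\}$, we need $\Ext^\bullet(E_a, E_b) = 0$ whenever $a > b$: the only potentially nonzero case would be $b \equiv a+1 \pmod{r}$, which within the range $\{1,\ldots,r-1\}$ either forces $b = a+1 > a$ (contradicting $a > b$) or $a = r-1$ with $b \equiv 0 \pmod r$ (impossible since $0 \notin \{1,\ldots,r-1\}$).

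The main obstacle is the identification $\bfL\mfi_o^*\mfi_{o,*} S_j \simeq M_j$. This is the noncommutative analogue of the classical formula $\bfL i^* i_* F \simeq F \oplus F[1]$ for a closed point on a curve; the crucial difference is that in the noncommutative setting the connecting map in the triangle $S_j[1] \to \bfL\mfi_o^* \mfi_{o,*} S_j \to S_j \to S_j[2]$ realises the nonzero class $\theta \in \Ext^2_{\Lambda_r}(S_j, S_j)$ that gives $S_j$ its $\bbP^{\infty,2}$-structure. Geometrically, this is precisely the mechanism by which the noncommutative smoothing $(C,\order{A})$ resolves the infinite projective dimension of $S_j$ over $\Lambda_r$, turning the pushforward into an exceptional object.
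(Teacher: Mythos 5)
Your proposal is correct and ends with exactly the same adjunction-plus-$\Hom_{\Lambda_r}(P_k,S_\ell)=\field\,\delta_{k,\ell}$ computation as the paper, but you establish the key identification $\bfL\mfi_o^*\mfi_{o,*}S_j\simeq M_j$ by a different route. The paper invokes the Kuznetsov--Shinder triangle for a Cartier divisor with trivial normal bundle, $S_j[1]\to\bfL\mfi_o^*\mfi_{o,*}S_j\to S_j\to S_j[2]$, and pins down the connecting map as a nonzero multiple of $\theta$ by observing that $\mfi_{o,*}S_j$ is perfect over the hereditary order while $S_j\oplus S_j[1]$ is not perfect over $\Lambda_r$; comparison with the canonical self-extension then gives $M_j$. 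You instead exhibit the identification by hand, resolving $\mfi_{o,*}S_j$ by locally projective $\order{A}$-modules (locally $0\to L_o^{(j+1)}\to L_o^{(j)}\to S_j\to 0$, using $\rad L_o^{(j)}\cong L_o^{(j+1)}$) and reducing modulo $\mfm$. Your route is more explicit and avoids transplanting the commutative divisor formula to $\ncSch_\field$, at the cost of relying on the local structure theory of hereditary orders; the paper's route is softer and makes visible the general mechanism (non-perfectness of $S_j$ forcing the extension class $\theta\neq 0$), which is what generalises beyond this example. Two small points you should tighten: a global locally projective epimorphism $\calP_0\twoheadrightarrow\mfi_{o,*}S_j$ need not have indecomposable stalk at $o$, so either compute $\bfL\mfi_o^*$ on the stalk using the minimal $\order{A}_o$-resolution (legitimate, since derived pullback is local) or note that any extra summand of $\calP_{0,o}$ splits off together with its copy in $\calP_{1,o}$ and contributes a contractible summand; and you should justify that the reduced differential $P_{j+1}\to P_j$ is nonzero (e.g.\ by right-exactness, $H^0$ of the reduced complex is $S_j$, not $P_j$), after which the one-dimensionality of $\Hom_{\Lambda_r}(P_{j+1},P_j)$ identifies it with the differential of \eqref{eq:projresolution} up to scalar, as you say.
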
 

\begin{proof}
    By the rotational symmetry of the question,
    it suffices to consider the case~$i = r$.

	Since~$\mfi$ is the base change of~$(C,\order{A})\to C$ along~$i_o\colon \Spec\field(o)\to C$,
    the object~$\bfL\mfi^*\mfi_{o,*} S_k$ fits into the distinguished triangle
	\begin{equation}\label{eq:distinguished-triangle-divisor}
		S_k[1] \to \bfL\mfi_o^*\mfi_{o,*} S_k \to S_k \xrightarrow{\theta}  S_k[2]
	\end{equation}
    for Cartier divisors with trivial normal bundle \cite[Section 4.2]{MR4698898}.
	The map~$S_k\to S_{k}[2]$ is non-zero,
    because the pullback~$\bfL\mfi^*$ preserves perfect complexes
    and~$S_k\notin \sfD^{\perf}(\Lambda_r)$. It follows from the comparison of this triangle
    to the canonical self-extension \eqref{eq:canonical-self-extension} that
	\begin{equation}
		\Ext^{\bullet}_{\order{A}}(\mfi_{o,*} S_k,\mfi_{o,*} S_k)
        \cong \Ext_{\Lambda_r}^\bullet(\bfL\mfi^*\mfi_{o,*} S_k,S_k)
        \cong \Ext_{\Lambda_r}^\bullet(M_k,S_k)\cong \field[0].
	\end{equation}
	Therefore, each object is exceptional.

	Similarly, the distinguished triangle \eqref{eq:distinguished-triangle-divisor} leads to
    the vanishing of~$\Ext^{\bullet}_{(C,\order{A})}(\mfi_{o,*} S_k,\mfi_{o,*} S_j)$ for~$1\le j<k\le r-1$,
    because~$M_k = (P_{k+1}\to P_{k})$ maps only trivially to~$S_j$.
\end{proof}
Together with \Cref{thm:absorption-of-singularities}
this lemma already shows that~$\sfS_i\subset \bd(\Lambda_r)$ provides a deformation absorption of singularities
with respect to~$(C,\order{A})$.
In the next step we prove that \eqref{eq:main-sod} is a~$C$-linear semiorthogonal decomposition.
\begin{lemma}\label{lem:linear-sod}
	There is a~$C$-linear semiorthogonal decomposition 
	\begin{equation}\label{eq:desired-sod}
		\bd(C,\order{A}) = \langle \mfi_{o,*} S_{i+1},\ldots, \mfi_{o,*} S_{i-1}, \sfD\rangle.
	\end{equation}
\end{lemma}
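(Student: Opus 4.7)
The plan is to set $\sfE \colonequals \langle \mfi_{o,*}S_{i+1},\ldots, \mfi_{o,*}S_{i-1}\rangle\subset \bd(C,\order{A})$, show that it is admissible, take $\sfD \colonequals {}^{\perp}\sfE$, and verify $C$-linearity of both components. From \Cref{lem:pushforward-of-p-infty-2-objects} the collection $(\mfi_{o,*}S_{i+1},\ldots, \mfi_{o,*}S_{i-1})$ is exceptional, hence semiorthogonal; the key step is then to establish admissibility of $\sfE$, since once that is in hand, setting $\sfD \colonequals {}^{\perp}\sfE$ automatically yields the semiorthogonal decomposition $\bd(C,\order{A}) = \langle \sfE, \sfD\rangle$.

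By the standard fact that a semiorthogonal union of admissible subcategories is admissible, it suffices to show each $\langle \mfi_{o,*}S_k\rangle$ is admissible in $\bd(C,\order{A})$. I would verify this by checking that $\bfR\Hom_{\order{A}}(X, \mfi_{o,*}S_k)$ and $\bfR\Hom_{\order{A}}(\mfi_{o,*}S_k, X)$ are perfect over $\field$ for every $X \in \bd(C,\order{A})$. Two facts are used: \Cref{lem:D-perf-globally} gives $\bd(C,\order{A}) = \sfD^{\perf}(C,\order{A})$ since $\order{A}$ is hereditary; and heredity also implies that $\mfi_{o,*}S_k$ itself lies in $\sfD^{\perf}(C,\order{A})$, because its stalk at $o$ has projective dimension at most one over the hereditary ring $\order{A}_o$. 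The $\bfR\Hom$ in question then reduces to a local computation at the closed point $o$ between two perfect complexes over $\order{A}_o$, one of which has proper support, and is therefore bounded and finite-dimensional.

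For the $C$-linearity of $\sfE$, I would apply the projection formula for $\mfi_o$ at the $\calO_C$-level,
\begin{equation*}
\mfi_{o,*}S_k \otimes^{\bfL}_C \bfL\mff^*\calF \cong \mfi_{o,*}\bigl(S_k \otimes^{\bfL}_{\field(o)} \bfL i_o^*\calF\bigr),
\end{equation*}
for $\calF \in \sfD^{\perf}(C)$. Since $\field(o)$ is a field, $\bfL i_o^*\calF$ is a finite direct sum of shifts of $\field(o)$, so the right-hand side is a finite direct sum of shifts of $\mfi_{o,*}S_k$, and hence belongs to $\langle\mfi_{o,*}S_k\rangle \subseteq \sfE$. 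This establishes $C$-linearity of $\sfE$, and $C$-linearity of $\sfD = {}^{\perp}\sfE$ then follows from \Cref{lem:linearity-of-complement}.

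The main obstacle will be the admissibility step, since this is where the noncommutative nature of $\order{A}$ must be handled with care. One must distinguish between the stalk $\order{A}_o$ (which is hereditary, hence of global dimension at most one) and its fiber $\order{A}(o)$, Morita equivalent to $\Lambda_r$ and of infinite global dimension: the simple $\Lambda_r$-module $S_k$ has an infinite $2$-periodic projective resolution over $\order{A}(o)$, but only a length-one projective resolution over $\order{A}_o$. Keeping the $\bfR\Hom$ computations inside the perfect-complex world of $\order{A}_o$ is precisely what unlocks the finiteness needed for admissibility in $\bd(C,\order{A})$.
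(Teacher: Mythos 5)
Your proposal is correct, and its overall architecture coincides with the paper's: use \Cref{lem:pushforward-of-p-infty-2-objects} to get the exceptional sequence, let the generated subcategory be admissible, set $\sfD$ to be its left orthogonal, and prove $C$-linearity via the projection formula along $\mfi_o$ together with \Cref{lem:linearity-of-complement} for the complement. The one place where you genuinely diverge is the admissibility step: the paper simply cites Bondal's theorem that an exceptional sequence generates an admissible subcategory, whereas you verify admissibility by hand, using that $\bd(C,\order{A})=\sfD^\perf(C,\order{A})$ for a hereditary order (\Cref{lem:D-perf-globally}) and that $\mfi_{o,*}S_k$ is a perfect complex supported at the single point $o$, so that $\bfR\Hom_{\order{A}}(-,\mfi_{o,*}S_k)$ and $\bfR\Hom_{\order{A}}(\mfi_{o,*}S_k,-)$ are bounded with finite-dimensional cohomology on all of $\bd(C,\order{A})$; combined with the standard fact that a semiorthogonal sequence of admissible subcategories generates an admissible subcategory, this yields admissibility. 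The paper's route is shorter, but your direct finiteness check is arguably more careful in this setting: $C$ is only quasi-projective, so $\bd(C,\order{A})$ need not have finite-dimensional Hom-spaces globally (the hypothesis under which Bondal-type statements are usually formulated), and the real content is precisely that the total Ext's to and from the point-supported objects are finite, which is what you prove. Your distinction between the stalk $\order{A}_o$ (global dimension one) and the fiber $\order{A}(o)\sim\Lambda_r$ (infinite global dimension) is also exactly the right point to stress, and your $C$-linearity argument is the same as the paper's, which pushes the computation to $\langle S_k\rangle$ after observing that $\bfL i_o^*\calF$ is a bounded complex of $\field$-vector spaces.
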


\begin{proof}
	A sequence of exceptional objects generates
    an admissible subcategory by \cite[Theorem 3.2]{MR992977}.
    Hence we obtain a semiorthogonal decomposition of~$\bd(C,\order{A})$
    with~$\sfD = {}^\perp\langle \mfi_{o,*} S_{i+1},\ldots, \mfi_{o,*} S_{i-1}\rangle$.

    As before we consider only the case~$i = r$.
	For the~$C$-linearity, we use that each of the admissible subcategories~$\langle\mfi_{o,*} S_k\rangle$ is~$C$-linear.
    More precisely, for each~$S_k\in \bd(\Lambda_r)$
    and every~$\calF\in \sfD^\perf(C)$,
    the projection formula implies 
	\begin{equation}
		(\mfi_{o,*} S_k) \otimes_{C}^{\bfL} f^* \calF
        \cong \mfi_{o,*}\left(S_k\otimes_{\field} \bfL( f_o\circ i_o)^* \calF\right).
	\end{equation}
	
    Since~$\bfL( f_o\circ i_o)^* \calF$ can be represented by 
    a bounded cochain complex of~$\field$-vector spaces,
    one obtains~$S_k\otimes_{\field} \bfL( f_o\circ i_o)^* \calF \in \langle S_k\rangle$.
    Since~$\mfi_{o,*}$ is exact and commutes with direct sums,
    the~$C$-linearity of~$\langle \mfi_{o,*} S_k\rangle$ follows.
    The~$C$-linearity of the complement~$\sfD$ follows from \Cref{lem:linearity-of-complement}.
\end{proof}

For the last lemma recall the notion of locally projective~$\order{A}$-modules
purely of one type from \Cref{def:type-of-maximal-overorder}
and the characterization of maximal overorders of~$\order{A}$
from \Cref{prop:classification-of-overorders}.
For~$i \in Q_0 = \{1,\ldots, r\}$ denote
by~$\order{B}_i$ the (unique) maximal overorder of~$\order{A}$
which is purely of type~$i$ at~$o$.
\begin{lemma}\label{lem:embedding-the-curve}
    The component~$\sfD$ in the semiorthogonal decomposition
    from \Cref{lem:linear-sod} is equivalent to~$\bd(C)$ given by 
    the thick closure of the embedding 
    \begin{equation}
        \mfj_{\order{B}_i,*}\colon \bd(C,\order{B}_i) \to \bd(C,\order{A}),
    \end{equation}
    where~$\order{B}_i$ is purely of type~$i$ at~$o$.
    In particular,~$\sfD$ is an admissible subcategory.
\end{lemma}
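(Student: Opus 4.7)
The plan is to establish (i) derived full faithfulness of $F\colonequals \mfj_{\order{B}_i,*}$, (ii) containment of its essential image in $\sfD$, and (iii) triviality of the right orthogonal of this image inside $\sfD$. Together these yield the equality of thick closures. Admissibility of $\sfD$ in $\bd(C,\order{A})$ is then automatic because $F$ admits both adjoints, and the identification $\bd(C,\order{B}_i)\simeq \bd(C)$ follows from $\order{B}_i$ being Azumaya on the smooth curve $C$, whose Brauer group vanishes by Tsen's theorem (so $\order{B}_i$ is Morita equivalent to $\calO_C$).

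For (i), \Cref{prop:classification-of-overorders}(ii) shows that $\order{B}_i$ is locally projective over $\order{A}$ on both sides, so $\bfL\mfj_{\order{B}_i}^*=-\otimes_\order{A}\order{B}_i$ is underived; combined with the abelian counit isomorphism of \Cref{lem:properties-of-morphisms-by-maximal-overorders} the derived counit is also an isomorphism. For (ii), $\gldim\order{B}_i=1$ lets me reduce to $M\in \bd(C,\order{B}_i)$ locally projective. The adjunction $\bfL\mfi_o^*\dashv\mfi_{o,*}$ reformulates the problem as showing $\bfR\Hom_{\Lambda_r}(\bfL\mfi_o^*F(M),S_k)=0$ for $k\neq i$. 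Since $\order{B}_i$ is purely of type $i$ at $o$, the stalk $(F(M))_o$ is a direct sum of copies of $L_o^{(i)}$, so $\bfL\mfi_o^*F(M)\in\thick(P_i)\subseteq \bd(\Lambda_r)$, and $\bfR\Hom_{\Lambda_r}(P_i,S_k)=0$ for $k\neq i$ finishes this step.

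The crux is step (iii), which I expect to be the main obstacle. Let $E\in\sfD$ satisfy $\bfR\Hom(F(M),E)=0$ for all $M$. Adjunction gives $\bfR\inthom_\order{A}(\order{B}_i,E)=0$. Stalkwise at $p\neq o$ this forces $E_p=0$ because $\order{B}_{i,p}=\order{A}_p$, and at $o$, projectivity of $L_o^{(i)}$ reduces the condition to $H^j(E_o)\cdot e_i=0$ for all $j$, where $e_i$ is the idempotent at vertex $i$. Since $E$ is now supported at $o$, the condition $E\in\sfD$ translates stalkwise to $\bfR\Hom_{\order{A}_o}(E_o,S_k)=0$ for $k\neq i$. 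Assume $E\neq 0$ and let $j$ be minimal with $H\colonequals H^j(E_o)\neq 0$. Then $H$ is a nonzero coherent $\mfm$-torsion $\order{A}_o$-module satisfying $H\cdot e_i=0$, so $S_i$ does not appear as a composition factor of $H$, and the semisimple top $H/\rad H$ is a nonzero direct sum of simples $S_k$ with $k\neq i$. Composing the canonical truncation $E_o\to H[-j]$ with a surjection $H\twoheadrightarrow S_k$ yields a nonzero element of $\Hom(E_o,S_k[-j])$, contradicting the vanishing above. The subtlety is that the devissage occurs among $\mfm$-torsion $\order{A}_o$-complexes rather than $\Lambda_r$-modules; the representation-theoretic bridge is that $H\cdot e_i=0$ forbids $S_i$ as a composition factor of the finitely generated module $H$.
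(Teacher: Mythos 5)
Your proposal is correct in substance and shares the paper's skeleton for the first two steps (full faithfulness of $\mfj_{\order{B}_i,*}$ via \Cref{lem:properties-of-morphisms-by-maximal-overorders}, and containment of its image in $\sfD$ by localizing at $o$, using that $\order{B}_i$ is purely of type $i$ so that the stalk is built from $L_o^{(i)}$, hence $\bfL\mfi_o^*$ lands in $\thick(P_i)$ with $\bfR\Hom(P_i,S_k)=0$ for $k\neq i$). Where you genuinely diverge is the generation step. The paper argues directly: given $M\in\sfD$, it represents $M_o$ by a bounded complex $Q^\bullet$ of projective $\order{A}_o$-modules and asserts that the vanishing $\Hom(Q^\bullet,\mfi_{o,*}S_k)=0$ for $k\neq i$ forces $Q^\bullet$ into the subcategory generated by $L_o^{(i)}$, hence $M$ into $\mfj_{\order{B}_i,*}\bd(C,\order{B}_i)$. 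You instead exploit that the image is admissible in $\sfD$ (both adjoints of $F$ exist and are exact because $\order{B}_i$ is locally projective on both sides) and kill its right orthogonal inside $\sfD$ by a d\'evissage: $\bfR\inthom_\order{A}(\order{B}_i,E)=0$ forces $E$ to be supported at $o$ with $H^j(E_o)e_i=0$, so no composition factor $S_i$, and then a truncation map onto the top cohomology contradicts $\Hom(E,\mfi_{o,*}S_k[\ell])=0$. Your route buys a more explicit justification of exactly the point the paper treats tersely (why orthogonality to the $S_k$, $k\neq i$, pins the object down to type $i$), at the cost of invoking admissibility of the image first; the paper's route is shorter but leans on the unproved local assertion about $Q^\bullet$. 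One small slip: in the d\'evissage you should take $j$ \emph{maximal} (not minimal) with $H^j(E_o)\neq 0$, since the canonical truncation map you need goes $E_o\to\tau_{\ge j}E_o\simeq H^j(E_o)[-j]$; with the minimal degree the natural map points into $E_o$ and does not produce an element of $\Hom(E_o,S_k[-j])$. With that correction the argument goes through.
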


\begin{proof}
    Each of the maximal orders~$\order{B}_i$ is in fact Azumaya.
    We consider the case~$i =r$.
    Since~$\field$ is algebraically closed, we have~$\coh(C,\order{B}_r)\simeq \coh(C)$.

    By \Cref{lem:properties-of-morphisms-by-maximal-overorders}, the pushforward and the pullback are both exact,
    and the pushforward is fully faithful.
    This implies that
    \begin{equation}
        \mfj_{\order{B}_{r},*}\colon \bd(C,\order{B}_r)\to \bd(C,\order{A})
    \end{equation}
    is fully faithful on the bounded derived category as well.

    We need to show that~$\mfj_{\order{B}_{r},*}\bd(C,\order{B}_r) = \sfD$.
    Using \Cref{lem:rel-criterion-semiorthogonality}, one has to show that~$M \in \mfj_{\order{B}_r,*}\bd(C,\order{B}_r)$
    if and only if~$\bfR\inthom_{\order{A}}(M,\mfi_{o,*}S_k) = 0$ for every~$k = 1,\ldots,r-1$.
    Note that~$\mff$ is an extension.
    Therefore~$\bfR f_*$ is the identity.  

    Since~$M\in \bd(C,\order{A})$,
    we can calculate~$\bfR\inthom_{\order{A}}(M,\mfi_{o,*}S_k)$ 
    by replacing~$M$ by a bounded cochain complex of projective~$\order{A}$-modules.
    Moreover~$\mfi_{o,*}S_k \in \coh(C,\order{A})$ and it is supported at~$o\in C$.
    Thus, it suffices to show 
    \begin{equation}
        \bfR\Hom_{\order{A}_o}(M_o,\mfi_{o,*}S_k) = 0.
    \end{equation}

    Assume that~$M \in \mfj_{\order{B}_r,*}\sfD^b(C,\order{B}_r)$.
    As~$\order{B}_r$ is purely of type~$r$ at~$o$, it follows from \Cref{prop:classification-of-overorders} and its proof
    that~$\order{B}_r \cong \End_{\calO_{C,o}}(L_o^{(r)})$, where~$L_o^{(r)}$ is
    the indecomposable projective~$\order{A}_{C,o}$-module
    defined in \eqref{eq:locally-indec-projectives}.
    Hence~$M_o$ can be expressed as an iterated cone of direct sums of~$L_o^{(r)}$.
    Since~$k\neq r$, it follows that~$M\in \sfD$ as 
    \begin{equation}
        \bfR\Hom_{\order{A}_o}(L_o^{(r)},\mfi_{o,*}S_k) = \Hom_{\order{A}_o}(L_o^{(r)},\mfi_{o,*}S_k) = 0.
    \end{equation}
    Vice versa, assume that~$M\in \sfD$.
    Restricting to~$(\Spec(\calO_{C,o}),\order{A}_{o})$,
    we can represent~$M_o$ by a bounded cochain complex~$Q^\bullet$ of projective~$\order{A}_o$-modules.
    Since we have by assumption that~$\Hom_{\bd(\order{A}_o)}(Q^\bullet,\mfi_{o,*}S_k) = 0$ for every~$i = 1,\ldots, r-1$,
    the cochain complex~$Q^\bullet$ belongs to the subcategory generated by~$L_o^{(r)}$.
    It follows that~$M$ must belong to the~$C$-linear subcategory of~$\bd(C,\order{A})$
    generated by~$\order{B}_r$, i.e.~$M\in \mfj_{r,*}\bd(C,\order{B}_r)$.

    The right admissibility of~$\sfD$ is automatic
    and left admissibility follows from the adjunction~$\mfj_{\order{B}_r}^* \dashv \mfj_{\order{B}_r,*}$.
\end{proof}

\begin{proof}[Proof of \Cref{thm:deformation-absorption}]
    The fact that the semiorthogonal decomposition \eqref{eq:main-sod} is strong follows from \Cref{lem:embedding-the-curve}.
    We have shown in \Cref{lem:pushforward-of-p-infty-2-objects} that~$(\mfi_{o,*}S_{i+1},\ldots, \mfi_{o,*}S_{i-1})$
    is an exceptional collection in~$\bd(C,\order{A})$.
    From the equivalence~$\sfD \simeq \bd(C)$ of \Cref{lem:embedding-the-curve},
    it follows that~$\sfD$ is smooth and proper over~$\bd(C)$.
    For the third point,
    note that over each point~$p\neq o$,
    the restriction~$\order{A}(p)$ is Azumaya,
    and hence Morita equivalent to a point. 
    Moreover,~$\sfD_o$ is the admissible subcategory
    generated by the~$r$-th indecomposable projective~$\Lambda_r$-module, which is exceptional.
\end{proof}

\begin{remark}
    There is a two-dimensional analogue given by \emph{tame} orders of global dimension 2.
    By \cite[Theorem 1.1]{MR978602} a tame order on a surface is uniquely determined by its overorders.
    Moreover Theorem 1.14 of \emph{op.~cit.} hints to a similar decomposition of the derived category of a tame order
    as in \Cref{lem:embedding-the-curve} using a maximal overorder.
    The precise shape of such a decomposition
    must take into account that maximal orders on surfaces
    are not necessarily Azumaya.
    In light of the recently developed stacks--orders dictionary \cite{2206.13359v2} in dimension two,
    such a decomposition would be interesting.
\end{remark}

\paragraph{Periodicity of the semiorthogonal decomposition.}
As an application we show that the semiorthogonal decomposition \eqref{eq:main-sod} of~$\bd(C,\order{A})$
is~$2r$-periodic.
Let 
\begin{equation}
    \sfT = \langle \sfA, \sfB\rangle
\end{equation}
be a semiorthogonal decomposition. 
We denote the \emph{right dual semiorthogonal decomposition}
by~$\sfT = \langle \sfB, \bbR_{\sfB}\sfA\rangle$, where~$\bbR_{\sfB}$ is the right mutation functor
as defined in \cite[\S 2]{MR992977}.
By \cite[Definition 4.2]{MR4790551} a semiorthogonal decomposition~$\sfT = \langle\sfA,\sfB \rangle$ is 
\emph{$N$-periodic} if the~$N$th right dual is again the original decomposition.

In \cite[Section 4]{MR4790551} the periodicity of a semiorthogonal decomposition
for the derived category of a root stack is studied.
We provide the same result for the semiorthogonal decomposition \eqref{eq:main-sod}
thereby explaining the connection between the~$r$ different versions of \eqref{eq:main-sod}.

\begin{theorem}
    The semiorthogonal decomposition \eqref{eq:main-sod} is~$2r$-periodic.
\end{theorem}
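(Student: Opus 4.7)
The plan is to exploit the rotational symmetry encoded in \Cref{thm:deformation-absorption}: for each $i \in \{1,\ldots,r\}$ the decomposition \eqref{eq:main-sod} provides a $C$-linear component $\sfD \simeq \mfj_{\order{B}_i,*}\bd(C,\order{B}_i)$ via \Cref{lem:embedding-the-curve}, where $\order{B}_i$ is the maximal overorder purely of type $i$ at $o$. The aim is to show that applying the right dual once cycles the index $i\mapsto i-1$ modulo $r$, so that $r$ right duals complete one full rotation of the components and a second cycle, i.e.\ $2r$ right duals in total, is required to also cancel a homological shift by $[1]$ that is picked up along the way.

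Concretely, the first step is to compute the right mutation of the leftmost exceptional object $\mfi_{o,*}S_{i+1}$ through the admissible subcategory generated by $\langle \mfi_{o,*}S_{i+2},\ldots,\mfi_{o,*}S_{i-1},\sfD\rangle$. Using the distinguished triangle \eqref{eq:distinguished-triangle-divisor} together with the identification $\bfL\mfi_o^*\mfi_{o,*}S_{i+1} \cong M_{i+1}$, which comes from the canonical self-extension \eqref{eq:canonical-self-extension} of the $\bbP^{\infty,2}$-object $S_{i+1}$, the mutation can be described by a short complex of locally projective $\order{A}$-modules built from indecomposable projectives at $o$. I would then verify that the new rightmost $C$-linear component of the dual decomposition matches $\mfj_{\order{B}_{i-1},*}\bd(C,\order{B}_{i-1})$, which is the claimed index shift on the overorder side.

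Iterating $r$ times returns the index to $i$, and the homological shift contributed by each mutation (controlled by the single nontrivial map $\theta$ of degree $2$ in $\Ext^\bullet(S_k,S_k)\cong \field[\theta]$) should accumulate to a single shift by $[1]$ on each exceptional object per cycle, thereby forcing the minimal period to be $2r$ and not $r$. The main obstacle is the bookkeeping of the overorder types and the homological shifts simultaneously through $r$ iterated mutations; a cleaner alternative, which I would pursue in parallel, is to transport the statement across the stacks--orders dictionary of \Cref{sec:dictionary-orders-stacks} and deduce it from the $2r$-periodicity established in \cite[Section 4]{MR4790551} for the analogous semiorthogonal decomposition of $\bd(\stack{C})$ of the associated root stack.
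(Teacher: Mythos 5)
Your overall target ($2r$, not $r$ or $r^2$) is right, but the mechanism you describe does not work, and the actual engine of the paper's proof is missing. With the definition used here, the right dual of $\langle \sfA,\sfD\rangle$, where $\sfA=\langle \mfi_{o,*}S_{i+1},\ldots,\mfi_{o,*}S_{i-1}\rangle$, is $\langle \sfD,\bbR_{\sfD}\sfA\rangle$: after \emph{one} dual the $C$-linear block sits on the \emph{left}, so the result is not of the shape \eqref{eq:main-sod} for any index, and in particular a single dual does not realize $i\mapsto i-1$. Only after \emph{two} duals does the shape $\langle(\text{exceptional block}),(\text{curve block})\rangle$ recur, now with the overorder type rotated by one; iterating gives the original decomposition after $2r$ duals. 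Your second mechanism is also flawed: the components of a semiorthogonal decomposition are full triangulated subcategories, hence closed under shifts, so a residual shift by $[1]$ on generators changes nothing at the level of components and cannot be what ``forces'' the period to be $2r$ rather than $r$ --- if $r$ duals reproduced the components up to shift, the decomposition would already be $r$-periodic. Finally, the explicit mutation computations you propose (mutating $\mfi_{o,*}S_{i+1}$ through everything else via \eqref{eq:distinguished-triangle-divisor}) are only sketched, and you conflate the mutated exceptional object with the new $C$-linear component: mutating an exceptional object does not by itself transform $\sfD$ into $\mfj_{\order{B}_{i-1},*}\bd(C,\order{B}_{i-1})$.

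The paper avoids all of this bookkeeping by using the Serre functor: $\bd(C,\order{A})$ has $\bbS_{\order{A}}(M)=M\otimes_{\order{A}}\omega_{\order{A}}[1]$, and by Bondal--Kapranov the mutation of a block through its complement is computed by $\bbS$, so each pair of right duals replaces both blocks by their image under $\bbS^{\pm 1}$. Since $\bbS_{\order{A}}(P_i)=P_{i+1}[1]$ and $\bbS_{\order{A}}(\mfi_{o,*}S_i)=\mfi_{o,*}S_{i+1}$, and since $\sfD$ is generated by $P_i\otimes_C\calL_\alpha$ by (the proof of) \Cref{lem:embedding-the-curve}, one gets $\bbS^{r}(\sfA)=\sfA$ and $\bbS^{r}(\sfD)=\sfD$ (the shift $[r]$ being absorbed), hence $2r$-periodicity. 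If you want to salvage your approach, either reorganize it around the Serre functor in exactly this way, or, for your alternative route via the stacks--orders dictionary, you would still need to check that the equivalence $\coh(C,\order{A})\simeq\coh(\sqrt[r]{C;\Delta})$ identifies \eqref{eq:main-sod} with the root-stack decomposition of \cite{MR4790551} \emph{and} is compatible with passing to right duals; neither point is addressed in your sketch.
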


\begin{proof}
    Start with the semiorthogonal decomposition
    \begin{equation}
        \bd(C,\order{A}) = \langle \mfi_{o,*}S_{i+1},\ldots, \mfi_{o,*}S_{i-1}, \mfj_{\order{B}_{i,*}}\bd(C,\order{A})\rangle.
    \end{equation}
    By the proof of \Cref{lem:embedding-the-curve}, the category~$\mfj_{\order{B}_{i,*}}\bd(C,\order{A})$
    is generated by~$\{P_i \otimes_C \calL_{\alpha}\}$,
    where~$\{\calL_{\alpha}\}_{\alpha}$ is a generating set of~$\bd(C)$,
    and~$P_i$ is the locally projective~$\order{A}$-module purely of type~$i$ at~$o$
    such that~$P_{i,o} \cong L_o^{(i)}$ is indecomposable.
    See \eqref{eq:locally-indec-projectives} for a definition.

    By \cite[Theorem 1]{MR738217}~$\bd(C,\order{A})$ possesses a Serre functor~$\bbS_\order{A}\colon \bd(C,\order{A})\to \bd(C,\order{A})$
    given by~$\bbS_\order{A}(M)= M\otimes_{\order{A}}\omega_{\order{A}}[1]$
    with dualizing bimodule~$\omega_\order{A} = \inthom_X(\order{A},\omega_X)$.
    The Serre functor satisfies~$\bbS_{\order{A}}(P_i) = P_{i+1}[1]$,
    and~$\bbS_{\order{A}}(\mfi_{o,*}S_i) = \mfi_{o,*}S_{i+1}$.

    If we denote by~$\sfA = \langle \mfi_{o,*}S_{i+1},\ldots, \mfi_{o,*}S_{i-1}\rangle$,
    and by~$\sfB = \mfj_{\order{B}_{i,*}}\bd(C,\order{A})$, it follows from \cite[Proposition 3.6]{MR1039961}
    that~$\bbR_{\sfB}(\sfA) = \bbS(\sfA)$.
    Hence, after~$2r$ times taking the right dual of~$\sfB$,
    the category~$\sfB$ is replaced by~$\bbS^{r}(\sfB)$.
    Since~$\bbS^r(\sfB)$ is generated by~$\{\bbS^r(P_i)\otimes_C\calL_{\alpha}\}_\alpha$,
    and~$\sfB$ is triangulated, it follows that~$\bbS^r(\sfB) = \sfB$.
    Similarly, we have that~$\bbS^{r}(\sfA) = \sfA$.
\end{proof}

\subsection{The dictionary between hereditary orders and smooth root stacks}\label{sec:dictionary-orders-stacks}
There is a stacks--orders dictionary in dimension one and two \cite{MR2018958,2206.13359v2,2410.07620}.
Let~$C$ be a quasi-projective curve.
By \cite[Corollary 7.8]{MR2018958}, resp.~\cite[\S 2.2]{2410.07620} in the language of root stacks,
the dictionary relates the following two objects.
\begin{enumerate}[label = \roman*)]
    \item Let~$\order{A}$ be a hereditary~$\calO_C$-order with ramification divisor~$\Delta = \{p_1,\ldots,p_m\}$
    and ramification indices~$\bfr =(r_1,\ldots, r_m)\in \bbN^{m}$, and
    \item denote by~$\pi \colon \sqrt[\bfr]{C;\Delta}\to C$ the iterated root stack over~$C$ 
    by doing the~$r_i$-th root construction at~$p_i\in \Delta_{\order{A}}$.
\end{enumerate}
\begin{theorem}[Chan--Ingalls]
    With the notation as above, there is an equivalence of categories
    \begin{equation}
        \coh(C,\order{A}) \simeq \coh(\sqrt[\bfr]{C;\Delta_\order{A}}).
    \end{equation}
\end{theorem}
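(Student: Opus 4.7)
The plan is to establish the equivalence étale-locally on~$C$ and glue, using that both~$\coh(C,\order{A})$ and~$\coh(\sqrt[\bfr]{C;\Delta_{\order{A}}})$ satisfy Zariski descent on~$C$. Away from the ramification divisor~$\order{A}$ is Azumaya and~$\sqrt[\bfr]{C;\Delta_{\order{A}}}$ restricts to~$C$, so after Morita reduction both sides give~$\coh(C\setminus \Delta_{\order{A}})$ and they match there. The content lies at the ramification points.

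At a ramification point~$p = p_i$ with uniformizer~$\pi\in R = \calO_{C,p}$ and ramification index~$r = r_i$, the local model of the root stack is the quotient stack~$[\Spec R[t]/(t^r - \pi)\,/\,\mu_r]$, whose coherent sheaves are~$\mu_r$-equivariant coherent modules over~$R[t]/(t^r-\pi)$, equivalently~$\bbZ/r$-graded modules~$M = \bigoplus_{k=0}^{r-1} M_k$ with~$t$ of degree~$1$. By Morita invariance of both sides, I may assume the ramification data at~$p$ is~$(1,\ldots,1)$, so that~$\order{A}_p$ is identified with the standard hereditary order~$\Gamma\subseteq \Mat_r(R)$ as in \eqref{eq:classification-hereditary-order}. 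A right~$\Gamma$-module decomposes via the diagonal idempotents as~$N = N^{(1)}\oplus \cdots \oplus N^{(r)}$, and right multiplication by the superdiagonal elementary matrices provides inclusions~$\mfm N^{(k)} \hookrightarrow N^{(k+1)}$ that determine the~$\Gamma$-action.

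The local equivalence then sends a graded module~$(M_k)_{k\in \bbZ/r}$ to the tuple~$N^{(k)} \colonequals M_k$, where the structural inclusions are given by multiplication by~$t$ (whose~$r$-th power is~$\pi$, a generator of~$\mfm$). Full faithfulness and essential surjectivity are a direct verification from the module descriptions. Conceptually this equivalence is realized by the locally free sheaf~$\calE = \bigoplus_{k=0}^{r-1}\calO(k\calD)$ on the root stack, where~$\calD$ is the tautological divisor with~$r\calD = \pi^{-1}(p)$; its endomorphism algebra is exactly~$\Gamma$ and~$\calE$ is a projective generator, giving the equivalence via~$\Hom(\calE,-)$.

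The main obstacle will be making the local equivalences globally compatible. Concretely, one must exhibit a global locally free sheaf~$\calE$ on~$\sqrt[\bfr]{C;\Delta_{\order{A}}}$, constructed by gluing the local~$\bigoplus_k\calO(k\calD_i)$ across the ramification points and extending by a trivial bundle on the unramified locus, and show that~$\pi_*\intend(\calE)$ is globally Morita equivalent to~$\order{A}$. Once this is established, the equivalence is obtained by applying~$\pi_*\inthom(\calE,-)$, checking that it is exact (since~$\calE$ is locally projective and~$\pi$ is tame), and recovering essential surjectivity and full faithfulness from the local statements via Zariski descent. Independence of uniformizer follows from the fact that any two uniformizers differ by a unit which acts by a gauge transformation between the two local tilting bundles.
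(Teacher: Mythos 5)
Note first that the paper does not prove this statement at all: it is quoted with attribution from Chan--Ingalls \cite[Corollary 7.8]{MR2018958} (see also \cite{2410.07620}), so there is no internal proof to compare yours against. Your outline does follow the standard route from the literature: locally at a ramified point the root stack is $[\Spec R[t]/(t^r-\pi)/\mu_r]$, its coherent sheaves are $\bbZ/r$-graded $R[t]/(t^r-\pi)$-modules (equivalently modules over the skew group algebra, which in the tame case is the standard hereditary order of \eqref{eq:classification-hereditary-order} with data $(1,\ldots,1)$), and globally the equivalence is implemented by $\pi_*\inthom(\calE,-)$ for a locally free progenerator $\calE$ on the stack --- exactness of this functor being clear since $\calE$ is locally free and the stack is tame. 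The minor slip that the superdiagonal maps are ``inclusions $\mfm N^{(k)}\hookrightarrow N^{(k+1)}$'' (they are just maps whose $r$-fold composite is multiplication by $\pi$, injective only for torsion-free modules) is harmless.

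The genuine gap is the step you yourself flag as the ``main obstacle'' and then leave as a task: showing that $\pi_*\intend(\calE)$ is Morita equivalent to the \emph{given} order $\order{A}$, not merely to some hereditary order with the same ramification data. Local Morita equivalences at each point of $C$ do not glue to a global one, and Zariski descent for the two abelian categories does not by itself produce compatible equivalences on overlaps (one would need to match the gluing 2-data). What is missing is precisely the global classification input: by Tsen's theorem $\order{A}\otimes_C\field(C)\cong\Mat_n(\field(C))$, so $\order{A}$ sits inside a matrix algebra over the function field, and a hereditary order on a curve is determined up to Morita equivalence by its ramification data --- this is \cite[Theorem 7.6]{MR2018958}/\cite[Proposition 7.7]{MR3695056}, proved by a local-to-global modification argument (a coherent torsion-free order is determined by its localizations, and one conjugates each localization into standard form). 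Equivalently, you could avoid quoting the classification by constructing $\calE$ functorially \emph{from} $\order{A}$ (as a sheaf of local projective generators pulled back to the root stack) rather than abstractly as $\bigoplus_k\calO(k\calD_i)$ and comparing afterwards. Without one of these inputs, your argument only proves the theorem for one representative in each Morita class, which is weaker than the statement.
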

Therefore, there is a stacky version of \Cref{thm:deformation-absorption}.
Assume that~$\Delta = \{o\}$ and do the~$r$-th root construction at~$o$.
Then the singular fiber is described by
\begin{equation}
    \Spec(\field(o))\times_C \sqrt[r]{C;\Delta} \cong \left[\Spec\left(\frac{\field[t]}{(t^r)}\right)/\mu_r\right],
\end{equation}
where~$\mu_r$ is the group scheme of~$r$-th roots of unity, and the~$r$-th primitive root acts by multiplication on~$t$.
Coherent sheaves on the singular fiber are given by 
\begin{equation}
    \coh([\Spec(\field[t]/(t^r))/\mu_r])
    \simeq \coh^{\mu_r}(\field[t]/(t^r))
    \simeq \coh (\field[t]/(t^r)*\mu_r),
\end{equation}
where~$\field[t]/(t^r)*\mu_r$ is the skew group algebra.

On the other hand each closed point in~$\sqrt[r]{C;\Delta}$
(i.e.~a morphism~$\Spec \field \to \sqrt[r]{C;\Delta}$)
factors through its residual gerbe~$\text{B}\text{Aut}(p)$,
where~$p\in C$ is obtained by postcomposing with the map to the coarse moduli space~$C$.
The residual gerbe is only nontrivial at~$o\in C$,
where we have~$\text{B}\mu_r = [(\Spec\field(o))/\mu_r]$,
and
\begin{equation}
    \coh(\text{B}\mu_r) \simeq \modules(\field[\mu_r]).
\end{equation}
The group algebra~$\field[\mu_r]$ is semisimple with~$r$ simple modules.
Following the terminology of \cite[Section 2.2]{MR4280492},
we obtain therefore~$r$ generalized points~$(o,\zeta_1),\ldots, (o,\zeta_r)$,
each corresponding to one character of~$\field[\mu_r]$.
Denote by~$\calO_{o,\zeta_i}$ the irreducible~$\mu_r$-representation corresponding to~$(o,\zeta_i)$.

By \cite[Example 2.4.3]{MR2306040}, the residual gerbe over~$o$ embeds as a closed substack
into the fiber~$\Spec \field(o) \times_C \sqrt[r]{C;\Delta}$ so that we obtain a commutative diagram
\begin{equation}
\begin{tikzcd}
    \left[\Spec\field(o)/\mu_r\right]\arrow[dr]\arrow[r, "j"]&
    \left[\Spec(\field[t]/(t^r))/\mu_r\right]\arrow[r, "\iota_o"]\arrow[d] &
    \sqrt[r]{C;\Delta}\arrow[d, "\pi"]\\
    & \Spec\field(o)\arrow[r, "i_o"]&C
\end{tikzcd}
\end{equation}
On the level of algebras,~$j$ corresponds
to the~$\field$-algebra homomorphism~$\field[t]/(t^r)*\mu_r \to \field[\mu_r]$,
which identifies the irreducible module~$\calO_{o,\zeta_i}$
with a simple~$\field[t]/(t^r)*\mu_r$-module, denoted by~$\widetilde{\calO}_{o,\zeta_i}$.
We can use the modules~$\widetilde{\calO}_{o,\zeta_i}$ for the version
of \Cref{thm:absorption-of-singularities}
for stacky curves.
\begin{corollary}\label{cor:absorption-singularities-stacks}
    Let~$i \in \{1,\ldots, r\}$.
    The collection~$\widetilde{\calO}_{o,\zeta_{i+1}}, \ldots, \widetilde{\calO}_{o,\zeta_{i-1}}$
    (counted modulo~$r$) is a semiorthogonal collection
    of~$\bbP^{\infty,2}$-objects in~$\bd(\Spec(\field(o))\times_C \sqrt[r]{C;\Delta})$.

    Moreover, the smallest triangulated category~$\sfO_i\subset \bd(\Spec(\field(o))\times_C \sqrt[r]{C;\Delta})$
    containing~$\widetilde{\calO}_{o,\zeta_{i+1}}, \ldots, \widetilde{\calO}_{o,\zeta_{i-1}}$
    absorbs singularities of~$\Spec(\field(o))\times_C \sqrt[r]{C;\Delta}$.
\end{corollary}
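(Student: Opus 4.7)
The plan is to transport \Cref{thm:absorption-of-singularities} across the stacks--orders dictionary, so that the statement becomes an almost formal consequence once the right identifications are made. Since all the conditions under scrutiny (semiorthogonality, the $\bbP^{\infty,2}$-property, admissibility, smoothness and properness of the orthogonal complements) are invariants of the bounded derived category, it is enough to exhibit an equivalence of triangulated categories that matches the collection $(\widetilde{\calO}_{o,\zeta_{i+1}},\ldots,\widetilde{\calO}_{o,\zeta_{i-1}})$ with the collection $(S_{i+1},\ldots,S_{i-1})$ appearing in \Cref{thm:absorption-of-singularities}.

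The first step is to extract a fibrewise version of the Chan--Ingalls equivalence. The equivalence $\coh(C,\order{A})\simeq \coh(\sqrt[r]{C;\Delta})$ is induced by a Morita bimodule which is flat over $C$, so base changing along $i_o\colon \Spec\field(o)\to C$ produces an equivalence
\begin{equation*}
    \bd(\order{A}(o)) \;\simeq\; \bd\bigl(\Spec(\field(o))\times_C\sqrt[r]{C;\Delta}\bigr) \;\simeq\; \bd\bigl(\field[t]/(t^r)*\mu_r\bigr).
\end{equation*}
Combining this with \Cref{lem:morita-equivalence-to-cyclic-algebra}, we obtain an equivalence $\bd(\Lambda_r)\simeq \bd(\field[t]/(t^r)*\mu_r)$. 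This is also visible directly: the skew group algebra $\field[t]/(t^r)*\mu_r$ is basic after passing to the idempotents attached to the characters $\zeta_1,\ldots,\zeta_r$ of $\mu_r$, and its Ext-quiver is exactly $Q_r$ with the admissible ideal $(\rad \field Q_r)^{r-1}$, because $t$ has $\mu_r$-weight $1$ and squares to a higher power of $t$ only after going around the cycle; this yields a Morita equivalence $\field[t]/(t^r)*\mu_r\simeq \Lambda_r$ that matches vertices with characters.

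The second step is to verify that under this equivalence the simple module $S_i$ (the one-dimensional $\Lambda_r$-module supported at vertex $i\in Q_0$) is sent to the simple module $\widetilde{\calO}_{o,\zeta_i}$ coming from the residual gerbe at $(o,\zeta_i)$. Both are characterised by the property that they are annihilated by $\rad$ and carry the $i$-th character/vertex, and this is precisely the information preserved by the identification of quivers above (up to a fixed labelling convention of the characters of $\mu_r$, which can be chosen so that the indexing is consistent with that of the $S_i$).

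Once these identifications are in place, the corollary is immediate: semiorthogonality, the $\bbP^{\infty,2}$-property, and the absorption of singularities for the subcategory $\sfO_i\subseteq \bd(\Spec(\field(o))\times_C\sqrt[r]{C;\Delta})$ generated by $\widetilde{\calO}_{o,\zeta_{i+1}},\ldots,\widetilde{\calO}_{o,\zeta_{i-1}}$ translate term by term from the corresponding statements for $\sfS_i\subseteq \bd(\Lambda_r)$ in \Cref{thm:absorption-of-singularities} and \Cref{lem:p-infty-2-objects}. The only non-cosmetic obstacle in writing this out is pinning down the matching of the cyclic labelling of characters of $\mu_r$ with the vertex labelling of $Q_r$; once a convention is fixed, both sides of the dictionary enjoy the same rotational symmetry, so the statement holds for every choice of $i\in\{1,\ldots,r\}$.
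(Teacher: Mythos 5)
Your proposal is correct and follows essentially the same route as the paper: identify coherent sheaves on the fiber $\Spec(\field(o))\times_C\sqrt[r]{C;\Delta}\cong\left[\Spec(\field[t]/(t^r))/\mu_r\right]$ with modules over the skew group algebra, use the $\field$-algebra isomorphism $\field[t]/(t^r)*\mu_r\cong\Lambda_r$ matching $\widetilde{\calO}_{o,\zeta_i}$ with $S_i$, and transfer \Cref{thm:absorption-of-singularities} verbatim. Your preliminary step of base changing the Chan--Ingalls Morita bimodule is superfluous (and the more delicate point of the argument), since the direct description of the stacky fiber and the explicit algebra isomorphism already give everything needed, which is exactly how the paper argues.
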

\begin{proof}
    This follows from \Cref{thm:absorption-of-singularities} and
    the~$\field$-algebra isomorphism~$\field[t]/(t^r)*\mu_r \cong \Lambda_r$,
    which identifies the simple~$\field[t]/(t^r)*\mu_r$-module~$\widetilde{\calO}_{o,\zeta_i}$
    with the simple~$\Lambda_r$-module~$S_i$.
\end{proof}

Using \cite[Theorem 4.7]{MR3573964} we obtain the deformation absorption result
for~$\sqrt[r]{C;\Delta}$ as well.
\begin{theorem}\label{thm:deformation-absorption-stacks}
    Let~$\sqrt[r]{C;o}\to C$ be smooth stacky curve
    with nontrivial stabilizer~$\mu_r$ over the closed point~$o\in C$.
    For each~$i \in\{1,\ldots, r\}$ there is a strong~$C$-linear semiorthogonal decomposition
    \begin{equation}\label{eq:main-sod-stacks}
        \bd(\sqrt[r]{C;o}) =
        \langle \iota_{o,*}\widetilde{\calO}_{o,\zeta_{i+1}},\ldots,  \iota_{o,*}\widetilde{\calO}_{o,\zeta_{i-1}}, \sfD\rangle
    \end{equation}
    such that
    \begin{enumerate}[label = \roman*)]
        \item the sequence~$\iota_{o,*}\widetilde{\calO}_{o,\zeta_{i+1}},\ldots,  \iota_{o,*}\widetilde{\calO}_{o,\zeta_{i-1}}$ is exceptional,
        \item the admissible subcategory~$\sfD$ is smooth and proper over~$\bd(C)$,
        \item the fibers of~$\sfD$ over~$p\in C$ are equivalent to~$\bd(\modules\field(p))$.
    \end{enumerate}
    In other words~$\sfO_{i}$ provides a deformation absorption of singularities
    of~$\left[\Spec(\field[t]/(t^r))/\mu_r\right]$ with respect to the smoothing~$\sqrt[r]{C;o}\to C$.
\end{theorem}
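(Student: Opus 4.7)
The plan is to transport Theorem \ref{thm:deformation-absorption} across the Chan--Ingalls dictionary. Let $\order{A}$ be the hereditary $\calO_C$-order with ramification locus $\{o\}$ and ramification index $r$ corresponding to $\sqrt[r]{C;o}$. The equivalence $\coh(C,\order{A}) \simeq \coh(\sqrt[r]{C;o})$ extends to an equivalence $\bd(C,\order{A}) \simeq \bd(\sqrt[r]{C;o})$. Because both sides sit naturally over $C$, via the structure morphism $\mff$ on the order side and via the coarse moduli map $\pi$ on the stacky side, this equivalence is automatically $C$-linear.

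First, I would identify the distinguished objects under the dictionary. By the algebra isomorphism $\field[t]/(t^r)*\mu_r \cong \Lambda_r$ established in \Cref{cor:absorption-singularities-stacks}, combined with the Morita equivalence $\order{A}(o)\sim \Lambda_r$ from \Cref{lem:morita-equivalence-to-cyclic-algebra}, the simple modules $\widetilde{\calO}_{o,\zeta_i}$ on the stacky fiber correspond to the simple modules $S_i$ on the order fiber. The residual gerbe $[\Spec\field(o)/\mu_r]$ embeds into the stacky fiber $[\Spec(\field[t]/(t^r))/\mu_r]$ compatibly with how $\Spec \field(o)$ sits inside $(C,\order{A})$ via $\mfi_o$. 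Consequently the pushforward $\iota_{o,*}\widetilde{\calO}_{o,\zeta_i}$ corresponds to $\mfi_{o,*}S_i$ under the dictionary.

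With this correspondence in hand, the strong $C$-linear semiorthogonal decomposition of \Cref{thm:deformation-absorption} transports verbatim to yield \eqref{eq:main-sod-stacks}. Exceptionality of the leading $r-1$ objects, admissibility of $\sfD$ together with its smoothness and properness over $\bd(C)$, and the description of the generic fibers $\sfD_p \simeq \bd(\modules\field(p))$ for $p\neq o$ all pass through the equivalence. The reformulation as a deformation absorption of singularities of $[\Spec(\field[t]/(t^r))/\mu_r]$ with respect to $\sqrt[r]{C;o}\to C$ is then immediate from \Cref{cor:absorption-singularities-stacks}.

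The main subtlety, and the step I expect to require the most care, is verifying that the $C$-linear structures on both sides match up under Chan--Ingalls and that pushforward along the closed immersions $\iota_o$ and $\mfi_o$ commute with the equivalence. Since the dictionary is constructed étale-locally on $C$, one can reduce to an étale neighborhood of $o$ and check the correspondence using the explicit form \eqref{eq:classification-hereditary-order} of the hereditary order together with the standard local model $[\Spec(\field[t]/(t^r))/\mu_r]$ of the root stack. As an alternative route, one could instead invoke \cite[Theorem 4.7]{MR3573964} directly and match the components of the resulting root-stack decomposition with those in \eqref{eq:main-sod-stacks} via the identification of simple modules in \Cref{cor:absorption-singularities-stacks}.
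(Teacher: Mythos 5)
Your route is correct in outline but genuinely different from the paper's. The paper does not transport \Cref{thm:deformation-absorption} across the Chan--Ingalls equivalence at all: it observes that $\iota_{o,*}\widetilde{\calO}_{o,\zeta_k} = \bfR(\iota_{o,*}\circ j)\,\calO_{o,\zeta_k}$, so the decomposition \eqref{eq:main-sod-stacks} for one value of $i$ is literally the root-stack semiorthogonal decomposition of \cite[Theorem 4.7]{MR3573964} with $\sfD = \bfL\pi^*\bd(C)$, and the remaining values of $i$ are obtained from the periodicity result \cite[Theorem 4.3]{MR4790551} --- a two-line argument that is exactly the ``alternative route'' you mention at the end. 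Your primary route, pushing the order-side theorem through the dictionary of \cite{MR2018958}, is the reverse of the transport the introduction alludes to and would give a proof of the stacky statement independent of \cite{MR3573964}; that is a real gain, but it front-loads the work into compatibility checks that the paper never has to make. Be careful with the claim that the equivalence is ``automatically'' $C$-linear: $C$-linearity, the matching of $\iota_{o,*}\widetilde{\calO}_{o,\zeta_i}$ with $\mfi_{o,*}S_i$, and above all the compatibility with base change along $\Spec\field(p)\to C$ needed for statement (iii) (the notion of fiber on the order side is only defined via the base-change formalism of Appendix \ref{appendix:noncommutative-base-change-formula}) are precisely the substance of such a proof and are not formal consequences of having a morphism to $C$ on both sides; you acknowledge this later, and your \'etale-local reduction is a plausible way to carry it out, but as written these steps are asserted rather than proved. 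In short: your approach works and buys a dictionary-free-of-\cite{MR3573964} proof at the cost of a nontrivial compatibility verification, whereas the paper buys brevity by quoting the root-stack decomposition and periodicity directly.
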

\begin{proof}
    We have that~$\iota_{o,*}\widetilde{\calO}_{o,\zeta_k} = \bfR(\iota_{o,*}\circ j)\calO_{o,\zeta_k}$.
    Therefore, \cite[Theorem 4.7]{MR3573964} provides
    the semiorthogonal decomposition \eqref{eq:main-sod-stacks} for~$i=0$,
    where~$\sfD = \bfL \pi^*\bd(C)$.
    For~$i>0$ the semiorthogonal decomposition follows by \cite[Theorem 4.3]{MR4790551}.
\end{proof}

\appendix
\section{A noncommutative base change formula}
\label{appendix:noncommutative-base-change-formula}
The goal of the appendix
is to give a version of
Kuznetsov's base change formula \cite[Theorem 5.6]{MR2801403}
for the bounded derived category $\bd(X,\order{A})$
of a coherent ringed scheme $(X,\order{A})$.
After restricting our attention to coherent $\calO_X$-algebras $\order{A}$
of finite global dimension,
we only have to make small modifications of the proofs of \emph{op.~cit.~}to arrive at \Cref{thm:main-base-change-theorem},
the base change formula for noncommutative schemes.
We denote
\begin{itemize}
    \item by~$\sfD(X,\order{A}) = \sfD(\QCoh(X,\order{A}))$ the \emph{unbounded derived category of quasicoherent~$\order{A}$-modules},
    and by~$\sfD^*(X,\order{A})$, for~$* \in \{+, -, \sfb\}$, its bounded below, bounded above, resp.~bounded
    derived category;
    \item by~$\sfD_\coh(X,\order{A}) = \sfD_\coh(\QCoh(X,\order{A}))$
    the \emph{derived category of quasicoherent~$\order{A}$-modules with coherent cohomology}, and
    by~$\bd(X,\order{A}) \colonequals \sfD^b_\coh(\QCoh(X,\order{A}))$ the \emph{bounded derived category} of~$(X,\order{A})$;
    \item by~$\sfD^\perf(X,\order{A})$ the \emph{category of perfect complexes}
    consisting of objects
    which are represented by complexes that are locally quasi-isomorphic to bounded complexes of locally projective~$\order{A}$-modules;
    \item by~$\sfD^{[a,b]}(X,\order{A})\subset \sfD(X,\order{A})$,
    for~$a\le b \in \bbZ$,
    all~$M\in \sfD(X,\order{A})$ such that the cohomology sheaf~$\calH^{i}(M)$ vanishes for~$i\notin [a,b]$.
    If~$a = -\infty$, we write~$\sfD^{\le b}(X,\order{A})$, 
    and if~$b = \infty$, we write~$\sfD^{\ge a}(X,\order{A})$.
\end{itemize}
\subsection*{Perfect complexes}
Recall that an object~$P \in \sfD(X,\order{A})$ is \emph{compact}
if~$\Hom_{\sfD(X,\order{A})}(P,-)$ commutes with filtered colimits.
\begin{remark}\label{rem:perfect-complexes-and-compact-objects}
    For~$(X,\order{A})$ a coherent ringed scheme, \cite[Proposition 3.14]{MR3695056} shows
    that~$\sfD(X,\order{A})$ is compactly generated,
    and the compact objects are~$\sfD(X,\order{A})^\compact = \sfD^\perf(X,\order{A})$.
\end{remark}

Using the identification \eqref{eq:coherent-cohomology-and-bounded-derived-category},
it is straightforward
to extend \cite[Proposition 2.3.1]{MR1106918} to coherent ringed schemes~$(X,\order{A})$, where~$\order{A}$ has finite global dimension.
For $\order{A} = \calO_X$, it says that a perfect complex on a quasi-projective scheme~$X$
is quasi-isomorphic to a bounded cochain complex of locally free sheaves.
\begin{lemma}\label{lem:D-perf-globally}
    Let~$(X,\order{A})$ be a coherent ringed scheme.
    \begin{enumerate}[label = \roman*)] 
        \item If~$\order{A}$ has finite global dimension or~$X = \Spec \field$ is a point,
        each perfect complex is globally quasi-isomorphic to a bounded cochain complex of locally projective modules.
        \item If~$\order{A}$ has finite global dimension, then~$\sfD^\perf(X,\order{A}) = \bd(X,\order{A})$.
    \end{enumerate}
\end{lemma}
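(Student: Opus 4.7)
The plan is to mimic the classical argument for the commutative quasi-projective case (as in \cite[Proposition 2.3.1]{MR1106918}), leveraging two inputs already available: the equivalence~\eqref{eq:coherent-cohomology-and-bounded-derived-category} and the fact that~$\coh(X,\order{A})$ admits enough locally projectives \cite[Proposition 3.7]{MR3695056}.

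For the point case of (i), I would simply observe that if~$X = \Spec \field$, then~$(X,\order{A})$ is a finite-dimensional~$\field$-algebra and locally projective modules are just projective modules. By \Cref{rem:perfect-complexes-and-compact-objects}, a perfect complex is a compact object of~$\sfD(\order{A})$, and over a ring this is standard: every compact object is quasi-isomorphic to a bounded complex of finitely generated projective modules. For the finite global dimension case of (i), given~$P \in \sfD^\perf(X,\order{A})$, note that local quasi-isomorphism to bounded complexes of locally projective modules forces~$P \in \bd(X,\order{A})$; hence by \eqref{eq:coherent-cohomology-and-bounded-derived-category} I can represent~$P$ by a bounded complex~$M^\bullet$ of coherent~$\order{A}$-modules. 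Using enough locally projectives, build inductively a surjection from a locally projective resolution~$Q^\bullet \twoheadrightarrow M^\bullet$ (degreewise surjections, constructed as for a Cartan--Eilenberg resolution). Then truncate: if~$\gldim\order{A} = n$ and~$M^\bullet$ is concentrated in degrees~$[a,b]$, replace~$Q^\bullet$ by its truncation in degrees~$[a-n,b]$, where the kernel~$K$ in degree~$a-n$ appears as an~$n$-th syzygy.

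The key step is to verify that this~$n$-th syzygy~$K$ is itself locally projective. I would check this stalk-locally using \Cref{lem:locllay-projective-is-local}: at every point~$p\in X$, the stalk~$K_p$ is an~$n$-th syzygy in a resolution by projective~$\order{A}_p$-modules of a module with finite projective dimension bounded by~$\gldim \order{A}_p \le n$, so a standard dimension-shifting argument (projective dimension is preserved under syzygies up to a shift) gives that~$K_p$ is projective.

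Part (ii) is then almost formal. The inclusion~$\sfD^\perf(X,\order{A}) \subseteq \bd(X,\order{A})$ follows from the observation already made that perfect complexes have bounded coherent cohomology. For the reverse inclusion, take any~$M \in \bd(X,\order{A})$, represent it by a bounded complex of coherent~$\order{A}$-modules via \eqref{eq:coherent-cohomology-and-bounded-derived-category}, and apply the construction of part (i) to produce a globally bounded locally projective resolution; this exhibits~$M$ as an object of~$\sfD^\perf(X,\order{A})$. The main obstacle is the dimension-shifting argument in part (i); everything else is an assembly of results already in the excerpt.
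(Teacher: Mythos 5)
Your proof is correct and follows essentially the same route as the paper: reduce a perfect complex to a bounded complex of coherent modules via \eqref{eq:coherent-cohomology-and-bounded-derived-category}, take a bounded-above locally projective resolution using \cite[Proposition 3.7]{MR3695056}, and cut it off using finite global dimension, with local projectivity of the bottom term checked at stalks. The only cosmetic differences are that the paper truncates canonically and splices in a finite-length locally projective resolution of the resulting cokernel instead of verifying directly via \Cref{lem:locllay-projective-is-local} that the syzygy kernel is locally projective, and it settles the point case simply because ``locally'' equals ``globally'' on $\Spec\field$ rather than by invoking compactness.
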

\begin{proof}
    Let~$M\in \sfD^\perf(X,\order{A})$.
    Since~$X$ is quasi-compact and the cohomology sheaf~$\calH^{\ell}(M)$ does only depend
    on the~$\calO_X$-module structure,
    it follows that~$\calH^{\ell}(M)$ is coherent for all~$\ell \in \bbZ$,
    and non-zero for only finitely many~$\ell$.
    From \eqref{eq:coherent-cohomology-and-bounded-derived-category} one sees
    that~$M \in \bd(X,\order{A}) \simeq \bd(\coh(X,\order{A}))$ since~$\order{A}$ is coherent.
    Hence every perfect complex is quasi-isomorphic
    to a bounded cochain complex of coherent $\order{A}$-modules.

    If~$X = \Spec\field$ the statements follow from the fact that every module $\order{A}$-module admits a projective resolution and by definition of perfect complexes.
    Hence, we pass to $\order{A}$ of finite global dimension.
    Let~$M \in \bd(X,\order{A})$.
    By \cite[Proposition 3.7]{MR3695056}, there is a locally projective resolution~$P^\bullet \xrightarrow{\sim} M$, 
    with~$P^\bullet \in \sfD^{-}(\coh(X,\order{A}))$ bounded above.
    Since~$M$ is bounded on both sides,
    there exists~$n\ll 0$ such that~$\calH^k(P^\bullet) = 0$ for all~$k\le n$.
    Then the canonical truncation~$N^\bullet = \tau_{\ge n}P^\bullet$ (cf.~\cite[{\href{https://stacks.math.columbia.edu/tag/0118}{Section 0118}}]{stacks-project})
    is quasi-isomorphic to~$P^\bullet$.
    The canonical truncation consists of locally projective~$\order{A}$-modules
    except at the~$n$-th position, we have~$N^n = \Coker(d^{n-1}\colon P^{n-1}\to P^n) \cong \Im(d^n)$.
    Since~$N^{n}$ is a coherent~$\order{A}$-module,
    it admits a locally projective resolution~$\varepsilon\colon Q^\bullet \to N^{n}$,
    which is of finite length if~$\order{A}$ is of finite global dimension.
    Replacing~$N^{n}$ by its locally projective resolution,
    we obtain a bounded cochain complex
    \begin{equation}
        L^\bullet = \left(\ldots \to Q^{-1}\to Q^{0}\xrightarrow{\iota\circ\varepsilon} P^{n+1}\to \ldots\right)
    \end{equation}
    of locally projective~$\order{A}$-modules
    and an induced morphism of cochain complexes $L^\bullet \to N^\bullet$.
    Here~$\iota\colon N^n\to P^{n+1}$ is the inclusion of the image of $d^n$.
    This is a quasi-isomorphism.
    All in all, we obtain that~$M$ quasi-isomorphic to~$L^\bullet \in \sfD^\perf(X,\order{A})$.
\end{proof}

Recall from \cite[Definition 1.6]{MR2437083} that an object~$M\in \bd(X,\order{A})$ is \emph{homologically finite}
if for every~$N\in \bd(X,\order{A})$
the~$\field$-vector space~$\bigoplus_{i\in \bbZ}\Ext_{\order{A}}^{i}(M,N)$ is finite-dimensional.
The following generalizes \cite[Proposition 1.11]{MR2437083} to the noncommutative setting.
\begin{lemma}\label{lem:perfect-is-homologically-finite}
    Let~$(X,\order{A})$ be a coherent ringed scheme
    such that~$\order{A}$ is of finite global dimension
    or~$X = \Spec\field$ is a point.
    An object~$M\in \bd(X,\order{A})$ is perfect if and only if it is homologically finite.
\end{lemma}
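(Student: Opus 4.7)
The plan is to dispatch the two cases --- $\order{A}$ of finite global dimension and $X = \Spec\field$ a point --- separately, since the first is essentially immediate from the previous lemma while the second is the classical characterization of perfect complexes over a finite-dimensional algebra.

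First I would address the forward implication ``perfect $\Rightarrow$ homologically finite''. Using \Cref{lem:D-perf-globally}(i), represent a perfect $M$ by a globally defined bounded complex $P^\bullet$ of locally projective $\order{A}$-modules. If $X = \Spec\field$, each $P^i$ is then a finitely generated projective $\order{A}$-module, hence finite-dimensional over $\field$, so for any $N \in \bd(\order{A})$ (which has finite-dimensional total cohomology) the bounded complex $\Hom_\order{A}(P^\bullet, N)$ consists of finite-dimensional vector spaces; its cohomology computes $\Ext^\bullet_\order{A}(M,N)$ and is therefore finite-dimensional. In the case of $\order{A}$ of finite global dimension on $X$, the converse direction ``homologically finite $\Rightarrow$ perfect'' requires no work at all, since \Cref{lem:D-perf-globally}(ii) forces every $M \in \bd(X,\order{A}) = \sfD^\perf(X,\order{A})$ to be perfect automatically.

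The substantive part is therefore the converse when $X = \Spec\field$. Here I would build a minimal projective resolution $P^\bullet \to M$, which is possible since each cohomology module of $M$ is finite-dimensional over $\field$ and $\order{A}$ is a finite-dimensional $\field$-algebra, so finitely generated projective covers exist and one can arrange the differentials to be valued in $\rad \order{A}$. Minimality ensures that for each simple $\order{A}$-module $S_j$ the induced differential of $\Hom_\order{A}(P^\bullet, S_j)$ vanishes, so $\Ext^i_\order{A}(M, S_j) \cong \Hom_\order{A}(P^{-i}, S_j)$, and the rank of $P^{-i}$ is controlled by $\sum_{j=1}^{k} \dim_\field \Ext^i_\order{A}(M, S_j)$, where $S_1, \ldots, S_k$ are representatives of the finitely many isomorphism classes of simples. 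Applying the homological finiteness hypothesis to $N = \bigoplus_{j} S_j \in \bd(\order{A})$ forces these Ext groups to vanish for $i \gg 0$, so the resolution terminates and $M$ is perfect.

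The main obstacle will be the last step: one needs to set up a minimal semi-projective resolution of a \emph{bounded complex} (not just a module) over a finite-dimensional algebra, and establish that in the minimal setting $\Ext^i(M, S_j) \cong \Hom(P^{-i}, S_j)$. This is standard homological algebra but requires slightly more care than the module case; the remaining parts of the argument either reduce to \Cref{lem:D-perf-globally}(ii) or are bookkeeping with the locally projective resolution from \Cref{lem:D-perf-globally}(i).
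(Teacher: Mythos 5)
Your handling of the point case is sound: the forward direction via the bounded complex of finite-dimensional projectives is correct, and your minimal-resolution argument for the converse (test against the direct sum of the finitely many simples, use minimality to get $\Ext^i_{\order{A}}(M,S_j)\cong\Hom_{\order{A}}(P^{-i},S_j)$, and conclude that homological finiteness forces the minimal resolution to terminate) is a correct and genuinely more self-contained route than the paper, which disposes of that implication by citation. Likewise, your observation that homologically finite $\Rightarrow$ perfect is immediate from \Cref{lem:D-perf-globally}(ii) when $\order{A}$ has finite global dimension agrees with the paper.

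There is, however, a genuine gap: in the case where $\order{A}$ has finite global dimension and $\dim X>0$, you never prove the forward implication, i.e.\ that a perfect complex is homologically finite. Your Ext computation is written only for $X=\Spec\field$, where $\Hom_{\order{A}}(P^\bullet,N)$ is a bounded complex of finite-dimensional vector spaces; for a positive-dimensional base this does not extend verbatim, because $\Ext^i_{\order{A}}(M,N)$ is the hypercohomology of the sheaf complex $\inthom_{\order{A}}(P^\bullet,N)$, not the cohomology of such a Hom-complex. This is precisely where the paper's proof does its work: it runs the local-to-global spectral sequence $\tH^p(X,\calH^q(\inthom_{\order{A}}(P^\bullet,N)))\Rightarrow\Ext^{p+q}_{\order{A}}(M,N)$, notes that it is concentrated in $0\le p\le\dim X$ and in finitely many $q$ (bounded by the amplitudes of $P^\bullet$ and $N$), and uses finite-dimensionality of the coherent cohomology groups on the $E_2$-page to conclude. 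Your closing remark that the remaining cases are ``bookkeeping with the locally projective resolution'' does not supply this global ingredient; without the spectral sequence (or an equivalent d\'evissage reducing global Ext to cohomology of coherent sheaves) this half of the statement is unproved in your plan.
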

\begin{proof}
    We explain how to modify the proof in \cite[Proposition 1.11]{MR2437083} so that it works in our setting.
    Since every homologically finite object is bounded,
    by \cite[Proposition 2.9]{MR2403307} it is perfect.
    Vice versa, assume that~$M$ is perfect.
    By \Cref{lem:D-perf-globally} it is represented by a bounded cochain complex~$P^\bullet$ of locally projective~$\order{A}$-modules.
    The spectral sequence
    \begin{equation}
        \tH^p(X,\calH^q(\inthom_{\order{A}}(P^\bullet, N))) \quad \Rightarrow \quad \Ext_{\order{A}}^{p+q}(M,N)
    \end{equation}
    is concentrated in~$p\in [0,\dim X]$.
    It is zero in the~$q$-direction outside the bounds of~$P^\bullet$.
    Moreover, each~$\field$-vector spaces on the~$E_2$-page is finite-dimensional,
    because~$\order{A}$ is coherent.
    Therefore,~$\Ext_{\order{A}}^{i}(M,N)$ is finite-dimensional for all~$i\in \bbZ$
    and non-zero for only finitely many~$i \in \bbZ$.
\end{proof}

Next, we come to a generalization of \cite[Proposition 2.5]{MR2403307}
to coherent ringed schemes.
Let~$\sfT$ be a triangulated category
with a t-structure~$(\sfT^{\le 0},\sfT^{\ge 0})$,
and~$\sfC \subset \sfD(X,\order{A})$ be a triangulated subcategory.
A functor~$\Phi \colon \sfC\to \sfT$ has \emph{finite cohomological amplitude}
if there are~$a,b$ such that 
\begin{equation}
    \Phi(\sfC \cap \sfD^{\ge 0}(X,\order{A})) \subset \sfT^{\ge a}, \quad \text{and} \quad
    \Phi(\sfC \cap \sfD^{\le 0}(X,\order{A})) \subset \sfT^{\le b}.
\end{equation}
We refer to Section 2.3 of \emph{op.~cit.} for a short introduction to (bounded) t-structures.
\begin{lemma}\label{lem:finite-cohomological-amplitude}
    Let~$(X,\order{A})$ be a coherent ringed scheme, where~$X$ is a quasi-projective variety.
    Let~$\sfT$ be a triangulated category which admits a bounded t-structure.
    Then every functor~$\Phi\colon \sfD^\perf(X,\order{A})\to \sfT$ has finite cohomological amplitude.
\end{lemma}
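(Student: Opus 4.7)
The plan is to adapt \cite[Proposition 2.5]{MR2403307} from the commutative setting to coherent ringed schemes. The key structural observation is that, because $\sfT$ has a bounded $t$-structure, each individual object of $\sfT$ has finite amplitude; the task is to upgrade this pointwise boundedness to a bound depending only on the cohomological amplitude of the input $M \in \sfD^\perf(X, \order{A})$, uniformly in $M$.

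The strategy proceeds in three steps. First, I would use quasi-projectivity of $X$ to fix an ample line bundle $\calL$, an embedding $X \hookrightarrow \bbP^N$, and the finite family of objects $G_i = \order{A} \otimes_{\calO_X} \calL^{-i}$ for $0 \le i \le N$. Each $\Phi(G_i)$ lies in some $\sfT^{[\alpha_i, \beta_i]}$, and setting $\alpha = \min_i \alpha_i$, $\beta = \max_i \beta_i$ yields uniform constants. Second, the technical heart is a noncommutative Beilinson-type resolution: for any $M \in \sfD^\perf(X, \order{A})$ with cohomology concentrated in $[a, b]$, $M$ is quasi-isomorphic to the total complex of a bounded filtration whose subquotients are finite direct sums of shifts $G_i[-j]$, with $i \in [0, N]$ and $j$ lying in a window whose length is controlled affinely by $b - a$ and $N$. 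Third, dévissage: applying $\Phi$ to this iterated cone and using the standard amplitude estimate, namely that for a distinguished triangle $A \to B \to C$ with $A \in \sfT^{[c_1, d_1]}$ and $C \in \sfT^{[c_2, d_2]}$ one has $B \in \sfT^{[\min(c_1, c_2), \max(d_1, d_2)]}$, gives a uniform bound on the amplitude of $\Phi(M)$ in terms of $a, b, N, \alpha, \beta$.

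The main obstacle is the second step: producing the Beilinson-type decomposition with uniform control on the length of the filtration. The commutative Beilinson resolution of the diagonal on $\bbP^N \times \bbP^N$ must be carried over to a resolution of the diagonal bimodule $\order{A}_\Delta$ on $(X \times X, \order{A} \boxtimes \order{A}^{\mathrm{op}})$. Because $\order{A}$ is a coherent $\calO_X$-algebra, the relevant tensor products behave well and one can build the desired resolution by tensoring the commutative Beilinson resolution with $\order{A} \boxtimes \order{A}^{\mathrm{op}}$; however, care must be taken to track the left and right $\order{A}$-module structures throughout and to verify that the length control survives the Fourier–Mukai-type convolution with $M$. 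The resulting affine length bound then feeds directly into the dévissage of the third step, yielding the claimed finite cohomological amplitude of $\Phi$.
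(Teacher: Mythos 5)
The overall architecture you propose --- bound $\Phi$ on finitely many distinguished objects, then run a d\'evissage using the triangle estimate for amplitudes --- is the right one, and it is also the shape of the paper's argument (a small modification of \cite[Proposition 2.5]{MR2403307}). The gap is in your second step, which is exactly the technical heart. A ``noncommutative Beilinson resolution of the diagonal bimodule'' of the kind you describe does not exist in the generality of the lemma. First, restricting the Beilinson resolution of $\calO_{\Delta_{\bbP^N}}$ to $X\times X$ computes the \emph{derived} pullback of $\calO_{\Delta_{\bbP^N}}$, which differs from $\calO_{\Delta_X}$ by Tor correction terms as soon as $X\subsetneq \bbP^N$; so tensoring that complex with $\order{A}\boxtimes\order{A}^{\op}$ does not resolve the diagonal bimodule. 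Second, and more fundamentally, the lemma is needed precisely when $X$ may be singular or $\order{A}$ has infinite global dimension (e.g.\ $X=\Spec\field(o)$ with $\order{A}(o)$ Morita equivalent to $\Lambda_r$): in that case the diagonal bimodule has infinite projective dimension over $\order{A}\boxtimes\order{A}^{\op}$, so no bounded filtration of an arbitrary $M\in\sfD^\perf(X,\order{A})$ with subquotients built from the finitely many $G_i$ and a shift window affine in $b-a$ can be produced this way. Uniform window control of that strength is essentially equivalent to the conclusion of the lemma, so at the decisive point the proposal assumes what it must prove.

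The paper's proof needs much less: only uniform amplitude bounds for the special objects $\order{A}(n)=\order{A}\otimes_X\calO_X(n)$, $n\in\bbZ$. These come from the Beilinson semiorthogonal decomposition on the \emph{ambient} $\bbP^N$: each $\calO_{\bbP^N}(n)$ carries a Postnikov system with factors $V_j^n\otimes\calO_{\bbP^N}(-j)$, $0\le j\le N$, where the complexes of vector spaces $V_j^n$ have cohomological amplitude bounded independently of $n$; applying the exact operations $\bfL i^*$ along $X\hookrightarrow\bbP^N$ and $-\otimes_{\calO_X}\order{A}$ (a twist by a line bundle, hence no derived correction) transports this system to $\order{A}(n)$ without ever resolving the diagonal of $X$ or invoking finiteness of $\gldim\order{A}$. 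Since the t-structure on $\sfT$ is bounded, the finitely many objects $\Phi(\order{A}(-j))$, $0\le j\le N$, lie in a common $\sfT^{[a,b]}$, and the Postnikov systems give the uniform bound for all $\Phi(\order{A}(n))$. The passage from these twists to arbitrary perfect complexes is then Kuznetsov's original approximation argument, which goes through verbatim; it does not require your generation statement by the finite set $\{G_0,\ldots,G_N\}$ with level controlled by the amplitude. If you want to salvage your route, you would have to either restrict to $\order{A}$ of finite global dimension on smooth $X$ (defeating the purpose of the lemma) or replace step (2) by the twist-by-twist argument just described.
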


\begin{proof}
 The proof of \cite[Proposition 2.5]{MR2403307} needs only a small modification.
 Let~$\calO_{X}(1)$ be an ample line bundle on~$X$,
 and denote by~$\order{A}(n) = \order{A}\otimes_X \calO_X(n)$.
 Since~$\sfT$ has a bounded t-structure, and~$X$ is quasi-projective,
 one finds~$a,b \in \bbZ$ such that~$\Phi(\order{A}(n))\in \sfT^{[a,b]}$ for all~$n\in \bbZ$.
 The remainder works as in \cite[Proposition 2.5]{MR2403307}.
\end{proof}

\subsection*{Induced semiorthogonal decompositions}
Now, we may look at the question when
semiorthogonal decompositions on~$\sfD(X,\order{A})$
and its bounded versions are induced from each other.
The exposition is close to \cite[Chapter 4]{MR2801403}.
In order to compare semiorthogonal decompositions we need the notion fo compatibility from \cite[Section 3]{MR2801403} for exact functors.
\begin{definition}
    Let~$\Phi\colon \sfT\to \sfT^\prime$ be an exact functor
    between triangulated categories with
    semiorthogonal decompositions~$\sfT = \langle \sfA_1,\ldots, \sfA_m\rangle$
    and~$\sfT^\prime = \langle \sfA^\prime_1,\ldots, \sfA^\prime_m\rangle$.
    We say that~$\Phi$\emph{ is compatible with the semiorthogonal decompositions} if~$\Phi(\sfA_k)\subset \sfA^\prime_k$.
\end{definition}
Throughout the section
we let~$\mff\colon (X,\order{A})\to S$ be a morphism
of coherent ringed schemes.
The first lemma follows as in \cite[Proposition 4.1]{MR2801403}
using \Cref{lem:perfect-is-homologically-finite}.
\begin{lemma}\label{lem:induced-sod-bounded-to-perfect}
    Let~$\bd(X,\order{A}) = \langle \sfA_1,\ldots, \sfA_m\rangle$ be a strong~$S$-linear semiorthogonal decomposition.
    If~$\order{A}$ has finite global dimension or~$X = \Spec\field$,
    then there is a unique~$S$-linear decomposition
    \begin{equation}
        \sfD^\perf(X,\order{A}) = \langle \sfA_1^\perf,\ldots, \sfA_m^\perf\rangle,
    \end{equation} 
    which is compatible with the inclusion~$\sfD^\perf(X,\order{A})\subset \bd(X,\order{A})$.
    The components are given by~$\sfA_i^\perf = \sfA_i \cap \sfD^\perf(X,\order{A})$.
\end{lemma}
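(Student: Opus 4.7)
When $\order{A}$ has finite global dimension, the conclusion is automatic because \Cref{lem:D-perf-globally}(ii) gives $\sfD^\perf(X,\order{A})=\bd(X,\order{A})$, so the given decomposition of $\bd$ already decomposes $\sfD^\perf$ and $\sfA_i^\perf=\sfA_i$. The substantive case is therefore $X=\Spec\field$, where $\order{A}$ is a finite-dimensional $\field$-algebra and in general $\sfD^\perf(X,\order{A})\subsetneq\bd(X,\order{A})$. Setting $\sfA_i^\perf:=\sfA_i\cap\sfD^\perf(X,\order{A})$, semiorthogonality is inherited from $(\sfA_i)$; the $S$-linearity of each $\sfA_i^\perf$ follows from the $S$-linearity of $\sfA_i$ together with the closure of perfect complexes under derived tensor product with perfect complexes on $S$; and compatibility with the inclusion together with uniqueness are formal once generation is established.

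The heart of the argument is to show that for every $F\in\sfD^\perf(X,\order{A})$, the components $A_\ell\in\sfA_\ell$ in its canonical decomposition \eqref{eq:split-triangles-from-sod} under the given SOD of $\bd(X,\order{A})$ are all perfect. By \Cref{lem:perfect-is-homologically-finite}, which applies in both branches of the hypothesis, perfectness is equivalent to homological finiteness, so I reduce to showing each $A_\ell$ is homologically finite in $\bd(X,\order{A})$.

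I proceed by induction on $\ell$ to prove the stronger statement that both $T_\ell$ and $A_\ell$ are homologically finite. The base case $T_0=F$ is homologically finite because $F$ is perfect. For the inductive step, assume $T_{\ell-1}$ is homologically finite. Fix $N\in\bd(X,\order{A})$ and decompose it via the SOD; iteratively peeling off components and using the semiorthogonality $\sfA_j\subset\sfA_i^\perp$ for $i>j$ reduces $\Hom^\bullet(A_\ell,N)$ to $\Hom^\bullet(A_\ell,A_\ell^N)$, where $A_\ell^N\in\sfA_\ell$ denotes the $\ell$-th component of $N$. Strongness of the SOD provides admissibility of $\sfA_\ell$ inside $\langle\sfA_\ell,\ldots,\sfA_m\rangle$, and the corresponding left adjoint $\alpha_\ell$ to the inclusion satisfies $\alpha_\ell(T_{\ell-1})\cong A_\ell$ by construction of the triangle. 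The adjunction isomorphism then gives $\Hom^\bullet(A_\ell,A_\ell^N)\cong\Hom^\bullet(T_{\ell-1},A_\ell^N)$, which is finite-dimensional in total degree by the induction hypothesis applied to $A_\ell^N\in\bd(X,\order{A})$. This proves $A_\ell$ is homologically finite, hence perfect, and the long exact sequence of $\Ext^\bullet(-,M)$ attached to $T_\ell\to T_{\ell-1}\to A_\ell$ transports homological finiteness to $T_\ell$, completing the induction.

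The main obstacle is the apparent circularity of a naive triangle argument: the long exact sequence attached to $T_\ell\to T_{\ell-1}\to A_\ell$ alone would bound $\Hom^\bullet(A_\ell,N)$ in terms of both $\Hom^\bullet(T_\ell,N)$ and $\Hom^\bullet(T_{\ell-1},N)$, so a direct chase would require knowing $T_\ell$ is already hom-finite in order to conclude $A_\ell$ is. The resolution is the adjunction-based reduction above, which needs only the left-hand piece $T_{\ell-1}$ already under control; this is exactly what strongness of the SOD buys. A categorical alternative via compact generation (\Cref{rem:perfect-complexes-and-compact-objects}) would require extending the SOD from $\bd(X,\order{A})$ to the unbounded category $\sfD(X,\order{A})$, which is not part of our hypotheses.
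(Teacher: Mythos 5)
Your identification of the right strategy (reduce perfection to homological finiteness via \Cref{lem:perfect-is-homologically-finite} and show the components of the decomposition of a perfect complex are homologically finite, which is exactly how the paper proceeds, by quoting \cite[Proposition 4.1]{MR2801403}) is correct, but the key reduction in your inductive step is wrong. Semiorthogonality gives $\Hom(\sfA_i,\sfA_j)=0$ only for $i>j$, so peeling off the components of $N$ only kills the pieces $A_j^N$ with $j<\ell$: one gets $\Hom^\bullet(A_\ell,N)\cong\Hom^\bullet(A_\ell,N_{\ell-1})$ with $N_{\ell-1}\in\langle\sfA_\ell,\ldots,\sfA_m\rangle$, which still contains all components $A_j^N$ with $j\ge\ell$. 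It does \emph{not} reduce to $\Hom^\bullet(A_\ell,A_\ell^N)$, since $\Hom(\sfA_\ell,\sfA_j)$ for $j>\ell$ need not vanish; already for $m=2$, $\ell=1$ your claim would force $\Hom^\bullet(A_1,A_2^N)=0$ for all $N$, which fails for a genuinely semiorthogonal (non-orthogonal) decomposition. Relatedly, your appeal to strongness is vacuous as written: the left adjoint $\alpha_\ell^*$ of the inclusion $\alpha_\ell\colon\sfA_\ell\hookrightarrow\langle\sfA_\ell,\ldots,\sfA_m\rangle$ and the identification $A_\ell\cong\alpha_\ell^*(T_{\ell-1})$ exist for \emph{any} semiorthogonal decomposition; what strongness buys is the \emph{right} adjoint $\alpha_\ell^!$, which your argument never uses. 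So the step establishing that $A_\ell$ is homologically finite is not proved.

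The gap can be closed along the lines you set up, and this is the actual mechanism in Kuznetsov's proof: after the (valid) reduction to $\Hom^\bullet(A_\ell,N_{\ell-1})$, work inside $\langle\sfA_\ell,\ldots,\sfA_m\rangle$ and use both adjunctions, namely $\Hom^\bullet(\alpha_\ell A_\ell,N_{\ell-1})\cong\Hom^\bullet_{\sfA_\ell}(A_\ell,\alpha_\ell^!N_{\ell-1})\cong\Hom^\bullet(T_{\ell-1},\alpha_\ell\alpha_\ell^!N_{\ell-1})$, where the first isomorphism uses the right adjoint supplied by strongness and the second uses $A_\ell\cong\alpha_\ell^*(T_{\ell-1})$ together with $\alpha_\ell^*\dashv\alpha_\ell$. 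Since $\alpha_\ell\alpha_\ell^!N_{\ell-1}\in\bd(X,\order{A})$, your induction hypothesis on $T_{\ell-1}$ gives finite-dimensionality, and the rest of your outline (the trivial finite-global-dimension case via \Cref{lem:D-perf-globally}, transporting finiteness to $T_\ell$ via the triangle, and the formal points about semiorthogonality, $S$-linearity, generation and uniqueness) then goes through.
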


The next step was already provided for~$\infty$-enhanced categories by \cite[Lemma 3.12]{MR3948688}.
We restrict ourselves to the derived category of coherent ringed schemes.
Recall from \eqref{eq:split-triangles-from-sod} that a semiorthogonal decomposition comes with
distinguished triangles.
This gives rise to \emph{projection functors}~$\pr_\ell\colon \sfT\to \sfT$,~$T\mapsto A_\ell$. 
If the decomposition is~$S$-linear, it follows from \cite[Lemma 3.1]{MR2801403}
that~$\pr_\ell(M\otimes_X \bfL f^*\calF) = \pr_\ell(M)\otimes_X\bfL f^*\calF$
for~$M\in \sfT$,~$\calF\in \sfD^\perf(S)$. 
In other words, the projection functors are~$S$-linear.
\begin{lemma}\label{lem:induced-sod-perfect-to-unbounded}
    Assume that~$\sfD^\perf(X,\order{A}) = \langle \sfA_1,\ldots, \sfA_m\rangle$ is an~$S$-linear semiorthogonal decomposition. 
    \begin{enumerate}[label = \roman*)]
        \item There is a unique~$S$-linear semiorthogonal decomposition~$\sfD(S,\order{A}) = \langle \widehat{\sfA}_1,\ldots, \widehat{\sfA}_m\rangle$
        compatible with the inclusion~$\sfD^\perf(X,\order{A})\subset \sfD(S,\order{A})$.
        \item If the semiorthogonal decomposition was induced by one of~$\bd(X,\order{A})$,
        where the projection functors~$\pr_i \colon \bd(X,\order{A})\to \bd(X,\order{A})$ have finite cohomological amplitude,
        then the semiorthogonal decomposition of~$\sfD(X,\order{A})$ is
        compatible with the inclusion~$\bd(X,\order{A})\subset \sfD(X,\order{A})$ as well.
    \end{enumerate}
\end{lemma}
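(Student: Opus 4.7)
For part (i), the plan is to apply Brown representability, using that $\sfD(X,\order{A})$ is compactly generated with $\sfD^\perf(X,\order{A})$ as its subcategory of compact objects by \Cref{rem:perfect-complexes-and-compact-objects}. Define $\widehat{\sfA}_i$ as the smallest strictly full triangulated subcategory of $\sfD(X,\order{A})$ containing $\sfA_i$ and closed under arbitrary coproducts. Semiorthogonality of $(\widehat{\sfA}_1, \ldots, \widehat{\sfA}_m)$ follows from that of $(\sfA_1, \ldots, \sfA_m)$ by a two-step cocontinuity argument: fixing compact $M \in \sfA_i$ with $i > j$, the full subcategory $\{N \in \sfD(X,\order{A}) : \Hom_\sfD(M, N[k]) = 0 \text{ for all } k\}$ is triangulated and closed under coproducts (by compactness of $M$), contains $\sfA_j$, hence contains $\widehat{\sfA}_j$; then for fixed $N \in \widehat{\sfA}_j$, the subcategory $\{M : \Hom_\sfD(M, N[k]) = 0 \text{ for all } k\}$ is cocomplete (coproducts in the first variable dualise to products of Hom-groups), contains $\sfA_i$, hence contains $\widehat{\sfA}_i$. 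Brown representability then supplies both adjoints to the inclusion $\widehat{\sfA}_i \hookrightarrow \sfD(X,\order{A})$, and generation follows by expressing every object of $\sfD(X,\order{A})$ as a homotopy colimit of perfect complexes and assembling the $\sfD^\perf$-decomposition triangles of the compact pieces. The $S$-linearity of $\widehat{\sfA}_i$ is automatic because $(-)\otimes_X^{\bfL}\bfL f^*\calF$ commutes with coproducts for $\calF \in \sfD^\perf(S)$, and by \Cref{rem:extended-S-linearity} this suffices. Uniqueness of a compatible decomposition is standard: the components of any other such decomposition must contain $\sfA_i$ and be closed under the coproducts used in the generation argument, hence contain $\widehat{\sfA}_i$, with equality forced by the semiorthogonality.

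For part (ii), the plan is to show that each projection functor $\widehat{\pr}_i\colon \sfD(X,\order{A}) \to \sfD(X,\order{A})$ has finite cohomological amplitude; then $\widehat{\pr}_i(\bd(X,\order{A})) \subseteq \bd(X,\order{A})$ is automatic, and the restricted decomposition of $\bd(X,\order{A})$ agrees with the given one by uniqueness of semiorthogonal decompositions after comparison on the common subcategory $\sfD^\perf(X,\order{A})$. On perfect complexes, uniqueness of decomposition triangles identifies $\widehat{\pr}_i|_{\sfD^\perf}$ with the restriction of $\pr_i$ (since both map into $\sfA_i$), so the cohomological amplitude hypothesis on $\pr_i$ yields finite cohomological amplitude on $\sfD^\perf$. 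To propagate this to $\sfD(X,\order{A})$, I would induct on the length of the decomposition: projection onto the rightmost component $\widehat{\sfA}_1$ is the left adjoint of its inclusion and hence commutes with arbitrary coproducts, thus with homotopy colimits; since cohomology of quasicoherent $\order{A}$-modules commutes with filtered colimits on the quasi-compact scheme $X$ (thanks to coherence of $\order{A}$), the bound on $\sfD^\perf(X,\order{A})$ lifts to all of $\sfD(X,\order{A})$. The residue then lives in $\widehat{\sfA}_1^\perp = \langle \widehat{\sfA}_2, \ldots, \widehat{\sfA}_m\rangle$, itself a localizing subcategory of $\sfD(X,\order{A})$ compactly generated by objects of $\sfD^\perf(X,\order{A})$, so the same argument applies inductively inside this shorter sub-decomposition.

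The main obstacle is this inductive propagation step in part (ii). Cocontinuity of a projection functor in a semiorthogonal decomposition is not automatic for intermediate components -- they are compositions of left and right adjoints of inclusions and so may fail to commute with arbitrary coproducts. The inductive strategy of iteratively projecting onto the rightmost piece of a shrinking nested decomposition sidesteps this, but requires verifying that each nested orthogonal complement $\langle \widehat{\sfA}_i, \ldots, \widehat{\sfA}_m\rangle$ is a compactly generated localizing subcategory of $\sfD(X,\order{A})$ -- a fact that ultimately follows from each $\widehat{\sfA}_i$ being compactly generated by objects of $\sfD^\perf(X,\order{A})$.
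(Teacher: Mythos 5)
Part (i) of your proposal is essentially the paper's argument (which simply carries over \cite[Proposition 4.2]{MR2801403} using \Cref{rem:perfect-complexes-and-compact-objects}), so no complaint there beyond phrasing. The genuine gap is in part (ii), in the ``propagation'' step. You propose to upgrade the finite cohomological amplitude of the~$\pr_i$ from~$\sfD^\perf(X,\order{A})$ to all of~$\sfD(X,\order{A})$ by writing objects as homotopy colimits of perfect complexes, but you never supply the bridge between~$\bd(X,\order{A})$ and~$\sfD^\perf(X,\order{A})$ that makes this possible. A general object of~$\sfD(X,\order{A})$ (even one in~$\sfD^{\le 0}$) is not a homotopy colimit of perfect complexes with any control on their amplitudes, so the lift ``to all of~$\sfD(X,\order{A})$'' fails as stated; it is also more than is needed. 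The actual input, which the paper's proof isolates, is that~$\coh(X,\order{A})$ has enough locally projectives \cite[Proposition 3.7]{MR3695056}: every~$F\in\bd(X,\order{A})$ admits a bounded-above resolution~$P^\bullet$ by locally projective~$\order{A}$-modules, so~$F\simeq \hocolim_N \sigma_{\ge -N}P^\bullet$ where the stupid truncations are perfect and the successive cones are concentrated in single degrees tending to~$-\infty$. Since the big projection functors~$\widehat{\pr}_i$ commute with coproducts (hence with such homotopy colimits) and restrict to~$\pr_i$ on perfect complexes, the finite amplitude of~$\pr_i$ gives~$\calH^d(\widehat{\pr}_i F)\cong \calH^d(\pr_i(\sigma_{\ge -N}P^\bullet))$ for~$N\gg 0$ depending on~$d$, and vanishing of these groups outside a fixed range. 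This is also where your claim that finite cohomological amplitude makes~$\widehat{\pr}_i(\bd(X,\order{A}))\subseteq\bd(X,\order{A})$ ``automatic'' breaks down: membership in~$\bd(X,\order{A})=\sfD^{\sfb}_{\coh}(\QCoh(X,\order{A}))$ requires \emph{coherent} cohomology, which amplitude alone does not provide; it is exactly the stabilization~$\calH^d(\widehat{\pr}_i F)\cong \calH^d(\pr_i(\sigma_{\ge -N}P^\bullet))$, with the right-hand side coherent, that delivers both boundedness and coherence at once.

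Two smaller points. The obstacle you flag at the end --- possible failure of cocontinuity for intermediate projection functors --- is not actually an issue here: since every~$\widehat{\sfA}_i$ is closed under arbitrary coproducts, the projection onto the component whose inclusion admits the projection as a left adjoint commutes with coproducts, and the remaining projections are obtained by taking cones of natural transformations between coproduct-preserving functors and composing, so all~$\widehat{\pr}_i$ commute with coproducts without any induction on the length of the decomposition. Finally, in part (i) the generation step should be phrased via compact generation (the localizing subcategory generated by the~$\widehat{\sfA}_i$ contains the compact generators~$\sfD^\perf(X,\order{A})$, hence is everything) rather than ``every object is a homotopy colimit of perfect complexes'', and the uniqueness assertion needs the components of the competing decomposition to be closed under coproducts, which is part of the characterization in \cite[Proposition 4.2]{MR2801403} rather than a consequence of compatibility alone.
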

\begin{proof}
    Part (i) carries over analogously from \cite[Proposition 4.2]{MR2801403}
    using \Cref{rem:perfect-complexes-and-compact-objects} that~$\sfD(X,\order{A})^{\compact} = \sfD^\perf(X,\order{A})$
    and~$\sfD(X,\order{A})$ is compactly generated.
    Note that~$\widehat{\sfA}_i$ is obtained from~$\sfA_i^\perf \subset \sfD(S,\order{A})$ 
    as the smallest triangulated category closed
    under (arbitrary) direct sums and cones
    containing~$\sfA_i^\perf$.
    Part (ii) also follows as in \cite[Proposition 4.2]{MR2801403},
    because every object in~$\bd(X,\order{A})$ admits a resolution
    by locally projective~$\order{A}$-modules, see \cite[Proposition 3.7]{MR3695056}.
\end{proof}

The projection functors~$\text{pr}_i\colon \sfA^\perf_i\to \sfA_i^\perf$ have finite cohomological amplitude by \Cref{lem:finite-cohomological-amplitude}.
Therefore,
as in \cite[Proposition 4.3]{MR2801403},
we obtain an induced semiorthogonal decomposition
on~$\sfD^-(X,\order{A})$.
\begin{lemma}\label{lem:induced-sod-unbounded-to-bounded-above}
Assume that~$\sfD^\perf(X,\order{A}) = \langle\sfA^\perf_1,\ldots, \sfA^\perf_m\rangle$ is an~$S$-linear semiorthogonal decomposition.
 There is a unique~$S$-linear semiorthogonal decomposition
\begin{equation}
    \sfD^{-}(S,\order{A}) = \langle \sfA_1^{-},\ldots, \sfA_m^{-}\rangle,
\end{equation}
with components~$\sfA_i^{-} = \widehat{\sfA}_i \cap \sfD^-(S,\order{A})$,
compatible with the embeddings~$\sfD^\perf(S,\order{A})\subset \sfD^-(S,\order{A})\subset \sfD(S,\order{A})$.
\end{lemma}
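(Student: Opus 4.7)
The plan is to follow the blueprint of \cite[Proposition 4.3]{MR2801403}, transported to the noncommutative setting, taking \Cref{lem:induced-sod-perfect-to-unbounded} and \Cref{lem:finite-cohomological-amplitude} as the key inputs. By \Cref{lem:induced-sod-perfect-to-unbounded} we already have an $S$-linear semiorthogonal decomposition $\sfD(X,\order{A}) = \langle \widehat{\sfA}_1,\ldots,\widehat{\sfA}_m\rangle$ compatible with the embedding $\sfD^\perf(X,\order{A})\subset \sfD(X,\order{A})$; let $\widehat{\pr}_i \colon \sfD(X,\order{A}) \to \widehat{\sfA}_i$ denote the associated projection functors, and set $\sfA_i^- \colonequals \widehat{\sfA}_i\cap \sfD^-(X,\order{A})$. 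Semiorthogonality of the sequence $\sfA_1^-,\ldots,\sfA_m^-$ inside $\sfD^-(X,\order{A})$ is then automatic, since it is inherited from the ambient pairwise semiorthogonality of the $\widehat{\sfA}_i$.

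The crux is generation: for every $M\in \sfD^-(X,\order{A})$ the objects appearing in the canonical decomposition triangles \eqref{eq:split-triangles-from-sod} must themselves lie in $\sfD^-(X,\order{A})$. Equivalently, one must verify that each projection $\widehat{\pr}_i$ preserves $\sfD^-(X,\order{A})$. First I would combine \Cref{lem:induced-sod-bounded-to-perfect} with \Cref{lem:finite-cohomological-amplitude}: applied to the exact functor $\pr_i\colon \sfD^\perf(X,\order{A})\to \sfA_i^\perf$, \Cref{lem:finite-cohomological-amplitude} provides integers $a_i\le b_i$ such that $\pr_i$ sends $\sfD^\perf\cap \sfD^{\le 0}$ into $\sfD^{\le b_i}$. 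Next I would promote this bound to the extended projection $\widehat{\pr}_i$. Given $M\in \sfD^-(X,\order{A})$, I would resolve it by a bounded above complex $P^\bullet$ of locally projective $\order{A}$-modules via \cite[Proposition 3.7]{MR3695056}, write $M$ as the homotopy colimit of its brutal truncations $\sigma_{\ge -n}P^\bullet\in \sfD^\perf(X,\order{A})$, and apply $\widehat{\pr}_i$, which commutes with homotopy colimits since $\widehat{\sfA}_i$ is closed under arbitrary direct sums and cones by construction. The uniform amplitude bound on the perfect truncations together with the standard spectral sequence for a homotopy colimit then forces $\widehat{\pr}_i(M)\in \sfD^{\le b_i}(X,\order{A})\cap \widehat{\sfA}_i\subset \sfA_i^-$.

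Once this is in place, the decomposition triangles stay inside $\sfD^-(X,\order{A})$, so the $\sfA_i^-$ generate $\sfD^-(X,\order{A})$. The $S$-linearity of each $\sfA_i^-$ is inherited from $\widehat{\sfA}_i$ (on $\sfD(X,\order{A})$ the extended $S$-linearity from \Cref{rem:extended-S-linearity} applies, and tensoring with $\bfL f^*\calF$ for $\calF\in \sfD^\perf(S)$ preserves boundedness above). Uniqueness and compatibility with the inclusion chain $\sfD^\perf\subset \sfD^-\subset \sfD$ are then formal, since any two semiorthogonal decompositions of $\sfD^-(X,\order{A})$ compatible with the ambient one and restricting to $\langle \sfA_1^\perf,\ldots,\sfA_m^\perf\rangle$ on perfect objects must have components contained in $\widehat{\sfA}_i\cap \sfD^-(X,\order{A}) = \sfA_i^-$, hence coincide with the above.

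The main obstacle is the amplitude step: \Cref{lem:finite-cohomological-amplitude} only bounds the projections on the perfect category, and extending the upper cohomological bound from $\sfD^\perf(X,\order{A})$ to all of $\sfD^-(X,\order{A})$ requires the homotopy-colimit/truncation argument together with the fact that $\widehat{\pr}_i$ commutes with arbitrary direct sums. Everything else is formal bookkeeping around the decomposition of \Cref{lem:induced-sod-perfect-to-unbounded}.
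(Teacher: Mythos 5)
Your proposal is correct and follows essentially the same route as the paper: the paper simply invokes \cite[Proposition 4.3]{MR2801403} after noting that the projection functors on $\sfD^\perf(X,\order{A})$ have finite cohomological amplitude by \Cref{lem:finite-cohomological-amplitude}, and your argument is exactly that proof spelled out in the noncommutative setting (locally projective resolutions, brutal truncations as perfect complexes, projections commuting with homotopy colimits because the $\widehat{\sfA}_i$ are closed under direct sums). The only cosmetic remark is that commuting with direct sums uses that \emph{all} components $\widehat{\sfA}_i$ are closed under direct sums, which indeed holds by their construction in \Cref{lem:induced-sod-perfect-to-unbounded}.
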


\subsection*{Base change of semiorthogonal decompositions}
We generalize the base change formulas \cite[\S 5]{MR2801403}
for semiorthogonal decompositions to the noncommutative setting.
For~$\sfD^\perf(X,\order{A})$ and~$\sfD(X,\order{A})$ this can be seen as a special case
of \cite[Lemma 3.15]{MR3948688}.
There have been several generalizations of Kuznetsov's base change formula,
notably \cite[Theorem 3.17]{MR4292740}, and \cite[Theorem 3.5]{2002.03303} weakening the assumptions
on the schemes.
Besides the base change formula \cite[Theorem 2.46]{MR2238172}
for certain Azumaya varieties,
we are not aware of results for the bounded derived category~$\bd(X,\order{A})$.
However, most of the proofs can be adapted from the commutative case.

We specialize to the situation mentioned in \Cref{rem:base-change-structure-morphism}
Assume that~$h \colon T\to S$ is a morphism of schemes
and~$\mff\colon (X,\order{A})\to S$ is a morphism of coherent ringed schemes.
Consider the base change  
\begin{equation}\label{eq:reference-nc-base-change}
\begin{tikzcd}
    (X_T,\order{A}_T)\arrow[d, "\mff_T"]\arrow[r, "\mfh_T"]& (X,\order{A})\arrow[d, "\mff"]\\
    T\arrow[r,"h"] & S
\end{tikzcd}
\end{equation}
of~$\mff$ along~$h\colon T \to S$.
\Cref{lem:noncommutative-fibre-product} implies that~$\order{A}_T = h_T^* \order{A}$.
Then, there is a natural transformation of functors
\begin{equation}
       \bfL h^*\circ\bfR \mff_{*} \Rightarrow \bfR\mff_{T,*}\circ\bfL\mfh_T^*.
   \end{equation}
from~$\sfD(X,\order{A})$ to~$\sfD(T)$.

This follows from the push-pull adjunction \cite[Lemma A.6]{MR4554471}.
Then, the construction of the natural transformation translates verbatim from \cite[{\href{https://stacks.math.columbia.edu/tag/02N6}{Section 02N6} and \href{https://stacks.math.columbia.edu/tag/08HY}{Remark 08HY}}]{stacks-project}.

\begin{definition}
    A morphism~$h\colon T \to S$ is \emph{faithful} for~$\mff$
if~$\bfL h^*\circ \bfR\mff_* \Rightarrow \bfR \mff_{T,*}\circ\bfL\mfh_T^*$ from~$\sfD(X,\order{A})$ to~$\sfD(T)$ is an equivalence.
\end{definition}
If~$\order{A}$ is flat over~$X$, a K-flat resolution~$F^\bullet$ of an~$\order{A}$-module~$M\in \sfD(X,\order{A})$ is also flat over~$X$.
Hence, on has~$\bfL \mfh_T^{*} = \bfL h_T^{*}$, and we can appeal to commutative base change, to obtain the following.
\begin{lemma}\label{lem:faithful-base-change}
    If~$\order{A}$ is flat over~$X$
    and~$f\colon X \to S$ is flat,
    then each morphism~$h\colon T \to S$ is faithful for~$\mff$.    
\end{lemma}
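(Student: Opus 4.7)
The plan is to reduce the statement to the already-established commutative flat base change formula, exploiting the two flatness hypotheses in separate places. The flatness of $\order{A}$ over $\calO_X$ is used to identify the derived noncommutative pullback $\bfL\mfh_T^*$ with the derived commutative pullback $\bfL h_T^*$ on the underlying complex of $\calO_X$-modules; the flatness of $f\colon X\to S$ is then used to invoke commutative flat base change itself. This strategy is essentially the one sketched informally in the paragraph preceding the statement, and my aim would be to make that sketch precise.

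Concretely, I would fix $M \in \sfD(X,\order{A})$ and choose a K-flat resolution $F^\bullet\to M$ in $\QCoh(X,\order{A})$, which exists because $\QCoh(X,\order{A})$ is a Grothendieck abelian category. Because $\order{A}$ is flat as an $\calO_X$-module, each term $F^i$ is flat as an $\calO_X$-module, and hence $F^\bullet$ is also K-flat as a complex of $\calO_X$-modules. Using the identification $\order{A}_T \cong h_T^*\order{A}$ from \Cref{lem:noncommutative-fibre-product}, the derived pullback is computed as
\begin{equation}
    \bfL\mfh_T^*M \simeq h_T^*F^\bullet \otimes_{h_T^*\order{A}} \order{A}_T \simeq h_T^*F^\bullet,
\end{equation}
whose underlying complex of $\calO_{X_T}$-modules coincides with $\bfL h_T^*M$. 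Dually, since the structure morphism $(X,\order{A})\to X$ is an extension whose pushforward is merely restriction of scalars, it is exact, so $\bfR\mff_*M$ is represented by $f_*F^\bullet$ on the underlying complex of $\calO_S$-modules, and the analogous statement holds for $\bfR\mff_{T,*}$.

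Under these identifications, the base change natural transformation $\bfL h^*\bfR\mff_*\Rightarrow \bfR\mff_{T,*}\bfL\mfh_T^*$, which is built from the push-pull adjunction \cite[Lemma A.6]{MR4554471} exactly as in the commutative case, coincides objectwise with the commutative flat base change transformation $\bfL h^*\bfR f_*F^\bullet\to \bfR f_{T,*}\bfL h_T^*F^\bullet$. Since $f$ is flat, the standard flat base change theorem for schemes applies and this is an isomorphism in $\sfD(T)$. The only genuinely delicate point is to check that the resulting equivalence respects the $\order{A}_T$-module structures on both sides; this I expect to follow automatically from the functoriality of the adjunction, since all constructions (K-flat resolution, tensor product with $\order{A}_T$, pushforward along an extension) are compatible with the module structures by design. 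I therefore foresee no substantial obstacle, and the lemma is really a bookkeeping statement that lifts commutative flat base change through the forgetful functors induced by the extensions $(X,\order{A})\to X$ and $(X_T,\order{A}_T)\to X_T$.
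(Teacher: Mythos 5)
Your proof is correct and takes essentially the same route as the paper, which disposes of this lemma in the single sentence preceding its statement: flatness of $\order{A}$ over $\calO_X$ makes a K-flat resolution in $\QCoh(X,\order{A})$ compute the commutative derived pullback as well, so $\bfL\mfh_T^*=\bfL h_T^*$, the pushforwards along the extensions are just (exact) restriction of scalars, and the statement reduces to commutative flat base change using flatness of $f$. The only cosmetic point is to choose the K-flat resolution with $\order{A}$-flat terms (as the standard construction provides), since then each term is $\calO_X$-flat by transitivity, whereas an arbitrary K-flat complex need not have flat terms.
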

In the following we will always assume that~$h\colon T\to S$
is faithful for~$\mff\colon (X,\order{A})\to S$.

\paragraph{Perfect complexes.}
For base changing perfect complexes, one uses that 
$\sfD^\perf(X_T,\order{A}_T)$ is generated by `box tensors' of the components
$\sfD^\perf(T)$ and~$\sfD^\perf(X,\order{A})$
inside the unbounded derived category~$\sfD(X_T,\order{A}_T)$.
This holds in the full generality of~$\infty$-categories by \cite[Lemma 2.7]{MR3948688}.
We provide the small modifications to apply the proof of \cite[Lemma 5.2]{MR2801403}.

\begin{lemma}\label{lem:base-change-perfect-category}
    The category~$\sfD^\perf(X_T,\order{A}_T)$ is
    the minimal triangulated subcategory of~$\sfD(X,\order{A}_T)$
    closed under taking direct summands 
    which is generated by the objects
    \begin{equation}
        \bfL \mfh_T^* M\otimes_{T}^\bfL \bfL f_T^*\calF,\quad
        \text{where }M\in \sfD^\perf(S,\order{A})\text{ and } \calF\in \sfD^\perf(T).
    \end{equation}
\end{lemma}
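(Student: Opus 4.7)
The plan is to adapt Kuznetsov's proof of the commutative analogue \cite[Lemma~5.2]{MR2801403} to the coherent ringed setting, using the compact generation of $\sfD(X_T,\order{A}_T)$ by its perfect complexes from \Cref{rem:perfect-complexes-and-compact-objects}.

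First I would verify that each object $\bfL\mfh_T^*M\otimes^\bfL_{\calO_{X_T}}\bfL f_T^*\calF$ is perfect. This follows because both pullbacks $\bfL\mfh_T^*$ and $\bfL f_T^*$ preserve boundedness and local projectivity (respectively local freeness), and the $\calO_{X_T}$-tensor product of a locally projective $\order{A}_T$-module with a locally free $\calO_{X_T}$-module is again locally projective over $\order{A}_T$. This gives one inclusion of the minimal thick subcategory into $\sfD^\perf(X_T,\order{A}_T)$.

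For the reverse inclusion I would appeal to Neeman's theorem on compactly generated triangulated categories: the thick closure of a set of compact objects equals the whole subcategory of compact objects provided the set generates the ambient category as a localising subcategory. Thus it suffices to show that the right orthogonal in $\sfD(X_T,\order{A}_T)$ of the class of box tensors is zero. For $N$ in this right orthogonal, the tensor-hom adjunction and the push-pull adjunction $\bfL\mfh_T^*\dashv\bfR\mfh_{T,*}$ combine to
\begin{equation*}
    0=\Hom_{\order{A}_T}\bigl(\bfL\mfh_T^*M\otimes^\bfL_{\calO_{X_T}}\bfL f_T^*\calF,\,N\bigr)
    \cong\Hom_{\order{A}}\bigl(M,\,\bfR\mfh_{T,*}\bfR\inthom_{\calO_{X_T}}(\bfL f_T^*\calF,N)\bigr)
\end{equation*}
for all $M\in\sfD^\perf(X,\order{A})$ and $\calF\in\sfD^\perf(T)$. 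Compact generation of $\sfD(X,\order{A})$ by its perfect complexes then forces $\bfR\mfh_{T,*}\bfR\inthom_{\calO_{X_T}}(\bfL f_T^*\calF,N)=0$ for every $\calF\in\sfD^\perf(T)$, which by duality for perfect $\calO_{X_T}$-complexes rewrites as $\bfR\mfh_{T,*}(\bfL f_T^*\calF^\vee\otimes^\bfL_{\calO_{X_T}}N)=0$.

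The final step is to conclude $N=0$ from this family of vanishings as $\calF$ runs through $\sfD^\perf(T)$. Here I would push further along $\mff_T$ to reach $S$, apply the projection formula for $f_T$ together with the base-change equivalence $\bfL h^*\circ\bfR\mff_*\simeq\bfR\mff_{T,*}\circ\bfL\mfh_T^*$ provided by the faithfulness of $h$ for $\mff$ (\Cref{lem:faithful-base-change}), and exploit that $\calF^\vee$ varies over a generating family of $\sfD(T)$. I expect this last conservativity step to be the main technical obstacle: one must carefully track the $\calO_{X_T}$-versus-$\order{A}_T$-module structures through the adjunctions, the duality isomorphism and the projection formula, exactly mirroring how the commutative proof in \cite[Lemma~5.2]{MR2801403} uses faithfulness of $T$ over $S$ to reduce the orthogonality on $(X_T,\order{A}_T)$ to an orthogonality on $(X,\order{A})$ that visibly forces vanishing.
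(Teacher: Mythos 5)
Your overall strategy --- check that the box tensors are perfect, then invoke Neeman's theorem that the thick closure of a set of compact objects exhausts all compact objects of the compactly generated category $\sfD(X_T,\order{A}_T)$ once the right orthogonal of that set vanishes --- is legitimate and genuinely different from the paper's argument, which instead resolves a given perfect complex by modules of the form $\mfh_T^*Q\otimes_T f_T^*\calE$ (constructed from the $h_T$-ample line bundle $f_T^*\calO_h(1)$) and then proceeds as in Kuznetsov's commutative proof by truncating and splitting off a direct summand. Your adjunction computation reducing the orthogonality statement to the vanishing of $\bfR\mfh_{T,*}\bigl(\bfL f_T^*\calF^\vee\otimes^\bfL N\bigr)$ for all $\calF\in\sfD^\perf(T)$ is also fine, using \Cref{rem:perfect-complexes-and-compact-objects} and the push--pull adjunction.

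The genuine gap is exactly the step you flag as the main obstacle, and the route you sketch for it would fail. Pushing down to $S$ and applying the projection formula along $\mff_T$ together with faithfulness of $h$ for $\mff$ only produces $\bfR h_*\bigl(\calF^\vee\otimes^\bfL\bfR\mff_{T,*}N\bigr)=0$ for all perfect $\calF$ on $T$, hence at best $\bfR\mff_{T,*}N=0$; this does not imply $N=0$, because the pushforward along $\mff_T$ is far from conservative as soon as $f$ has positive-dimensional fibres (already $\calO(-1)$ on $\bbP^1\times T\to T$ is killed). The conclusion has to be drawn over $X$, not over $S$: since $h$ is quasi-projective, pick an $h$-ample line bundle $\calO_h(1)$ on $T$; by the Stacks Project result quoted in the paper's own proof, $f_T^*\calO_h(1)$ is $h_T$-ample, and the vanishing of $\bfR\mfh_{T,*}\bigl(N\otimes f_T^*\calO_h(n)\bigr)$ for all $n$ --- a special case of what you derived --- already forces $N=0$, for instance by restricting to affine opens of $X$ and using that powers of an ample line bundle generate the derived category of a quasi-projective scheme, or by \Cref{lem:eventually-bounded-zero-sufficient}(ii) applied to $\mfh_T$. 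With this replacement your argument closes; note that it then needs neither faithfulness of $h$ for $\mff$ nor any detour through $S$, and it consumes exactly the same relative-ampleness input as the paper's resolution argument, only on the orthogonality side rather than the resolution side.
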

\begin{proof}
    Since~$h\colon T\to S$ is a quasi-projective morphism,
    every coherent~$\calO_{X_T}$-module~$\calF$ admits a surjection
   ~$f_T^*\calO_h(n)^{\oplus k}\twoheadrightarrow \calF$ for some~$n,k \in \bbZ$,
    where~$\calO_h(1)$ is an~$h$-ample line bundle on~$T$. 
    Note that by \cite[{\href{https://stacks.math.columbia.edu/tag/0893}{Lemma 0893}}]{stacks-project},
    the line bundle~$f_T^*\calO_h(1)$ is~$h_T$-ample.

    Therefore, every coherent~$\order{A}_T$-module~$M$
    admits a surjection~$f_T^*\calO_h(n)^{\oplus k}\otimes_T \order{A}_T \twoheadrightarrow M$ of~$\order{A}_T$-modules,
    where we use the right~$\order{A}_T$-module structure on~$M$.
    Since~$\mfh_T$ is strict,~$\order{A}_T$ is the pullback 
    of (the locally projective~$\order{A}$-module)~$\order{A}$.
    For this reason we find for every~$M\in \sfD^\perf(X_T,\order{A}_T)$ a bounded above
    locally projective~$\order{A}_T$-resolution~$P^\bullet \to M$ such that 
    for each~$P^i \cong \mfh_T^* Q^{i} \otimes_T f_T^*\calE^{i}$,
    where~$Q^{i}$ is locally projective, 
    and~$\calE^{i}$ is a locally free~$\calO_T$-module.
    This allows us to proceed as in \cite[Lemma 5.2]{MR2801403}.
\end{proof}

With this lemma, we are ready to define the 
base change of admissible subcategories in the category of perfect complexes.
Given an~$S$-linear semiorthogonal decomposition
\begin{equation}\label{eq:sod-perfect-S-linear}
    \sfD^\perf(X,\order{A}) = \langle\sfA_1^\perf,\ldots, \sfA_m^\perf \rangle,
\end{equation}
we define~$\sfA_{iT}^\perf$ to be the smallest triangulated subcategory,
closed under direct summands containing all objects of the
form~$\bfL \mfh_T^* M\otimes_{X_T}^\bfL \bfL f_T^*\calF$for $M\in \sfA_i^\perf$ and $\calF\in \sfD^\perf(T)$.

\begin{proposition}\label{prop:base-change-perfect}
From the~$S$-linear semiorthogonal decomposition \eqref{eq:sod-perfect-S-linear}
one obtains a~$T$-linear semiorthogonal decomposition
\begin{equation}
    \sfD^\perf(X_T,\order{A}_T) = \langle \sfA_{1T}^\perf,\ldots, \sfA_{mT}^\perf\rangle
\end{equation}
compatible with~$\bfL\mfh_T^*$.
\end{proposition}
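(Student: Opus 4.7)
The plan is to verify the two conditions defining a semiorthogonal decomposition for the candidate sequence $(\sfA_{1T}^\perf,\ldots,\sfA_{mT}^\perf)$, and then to read off $T$-linearity and compatibility with $\bfL\mfh_T^*$ from the very construction of the components. A crucial preparatory step is to apply \Cref{lem:induced-sod-perfect-to-unbounded} to the given $S$-linear decomposition of $\sfD^\perf(X,\order{A})$, producing an ambient $S$-linear decomposition $\sfD(X,\order{A})=\langle\widehat{\sfA}_1,\ldots,\widehat{\sfA}_m\rangle$. By \Cref{rem:extended-S-linearity}, the $S$-linearity of each $\widehat{\sfA}_i$ automatically upgrades to hold against all of $\sfD(S)$, which is needed because the pushforwards $\bfR h_*$ that appear below may leave the perfect world.

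Generation is immediate from \Cref{lem:base-change-perfect-category}: every object of $\sfD^\perf(X_T,\order{A}_T)$ is a summand of an iterated cone of generators $\bfL\mfh_T^*M\otimes_{X_T}^{\bfL}\bfL f_T^*\calF$ with $M\in\sfD^\perf(X,\order{A})$ and $\calF\in\sfD^\perf(T)$. Decomposing each $M$ along $\langle\sfA_1^\perf,\ldots,\sfA_m^\perf\rangle$ and using exactness of $\bfL\mfh_T^*(-)\otimes\bfL f_T^*\calF$, every such generator lies in the triangulated hull of $\sfA_{1T}^\perf,\ldots,\sfA_{mT}^\perf$.

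For semiorthogonality, fix $i>j$ and pick generators $N_i=\bfL\mfh_T^*M_i\otimes\bfL f_T^*\calF_i$ and $N_j=\bfL\mfh_T^*M_j\otimes\bfL f_T^*\calF_j$; the goal is to show $\Hom_{X_T}(N_i,N_j[k])=0$ for every $k\in\bbZ$. Dualizability of $\calF_i$ collapses both $f_T^*$-twists into a single factor $\bfL f_T^*(\calF_j\otimes\calF_i^\vee)$. Perfectness of $M_i$ yields the internal-hom identity $\bfR\inthom_{\order{A}_T}(\bfL\mfh_T^*M_i,\bfL\mfh_T^*M_j)\cong \bfL\mfh_T^*\bfR\inthom_\order{A}(M_i,M_j)$. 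Combining these with the projection formula for $\mff_T$ and the faithful base change $\bfR\mff_{T,*}\bfL\mfh_T^*\cong\bfL h^*\bfR\mff_*$, then applying the projection formula for $h$, one obtains
\[
    \bfR\Hom_{X_T}(N_i,N_j[k])\;\cong\;\bfR\Gamma_S\bigl(\bfR\mff_*\bfR\inthom_\order{A}(M_i,M_j)\otimes_S^{\bfL}\bfR h_*(\calF_j\otimes\calF_i^\vee)\bigr)[k].
\]
Since $\widehat{\sfA}_j\subset \widehat{\sfA}_i^\perp$ are $S$-linear admissible subcategories of $\sfD(X,\order{A})$, \Cref{lem:rel-criterion-semiorthogonality} forces $\bfR\mff_*\bfR\inthom_\order{A}(M_i,M_j)=0$, and the displayed expression vanishes.

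The $T$-linearity of each $\sfA_{iT}^\perf$ is immediate from its definition, as tensoring a generator with $\bfL f_T^*\calG$ for $\calG\in\sfD^\perf(T)$ produces another generator; compatibility with $\bfL\mfh_T^*$ is the special case $\calF=\calO_T$, and uniqueness then follows from compatibility together with the generation step. I expect the main obstacle to be the chain of manipulations in the semiorthogonality step: it relies on the projection formula and the faithful base change being valid in the noncommutative square, both of which ultimately rest on the strictness of $\mfh_T$ from \Cref{lem:noncommutative-fibre-product} (so that $\order{A}_T=\mfh_T^*\order{A}$ and $\bfL\mfh_T^*$ is a genuine pullback), and on the passage from $\sfD^\perf$ to the ambient $\sfD(X,\order{A})$ where \Cref{lem:rel-criterion-semiorthogonality} is directly applicable.
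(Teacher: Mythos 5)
Your proposal is correct and follows essentially the same route as the paper: generation via \Cref{lem:base-change-perfect-category}, semiorthogonality via \Cref{lem:rel-criterion-semiorthogonality}, and $T$-linearity and compatibility with $\bfL\mfh_T^*$ read off from the construction of the components $\sfA_{iT}^\perf$. The only difference is that you spell out the projection-formula and faithful-base-change manipulations that the paper dismisses as ``standard'', which is a harmless (and welcome) elaboration.
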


\begin{proof}
    Using \Cref{lem:rel-criterion-semiorthogonality},
    it follows from a standard manipulations
    that~$\sfA_{jT}^\perf \subset {\sfA_{iT}^\perf}^\perp$.
    The construction of~$\sfA_{iT}^\perf$ implies~$T$-linearity and~$\bfL\mfh_T^*(\sfA_i^\perf)\subseteq \sfA_{iT}^\perf$. 
    Generation follows \Cref{lem:base-change-perfect-category}.
\end{proof}

\paragraph{The unbounded derived category.}
We extend \cite[Proposition 5.3]{MR2801403} to our setting.
\begin{proposition}\label{prop:unbounded-base-change}
    Let~$\sfD(X,\order{A}) = \langle \widehat{\sfA}_1,\ldots, \widehat{\sfA}_m\rangle$ be an~$S$-linear semiorthogonal decomposition.
    Then
    \begin{equation}\label{eq:unbounded-base-change}
        \sfD(X_T,\order{A}_T) = \langle \widehat{\sfA}_{1,T},\ldots, \widehat{\sfA}_{m,T}\rangle
    \end{equation}
    is a~$T$-linear semiorthogonal decomposition compatible with~$\bfL \mfh_{T,*}$ and~$\bfL \mfh_T^*$.
    Moreover, if the decomposition is induced from a semiorthogonal decomposition \eqref{eq:sod-perfect-S-linear} on~$\sfD^\perf(X,\order{A})$,
    the semiorthogonal decomposition \eqref{eq:unbounded-base-change} is compatible with the one constructed in \Cref{prop:base-change-perfect}.
\end{proposition}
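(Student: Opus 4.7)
The plan is to adapt Kuznetsov's proof of \cite[Proposition 5.3]{MR2801403} to the noncommutative setting, invoking the relative semiorthogonality criterion (\Cref{lem:rel-criterion-semiorthogonality}), faithful base change for $h$, perfect base change (\Cref{prop:base-change-perfect}), and compact generation of $\sfD(X_T,\order{A}_T)$ by perfect complexes (\Cref{rem:perfect-complexes-and-compact-objects}).

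The first step is to reduce to the perfect case. Setting $\sfA_i^\perf\colonequals\widehat{\sfA}_i\cap\sfD^\perf(X,\order{A})$, one checks that each cocomplete component $\widehat{\sfA}_i$ is compactly generated with compacts $\sfA_i^\perf$, and that $\sfD^\perf(X,\order{A})=\langle\sfA_1^\perf,\ldots,\sfA_m^\perf\rangle$ is an $S$-linear semiorthogonal decomposition. Applying \Cref{prop:base-change-perfect} yields a $T$-linear SOD $\sfD^\perf(X_T,\order{A}_T)=\langle\sfA_{1,T}^\perf,\ldots,\sfA_{m,T}^\perf\rangle$; \Cref{lem:induced-sod-perfect-to-unbounded} then lifts this to the required $T$-linear SOD \eqref{eq:unbounded-base-change}, with $\widehat{\sfA}_{i,T}$ defined as the smallest cocomplete triangulated subcategory of $\sfD(X_T,\order{A}_T)$ containing $\sfA_{i,T}^\perf$. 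The ``moreover'' clause is then tautological from this construction.

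Next I would verify the compatibilities. For $\bfL\mfh_T^*$: any $M\in\widehat{\sfA}_i$ is a homotopy colimit of compacts in $\sfA_i^\perf$, and since $\bfL\mfh_T^*$ commutes with homotopy colimits and sends $\sfA_i^\perf$ into $\sfA_{i,T}^\perf\subset\widehat{\sfA}_{i,T}$, cocompleteness forces $\bfL\mfh_T^* M\in\widehat{\sfA}_{i,T}$. For $\bfR\mfh_{T,*}$: given $N\in\widehat{\sfA}_{i,T}$, to show $\bfR\mfh_{T,*}N\in\widehat{\sfA}_i$ I would verify both semiorthogonality conditions. For $M\in\widehat{\sfA}_k$ with $k>i$, the adjunction $\bfL\mfh_T^*\dashv\bfR\mfh_{T,*}$ combined with the pullback compatibility gives $\bfR\Hom(M,\bfR\mfh_{T,*}N)\cong\bfR\Hom(\bfL\mfh_T^*M,N)=0$ via the downstairs semiorthogonality of $\widehat{\sfA}_{k,T}$ against $\widehat{\sfA}_{i,T}$. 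For $k<i$, I would test against compact $M\in\sfA_k^\perf$ and apply \Cref{lem:rel-criterion-semiorthogonality}: the push-pull identities relating $\bfR f_*\bfR\inthom_{\order{A}}$ and $\bfR f_{T,*}\bfR\inthom_{\order{A}_T}$ through $\bfR h_*$, available once $M$ is perfect, reduce the vanishing $\bfR f_*\bfR\inthom_{\order{A}}(\bfR\mfh_{T,*}N,M)=0$ to the downstairs semiorthogonality $\widehat{\sfA}_{k,T}\subset\widehat{\sfA}_{i,T}^\perp$, which holds by hypothesis.

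The main obstacle is the very first reduction step, namely showing that the cocomplete SOD of $\sfD(X,\order{A})$ restricts cleanly to $\sfD^\perf(X,\order{A})$; this requires that each $\widehat{\sfA}_i$ be compactly generated with compacts $\widehat{\sfA}_i\cap\sfD^\perf(X,\order{A})$. This is a Neeman-style consequence of compact generation of $\sfD(X,\order{A})$ together with the fact that projection functors of a cocomplete SOD preserve arbitrary direct sums; it can be carried out by adapting \cite[Proposition 4.1]{MR2801403} with the help of \Cref{lem:perfect-is-homologically-finite}. Alternatively, one can bypass the difficulty altogether by restricting to the case where the SOD on $\sfD(X,\order{A})$ is already induced from one on $\sfD^\perf(X,\order{A})$, which is the only situation required for \Cref{thm:main-base-change-theorem-intro}.
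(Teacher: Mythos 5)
Your scaffolding coincides with the paper's up to the last step: the components downstairs are defined at the perfect level via \Cref{prop:base-change-perfect} and lifted with \Cref{lem:induced-sod-perfect-to-unbounded}, the ``moreover'' clause is by construction, and your pullback compatibility is exactly the paper's argument ($\bfL\mfh_T^*$ preserves direct sums and cones and sends $\sfA_i^\perf$ into $\sfA_{iT}^\perf$). Your worry about restricting an arbitrary unbounded semiorthogonal decomposition to $\sfD^\perf(X,\order{A})$ is legitimate, and your proposed bypass (work only with decompositions induced from the perfect level) is in effect what the paper does, since its proof reads off $\widehat{\sfA}_{iT}$ from the construction in \Cref{lem:induced-sod-perfect-to-unbounded}.

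The genuine gap is in your pushforward step for the earlier components. To conclude $\bfR\mfh_{T,*}N\in\widehat{\sfA}_i$ you must show $\Hom(\bfR\mfh_{T,*}N,M')=0$ for \emph{all} $M'\in\widehat{\sfA}_k$ with $k<i$, but you only test against perfect $M\in\sfA_k^\perf$. Since $\widehat{\sfA}_k$ is generated from $\sfA_k^\perf$ by arbitrary direct sums and cones and $\bfR\mfh_{T,*}N$ is not compact, vanishing against these generators does not propagate to the whole component. Moreover, the ``push-pull identity'' you invoke for $\bfR f_*\bfR\inthom_{\order{A}}(\bfR\mfh_{T,*}N,M)$ has the pushforward in the \emph{first} argument, so it is not an instance of the adjunction $\bfL\mfh_T^*\dashv\bfR\mfh_{T,*}$ but of Grothendieck duality (it would require a compatibility for $\mfh_T^!$), which is neither available in the paper nor supplied by you; perfectness of $M$ alone does not produce it. The paper avoids orthogonality tests entirely: by \Cref{lem:base-change-perfect-category} it suffices to push forward the generators $\bfL\mfh_T^*M\otimes^{\bfL}_{X_T}\bfL f_T^*\calF$ with $M\in\sfA_i^\perf$ and $\calF\in\sfD^\perf(T)$; the noncommutative projection formula gives $\bfR\mfh_{T,*}(\bfL\mfh_T^*M\otimes^{\bfL}\bfL\mff_T^*\calF)\cong M\otimes^{\bfL}_{\order{A}}\bfR h_{T,*}\bfL f_T^*\calF$, faithfulness of $h$ for $\mff$ rewrites the second factor as $\bfL f^*\bfR h_*\calF$, and extended $S$-linearity (\Cref{rem:extended-S-linearity}) places the result in $\widehat{\sfA}_i$; one then extends to all of $\widehat{\sfA}_{iT}$ because $\bfR\mfh_{T,*}$ commutes with the direct sums and cones used in the construction. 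Tellingly, your route never uses the faithfulness hypothesis, which is precisely the ingredient making this computation work; your treatment of the later components ($k>i$) via adjunction is fine as stated.
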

\begin{proof}
    Recall from the proof of \Cref{lem:induced-sod-perfect-to-unbounded}
    that by construction~$\widehat{A}_{iT}$ is obtained
    as the closure of~$\sfA_{iT}^\perf$ in~$\sfD(X_T,\order{A}_T)$
    under direct sums and iterated cones.
    In particular, it contains all homotopy colimits
    of perfect complexes in~$\sfA_{iT}^\perf$.
    The compatibility with the pullback follows now directly from~$\bfL \mfh_T^* \sfA_{i}^\perf\subset \sfA_{iT}^\perf \subset \widehat{\sfA}_{iT}$,
    where the first inclusion follows from the compatible base change in \Cref{prop:base-change-perfect}.
    
    Next consider the pushforward. By \Cref{lem:base-change-perfect-category}, we can consider~$M \in \sfA_{i}^\perf$ and~$\calF \in \sfD^\perf(T)$.
    It is to show that~$\bfR \mfh_{T,*} (\bfL \mfh_T^* M \otimes_{X_T}^\bfL \bfL f_T^* \calF) \in \widehat{\sfA}_i$. 
    Because~$\bfL \mfh_T^* M \otimes_{X_T}^\bfL \bfL f_T^* \calF\cong \bfL\mfh_T^*M\otimes_{\order{A}_T}^\bfL \bfL \mff_T^*\calF$
    in $\bd(X_T,\order{A}_T)$, we can use the noncommutative projection formula \cite[Proposition A.6]{MR4554471} to obtain
    \begin{equation}
        \bfR \mfh_{T,*} (\bfL\mfh_T^*M\otimes_{\order{A}_T}^\bfL \bfL \mff_T^*\calF)
        =  M \otimes_{\order{A}}^\bfL (\bfR \mfh_{T,*}\bfL f_T^* \calF).
    \end{equation}
    The right hand side lies in $\widehat{A}_i$
    as~$\bfR h_{T,*}\bfL f_T^* \calF \cong \bfL f^*\bfR h_* \calF \in \sfD(X)$,
    because~$h$ is faithful for~$\mff$,
    and~$\widehat{\sfA}_i$ is~$S$-linear.
    Note that by \Cref{rem:extended-S-linearity},
    each component is~$S$-linear for pullbacks 
    of all quasi-coherent sheaves~$\calF \in \sfD(S)$.
\end{proof}
Similarly to \cite[Proposition 5.3]{MR2801403},
we can give a more precise description of
the components~$\widehat{\sfA}_{iT}$.
For~$M \in \sfD(X_T,\order{A}_T)$ we find
that~$M \in \widehat{\sfA}_{iT}$
if and only if $\bfR\mfh_{T,*} (M\otimes_{T}^{\bfL} \bfL f_T^*\calF) \in \widehat{\sfA}_i$
for all~$\calF \in \sfD^\perf(T)$.
With statement (ii) of the next lemma this can be shown as in \cite[Proposition 5.3]{MR2801403}.

\begin{lemma}\label{lem:eventually-bounded-zero-sufficient}
    Let~$\mff \colon (X,\order{A})\to (Y,\order{B})$ be a 
    morphism of coherent ringed schemes, 
    such that~$f$ is quasi-projective~$M \in \sfD(X,\order{A})$
    and~$\calO_{f}(1)$ an~$f$-ample line bundle on~$X$.
    \begin{enumerate}[label = \roman*)]
        \item Then~$M \in \sfD^{[p,q]}(X,\order{A})$ if and only if there is a sequence~$\calO_{f}(k_1)\to \calO_{f}(k_2)\to\ldots~$ with~$k_i\to \infty$
        such that~$\hocolim \bfR \mff_*(M \otimes_X \calO_{f}(k_i))\in \sfD^{[p,q]}(Y,\order{B})$.
        \item Moreover,~$M = 0$ if and only if there is a sequence~$\calO_{f}(k_1)\to \calO_{f}(k_2)\to\ldots~$ with~$k_i\to \infty$
        such that~$\hocolim \bfR \mff_*(M \otimes_X \calO_{f}(k_i)) = 0$.
    \end{enumerate}
\end{lemma}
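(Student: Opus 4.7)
The plan is to reduce both assertions to the commutative case for the underlying quasi-projective morphism $f\colon X\to Y$ equipped with the $f$-ample line bundle $\calO_f(1)$. As noted in the remark after \eqref{eq:coherent-cohomology-and-bounded-derived-category}, the pushforward along the structure morphism $(X,\order{A})\to X$ is exact, and similarly for $(Y,\order{B})\to Y$. Consequently $\calH^i(N)$ in $\QCoh(X,\order{A})$ equals the corresponding cohomology sheaf in $\QCoh(X)$, and $\bfR\mff_*$ has the same underlying complex of quasi-coherent $\calO_Y$-modules as $\bfR f_*$, with the $\order{B}$-structure reassembled afterwards. Tensoring with the line bundle $\calO_f(k)$ is exact and respects the forgetful functor, and the hocolim in $\sfD(Y,\order{B})$ is computed by the hocolim in $\sfD(Y)$ with the $\order{B}$-action propagated through. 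Hence both membership in $\sfD^{[p,q]}$ and vanishing of the hocolim can be checked on the underlying quasi-coherent complexes on $Y$.

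For the commutative statement of (i), the forward direction rests on Serre vanishing. Writing $\calH^j(M)=\varinjlim_\alpha \calF_\alpha^j$ as a filtered colimit of coherent subsheaves and choosing $k_i\to\infty$ fast enough that $R^p f_*(\calF_\alpha^j\otimes\calO_f(k_i))=0$ for $p>0$ and for all subsheaves $\calF_\alpha^j$ relevant at stage $i$, the hypercohomology spectral sequence shows that $\bfR f_*(M\otimes\calO_f(k_i))$ eventually lies in $\sfD^{[p,q]}(Y)$; the hocolim of a sequence of complexes concentrated in a fixed cohomological range stays in that range since filtered colimits of quasi-coherent sheaves are exact. The converse is proved contrapositively: if some $\calH^j(M)\neq 0$ has $j\notin[p,q]$, the $f$-ampleness of $\calO_f(1)$ produces, for $k\gg 0$ and a suitable coherent subsheaf $\calF\subseteq\calH^j(M)$, a non-zero $f_*(\calF\otimes\calO_f(k))$; after choosing the sequence of transition maps compatibly, filtered colimits yield non-vanishing cohomology in the hocolim, contradicting the assumption. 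Part (ii) follows from (i) by letting $[p,q]$ range over arbitrary cohomological windows, the forward direction being tautological once one observes that $M\otimes_X\calO_f(k)=0$ whenever $M=0$.

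The main difficulty lies in the converse of (i): Serre vanishing in its classical form applies only to coherent sheaves, so one must manage the interplay between the filtered decomposition $\calH^j(M)=\varinjlim\calF_\alpha^j$, the Castelnuovo--Mumford regularity of $\calF_\alpha^j$ growing with $\alpha$, and the specific sequence $k_i\to\infty$ appearing in the hocolim. The conservativity of the family $\{\bfR f_*(-\otimes\calO_f(k))\}_{k\to\infty}$ on $\sfD(X)$ is the essential input, but transferring this conservativity through the hocolim requires verifying that the induced transition maps $\bfR f_*(M\otimes\calO_f(k_i))\to\bfR f_*(M\otimes\calO_f(k_{i+1}))$ retain enough cohomological information to detect non-vanishing in the limit.
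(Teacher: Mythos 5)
Your first paragraph is exactly the paper's proof: since the cohomology sheaves of an $\order{A}$-module complex do not depend on the $\order{A}$-structure and $\bfR\mff_*=\bfR f_*$ on underlying complexes (the forgetful functors are exact), both conditions can be tested on the underlying quasi-coherent complexes, and the paper then simply concludes by citing the commutative statement \cite[Lemma 5.4]{MR2801403}. Had you stopped there and invoked that lemma, your argument would coincide with the paper's.

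The gap is that you instead try to reprove the commutative lemma, and that part does not go through as written. In the forward direction, your claim that $\bfR f_*(M\otimes\calO_f(k_i))$ ``eventually lies in $\sfD^{[p,q]}(Y)$'' is false in general: the cohomology sheaves $\calH^j(M)$ are only quasi-coherent, and Serre vanishing fails for them at every fixed twist (already $\bigoplus_{n\ge 0}\calO_{\bbP^1}(-n)$ has nonzero $H^1$ after any twist), so no choice of $k_i$ makes the individual terms bounded in the right range; at best one can say something about the hocolim, and to do that one must commute $\bfR f_*$ past the homotopy colimit and interchange the colimit over $i$ with the colimit over coherent subsheaves, a step your ``fast enough at stage $i$'' phrasing does not supply. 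More seriously, the converse direction, which is the entire content of the statement (the reverse implications of both (i) and (ii)), is only named, not proven: appealing to ``conservativity of the family $\{\bfR f_*(-\otimes\calO_f(k))\}$'' does not address the fact that the hypothesis concerns a single homotopy colimit along given transition maps $\calO_f(k_i)\to\calO_f(k_{i+1})$, and your own closing paragraph concedes that transferring conservativity through these transition maps is unresolved. That transfer is precisely what Kuznetsov's Lemma 5.4 establishes, so you should either cite it after your reduction (as the paper does) or give a genuine argument for it; as it stands the proposal proves only the easy reduction and the (repairable) forward direction.
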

\begin{proof}
    Given a quasi-coherent~$\order{A}$-module~$M$,
    we have~$M \in \sfD^{[p,q]}(X,\order{A})$ 
    if and only if~$M \in \sfD^{[p,q]}(X)$,
    because the cohomology sheaf of~$M$
    does not depend on its~$\order{A}$-module structure.
    By \cite[Lemma 5.4]{MR2801403}, this holds
    if and only if~$\hocolim \bfR f_*(M \otimes_X \calO_{f}(k_i))\in \sfD^{[p,q]}(Y)$.
    Since~$\bfR \mff_* = \bfR f_*$, we find in particular
    that~$\hocolim \bfR \mff_*(M \otimes_X \calO_{f}(k_i))\in \sfD^{[p,q]}(Y,\order{B})$.
    Similarly, Lemma 5.4 of \emph{op.~cit.} implies (ii).
\end{proof}

\begin{remark}
    With the assumption that the projection functors in the semiorthogonal decompositions of \Cref{prop:base-change-perfect} have finite cohomological amplitude
    one obtains a semiorthogonal decomposition of the bounded above derived category~$\sfD^-(T,\order{A}_T) = \langle \sfA_{1T}^-,\ldots, \sfA_{mT}^-\rangle$
    compatible with derived pushforward and pullback.
    The~$T$-linear components are~$\sfA_{iT}^- = \widehat{\sfA}_{iT} \cap \sfD^{-}(T,\order{A}_T)$.  
\end{remark}

\paragraph{The bounded derived category.}
We are now ready to formulate the main statement of this section,
the generalization of \cite[Theorem 5.6]{MR2801403} to
coherent ringed schemes. 
\begin{theorem}\label{thm:main-base-change-theorem}
    Let~$\mff\colon (X,\order{A})\to S$ and~$h\colon T\to S$ be morphisms of coherent ringed schemes.
    Assume that~$\bd(X,\order{A}) = \langle\sfA_1,\ldots, \sfA_m \rangle$
    is an~$S$-linear strong semiorthogonal decomposition
    such that the projection functors have finite cohomological amplitude
    and~$h$ is faithful for~$\mff$.
    If~$\order{A}$ has finite global dimension, then
    \begin{equation}
        \bd(X_T,\order{A}_T) = \langle \sfA_{1,T},\ldots, \sfA_{m,T}\rangle
    \end{equation}
    is a~$T$-linear semiorthogonal decomposition compatible with 
    \begin{itemize}
        \item the induced semiorthogonal decomposition on~$\sfD(X_T,\order{A}_T)$ and~$\sfD^{-}(X_T,\order{A}_T)$,
        \item the pullback~$\bfL\mfh_T^*\colon \bd(S,\order{A})\to \sfD^-(X_T,\order{A}_T)$
        and the pushforward~$\bfR\mfh_{T,*}\colon \bd(X_T,\order{A}_T)\to \sfD(S,\order{A})$.
    \end{itemize}
\end{theorem}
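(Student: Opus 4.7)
The plan is to assemble the decomposition on $\bd(X_T,\order{A}_T)$ by climbing the tower of categories
$$\sfD^\perf \subset \bd \subset \sfD^{-} \subset \sfD,$$
using the induction lemmas already established. Since $\order{A}$ has finite global dimension, Lemma \ref{lem:D-perf-globally} identifies $\bd(X,\order{A}) = \sfD^\perf(X,\order{A})$, so the hypothesis gives a strong $S$-linear semiorthogonal decomposition of $\sfD^\perf(X,\order{A})$. Proposition \ref{prop:base-change-perfect} then produces a $T$-linear decomposition $\sfD^\perf(X_T,\order{A}_T) = \langle \sfA_{1T}^\perf,\ldots,\sfA_{mT}^\perf\rangle$ compatible with $\bfL\mfh_T^{*}$. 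Proposition \ref{prop:unbounded-base-change} extends this to a decomposition $\sfD(X_T,\order{A}_T) = \langle \widehat{\sfA}_{1T},\ldots,\widehat{\sfA}_{mT}\rangle$ compatible with both $\bfL\mfh_T^{*}$ and $\bfR\mfh_{T,*}$, using that $h$ is faithful for $\mff$. Since the projection functors $\text{pr}_i^\perf$ have finite cohomological amplitude by Lemma \ref{lem:finite-cohomological-amplitude}, Lemma \ref{lem:induced-sod-unbounded-to-bounded-above} restricts the decomposition to $\sfD^{-}(X_T,\order{A}_T)$ with components $\sfA_{iT}^{-} = \widehat{\sfA}_{iT} \cap \sfD^{-}(X_T,\order{A}_T)$.

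Define $\sfA_{iT} \colonequals \widehat{\sfA}_{iT} \cap \bd(X_T,\order{A}_T)$. Semiorthogonality is inherited from the unbounded decomposition. The real task is to verify that the projection $\text{pr}_i^{T}\colon \sfD(X_T,\order{A}_T) \to \widehat{\sfA}_{iT}$ restricts to $\text{pr}_i^{T}\colon \bd(X_T,\order{A}_T) \to \sfA_{iT}$, i.e.\ that the pieces in the decomposition triangles \eqref{eq:split-triangles-from-sod} of an object $M\in \bd(X_T,\order{A}_T)$ live in $\bd$. By the restriction to $\sfD^{-}$, we already know $\text{pr}_i^{T}(M)\in \sfA_{iT}^{-}$; what remains is to show that its cohomology is bounded and coherent.

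The main obstacle will be the boundedness of $\text{pr}_i^{T}(M)$. My plan is to test this via pushforward in the spirit of Lemma \ref{lem:eventually-bounded-zero-sufficient}: fix an $\mff_T$-ample line bundle $\calO_{\mff_T}(1)$ on $X_T$ and study the homotopy colimit
$$\hocolim_{k}\ \bfR\mff_{T,*}\bigl(\text{pr}_i^{T}(M) \otimes_{X_T}^{\bfL} \calO_{\mff_T}(k)\bigr).$$
Since $\widehat{\sfA}_{iT}$ is $T$-linear, the projection commutes with tensoring by pullbacks from $T$; combined with the projection formula and the faithfulness of $h$ for $\mff$ (which ensures that $\text{pr}_i^{T}$ is compatible with $\bfR\mfh_{T,*}$ via the commutativity in Proposition \ref{prop:unbounded-base-change}), this reduces the computation to an analogous expression on $(X,\order{A})$ to which the assumed finite cohomological amplitude of $\text{pr}_i$ on $\bd(X,\order{A})$ applies. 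This will confine the output to a fixed cohomological range independent of $k$, and invoking Lemma \ref{lem:eventually-bounded-zero-sufficient}(i) on $\mff_T$ yields $\text{pr}_i^{T}(M) \in \sfD^{[p,q]}(X_T,\order{A}_T)$ for some $p\le q$. Coherence of cohomology of $\text{pr}_i^{T}(M)$ then follows by two-out-of-three applied to the decomposition triangles: the object $M$ has coherent cohomology and, proceeding by reverse induction on $i$ from $m$ down to $1$, so does the remainder $T_{\ell}$, hence each successive component $A_{\ell}$.

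Once these restrictions are in place, all remaining compatibilities are formal. Compatibility with the induced decompositions on $\sfD(X_T,\order{A}_T)$ and $\sfD^{-}(X_T,\order{A}_T)$ is built into the definition $\sfA_{iT}\subset \sfA_{iT}^{-} \subset \widehat{\sfA}_{iT}$. Compatibility of $\bfR\mfh_{T,*}\colon \bd(X_T,\order{A}_T) \to \sfD(S,\order{A})$ with the decomposition is inherited from Proposition \ref{prop:unbounded-base-change}. For the pullback, I would check that $\bfL\mfh_T^{*}$ takes $\bd(S,\order{A}) = \sfD^\perf(S,\order{A})$ into $\sfD^{-}(X_T,\order{A}_T)$ using locally projective resolutions provided by \cite[Proposition 3.7]{MR3695056}, after which the compatibility with the $\sfA_{iT}^{-}$-decomposition propagates from the already established compatibility of $\bfL\mfh_T^{*}$ on $\sfD^\perf$.
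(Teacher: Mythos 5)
Your overall architecture (climb the tower $\sfD^{\perf}\subset\bd\subset\sfD^{-}\subset\sfD$, set $\sfA_{iT}=\widehat{\sfA}_{iT}\cap\bd(X_T,\order{A}_T)$, and reduce everything to showing that the induced projections preserve boundedness and coherence) is the same as the paper's, which runs Kuznetsov's proof of \cite[Theorem 5.6]{MR2801403} through the noncommutative lemmas of the appendix. However, the two steps you actually spell out both have genuine gaps. For boundedness you twist $\pr_i^{T}(M)$ by an $\mff_T$-ample line bundle and push forward along $\mff_T$ to $T$; but the natural such line bundle is $h_T^{*}$ of an $f$-ample bundle on $X$, not an object of the form $\bfL f_T^{*}\calF$ with $\calF\in\sfD^{\perf}(T)$, so the $T$-linearity of $\widehat{\sfA}_{iT}$ does not let you move the twist past $\pr_i^{T}$ -- and the projections whose finite cohomological amplitude you must exploit live on $(X,\order{A})$, not on $T$. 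The test has to be run for the morphism $\mfh_T\colon (X_T,\order{A}_T)\to (X,\order{A})$ instead, twisting by $f_T^{*}\calO_h(k)$: these are $h_T$-ample (as in the proof of \Cref{lem:base-change-perfect-category}) and are pullbacks from $T$, so they do commute with $\pr_i^{T}$, and the compatibility of $\bfR\mfh_{T,*}$ from \Cref{prop:unbounded-base-change} turns the test object into $\widehat{\pr}_i\bigl(\bfR\mfh_{T,*}(M\otimes f_T^{*}\calO_h(k))\bigr)$, after which \Cref{lem:eventually-bounded-zero-sufficient} applies to $\mfh_T$. Even then you are not finished: $\bfR\mfh_{T,*}(M\otimes f_T^{*}\calO_h(k))$ is bounded quasi-coherent but not coherent, so the assumed amplitude bound for $\pr_i$ on $\bd(X,\order{A})$ does not apply to it directly; this is precisely the role of the approximation lemma at the end of the appendix (the analogue of \cite[Lemma 2.20]{MR2801403}, writing such a pushforward as a homotopy colimit of a direct system in $\sfD^{[p,q]}$), which your plan omits entirely.

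The coherence step is circular. In the decomposition triangles $T_\ell\to T_{\ell-1}\to A_\ell$ only $T_0=M$ is known to have coherent cohomology; the base of your reverse induction is $T_{m-1}\cong A_m=\pr_m^{T}(M)$, which is one of the objects whose coherence you are trying to prove, so two-out-of-three never has a second coherent vertex to start from. The way this is actually handled (in Kuznetsov, and implicitly in the paper) is via approximation by perfect complexes: for any $n$ choose, using locally projective resolutions \cite[Proposition 3.7]{MR3695056}, a perfect complex $P\to M$ whose cone lies in $\sfD^{\le -n}$; since $\pr_i^{T}(P)\in\sfA_{iT}^{\perf}$ is perfect and the induced projections have a uniform cohomological amplitude bound on bounded quasi-coherent complexes (the corrected boundedness step above), the cohomology of $\pr_i^{T}(M)$ in any fixed range agrees with that of a perfect complex and is therefore coherent.
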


The proof follows as in \cite[Theorem 5.6]{MR2801403}
with the following lemma for the approximation of the 
pushforward of bounded quasi-coherent complexes~$M \in \sfD^{[p,q]}(X,\order{A})$.
\begin{lemma}
    Let~$\mff \colon (X,\order{A})\to (Y,\order{B})$ be a 
    morphism of coherent ringed schemes, 
    such that~$f$ is quasi-projective~$M \in \sfD(X,\order{A})$
    and~$\calO_{f}(1)$ an~$f$-ample line bundle on~$X$.
    If~$M \in \sfD^{[p,q]}(X,\order{A})$ and~$k \gg 0$,
    then there is a direct system~$\{N_m\}$ in~$\sfD^{[p,q]}(Y,\order{B})$
    such that~$\bfR \mff_* (M\otimes_{X}\calO_f(k)) \cong \hocolim N_m$.
\end{lemma}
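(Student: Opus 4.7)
The plan is to reduce the statement to its commutative analogue, exploiting that $\bfR\mff_*$ agrees with $\bfR f_*$ on underlying quasi-coherent sheaves and carrying along the $\order{B}$-module structure by naturality. The key inputs are the Grothendieck abelian structure of $\QCoh(X,\order{A})$ with $\coh(X,\order{A})$ as a generating subcategory by \cite[Proposition 3.7]{MR3695056}, Serre vanishing for $f$-ample twists of coherent sheaves, and the fact that restriction of scalars along $f_\alg\colon f^*\order{B}\to\order{A}$ is exact.

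First, I would represent $M$ by an honest cochain complex $M^\bullet$ of quasi-coherent $\order{A}$-modules concentrated in degrees $[p,q]$ by truncating a K-injective resolution. Each component $M^i$ is a filtered colimit of its coherent $\order{A}$-submodules, and by choosing these compatibly across the differentials one obtains a cofinal filtered direct system $\{L_\alpha^\bullet\}$ of bounded coherent subcomplexes $L_\alpha^\bullet \subset M^\bullet$ concentrated in degrees $[p,q]$ with $\operatorname{colim}_\alpha L_\alpha^\bullet = M^\bullet$. Since tensoring with the line bundle $\calO_f(k)$ commutes with filtered colimits and $\bfR\mff_*$ commutes with filtered colimits of quasi-coherent sheaves on a noetherian quasi-projective scheme, one obtains $\bfR\mff_*(M \otimes_X \calO_f(k)) \cong \hocolim_\alpha \bfR\mff_*(L_\alpha^\bullet \otimes_X \calO_f(k))$, and the $\order{B}$-module structure on the colimit matches the one induced from $M$ by functoriality of $\mff_*$.

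For each bounded coherent subcomplex $L_\alpha^\bullet$, Serre vanishing for the $f$-ample line bundle $\calO_f$ yields a threshold $k_\alpha$ such that $\bfR\mff_*(L_\alpha^\bullet \otimes_X \calO_f(k))$ lies in $\sfD^{[p,q]}(Y,\order{B})$ for $k \geq k_\alpha$. Setting $N_\alpha \colonequals \bfR\mff_*(L_\alpha^\bullet \otimes_X \calO_f(k))$ and restricting to the cofinal subsystem of indices with $k_\alpha \leq k$ furnishes the desired direct system. The main obstacle is this last cofinality assertion: since enlarging a coherent subcomplex can in principle raise the Serre threshold, one must verify that a supply of coherent subcomplexes with threshold bounded by $k$ nonetheless remains cofinal inside $\{L_\alpha^\bullet\}$. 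The resolution, following the treatment in \cite[Theorem 5.6]{MR2801403}, is to choose the approximating system adaptively in $k$ (or equivalently to enlarge $k$ after fixing a cofinal generating family); the noncommutative setting adds nothing beyond tracking the $\order{B}$-module structure, which is automatic because restriction of scalars is exact and commutes with filtered colimits.
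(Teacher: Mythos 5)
Your overall route is the same as the paper's: its entire proof consists of the observation that every quasi-coherent $\order{A}$-module is the filtered colimit of its coherent $\order{A}$-submodules, after which the argument of \cite[Lemma 2.20]{MR2801403} is invoked, i.e.\ exactly the steps you spell out (represent~$M$ by a complex of quasi-coherent $\order{A}$-modules concentrated in degrees~$[p,q]$, exhaust it by coherent $\order{A}$-subcomplexes, commute $\bfR\mff_*$ and the twist with the filtered colimit, and apply Serre vanishing to each coherent piece).

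The step you flag yourself is, however, a genuine gap in the form you propose to close it. For a fixed~$k$ the coherent subcomplexes whose Serre threshold is at most~$k$ need not be cofinal, and no enlargement of~$k$ repairs this when~$M$ is genuinely quasi-coherent: take $f\colon \bbP^1\to \Spec\field$ with $\order{A} = \calO_{\bbP^1}$, $\order{B} = \field$, and $M = \bigoplus_{n\ge 0}\calO_{\bbP^1}(-n)$ placed in degree~$0$. For every~$k$ one has $\tH^1(\bbP^1, M\otimes\calO(k))\neq 0$, while cohomology commutes with filtered (homotopy) colimits, so $\bfR f_*(M\otimes\calO(k))$ cannot be a homotopy colimit of objects of $\sfD^{[0,0]}(\Spec\field)$; in particular the subfamily of coherent subcomplexes with threshold~$\le k$ is not cofinal for any~$k$, so ``enlarging $k$ after fixing a cofinal family'' cannot work. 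The mechanism that does work --- and the one the application in \Cref{thm:main-base-change-theorem} actually requires, since the criterion of \Cref{lem:eventually-bounded-zero-sufficient} is formulated for a sequence of twists $k_i\to\infty$ --- is your ``adaptive'' variant taken seriously: choose for each coherent subcomplex~$L_m$ a twist $k_m$ at least its Serre threshold, with $k_m\to\infty$, and set $N_m = \bfR\mff_*(L_m\otimes\calO_f(k_m))$, with transition maps induced by the inclusions $L_m\subset L_{m+1}$ and by sections of $\calO_f(k_{m+1}-k_m)$; each $N_m$ then lies in $\sfD^{[p,q]}(Y,\order{B})$ and the homotopy colimit computes the object entering \Cref{lem:eventually-bounded-zero-sufficient}. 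With a single fixed twist, as in your cofinality claim (and, to be fair, as in the literal wording of the statement), the assertion fails, so you should either prove and use the lemma in this twist-growing form, or assume~$M$ has coherent cohomology, in which case Serre vanishing applies uniformly and no direct system is needed at all.
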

Since every quasi-coherent~$\order{A}$-module is a colimit of its coherent submodules,
one can use the same argument as in \cite[Lemma 2.20]{MR2801403} to prove this lemma.

\renewcommand*{\bibfont}{\small}
\printbibliography

\emph{Thilo Baumann}, \url{thilo.baumann@uni.lu} \\
Department of Mathematics, Universit\'e de Luxembourg, 6, avenue de la Fonte, L-4364 Esch-sur-Alzette, Luxembourg
\end{document}